\numberwithin{equation}{section}
\newtheorem{thm}{Theorem}[section]
\newtheorem*{thm*}{Theorem}
\newtheorem{lem}[thm]{Lemma}
\newtheorem{prop}[thm]{Proposition}
\theoremstyle{definition}
\newtheorem{rem}[thm]{Remark}
\newtheorem{dfn}[thm]{Definition}
\newtheorem{rmk}[thm]{Remark}
\newcommand{\N}{\mathds{N}}
\newcommand{\Z}{\mathds{Z}}
\newcommand{\R}{\mathds{R}}
\newcommand{\pa}{\partial}
\newcommand{\C}{\mathds{C}}
\newcommand{\T}{\mathds{T}}
\newcommand{\diff}{\mathrm{d}}
\newcommand{\be}{\begin{equation}}
\newcommand{\ee}{\end{equation}}
\newcommand\reallywidehat[1]{%
\savestack{\tmpbox}{\stretchto{%
  \scaleto{%
    \scalerel*[\widthof{\ensuremath{#1}}]{\kern-.6pt\bigwedge\kern-.6pt}%
    {\rule[-\textheight/2]{1ex}{\textheight}}
  }{\textheight}%
}{0.5ex}}%
\stackon[1pt]{#1}{\tmpbox}%
}
\begin{document}

\title[Zoll magnetic systems]{Zoll magnetic systems on the two-torus: \\ 
a Nash--Moser construction }


\author[L. Asselle]{Luca Asselle}
\address{Luca Asselle\newline\indent
Ruhr-Universit\"at Bochum, Fakult\"at f\"ur Mathematik, Universit\"atsstra\ss e 150, 44801 Bochum}
\email{luca.asselle@rub.de}

\author[G. Benedetti]{Gabriele Benedetti}
\address{Gabriele Benedetti\newline\indent 
Vrije Universiteit Amsterdam, Department of Mathematics, De Boelelaan 1111, 1081 HV Amsterdam}
\email{g.benedetti@vu.nl}
\author[M. Berti]{Massimiliano Berti}
\address{Massimiliano Berti\newline\indent 
	SISSA, Via Bonomea 265, 34136, Trieste, Italy.}
\email{berti@sissa.it}
\keywords{Hamiltonian systems, Zoll flows, magnetic geodesics, Nash--Moser implicit function theorem, Fourier integral operators, Bessel functions}

\begin{abstract}
We construct an infinite-dimensional family of smooth 
integrable {\it magnetic} systems on the two-torus which 
are {\it Zoll}, meaning that all the unit-speed
magnetic geodesics are periodic. 
The metric and the magnetic field of such systems are arbitrarily close to the flat metric and to a given constant magnetic field.
This extends to the magnetic setting a famous result by Guillemin \cite{Guillemin:1976} on the two-sphere. We characterize Zoll magnetic systems as zeros of a suitable action functional $S$,
and then look for its zeros  by means of a Nash--Moser implicit function theorem. 
This requires  showing the right-invertibility of the linearized operator $\diff S$ in a
neighborhood of the flat metric and constant magnetic field, 
and  establishing tame estimates for the right inverse.  
As  key step  we prove the invertibility  of the normal
operator $\diff S\circ \diff S^*$ which,  
unlike in Guillemin's case, is 
pseudo-differential only at the highest order.  
We  overcome this difficulty
noting that, by 
the asymptotic 
properties of Bessel functions,  the lower order expansion of 
 $ \diff S \circ \diff S^*$ is a  sum of 
Fourier integral operators.  
We then use
a resolvent identity decomposition which reduces  the problem to the invertibility of
$\diff S \circ \diff S^*$  restricted to the subspace of functions 
 corresponding to high Fourier modes. The inversion of  such a restricted operator
is finally achieved 
by making the crucial observation 
 that lower order  Fourier integral operators 
satisfy {\it asymmetric} tame estimates. 
\end{abstract}

\maketitle

\vspace{-5mm}

\tableofcontents

\vspace{-8mm}

\section{Introduction and main result}

A \textit{magnetic system} on a closed oriented surface $\Sigma$ is a pair $ (g,f) $ where $ g $ is a Riemannian metric on $\Sigma$
and $ f : \Sigma \to \R $ is a smooth function, called the \textit{magnetic function}. A \textit{magnetic geodesic} is a unit-speed curve $\gamma\colon\R\to \Sigma$ 
satisfying the equation
\begin{equation}\label{e:maggeo}
\nabla_t \dot \gamma = -f(\gamma)\, \jmath \dot\gamma \, ,
\end{equation}
where $\nabla_t$ denotes the Levi-Civita covariant derivative of $g$ and $\jmath\colon T\Sigma\to T\Sigma$ is the  counter clock-wise rotation by ninety-degree. From a physical point of view, magnetic geodesics model the motion of a charged particle in $\Sigma$ under the effect of the magnetic field $ -f\mu$, where $\mu$ is the area form on $\Sigma$. From a geometric point of view, magnetic geodesics are curves 
with {\it prescribed} 
geodesic curvature 
$$
\kappa_\gamma=-f(\gamma) \, . 
$$
The goal of the present paper is to construct an infinite-dimensional space of 
smooth magnetic systems whose dynamics is as simple as possible.
\begin{dfn} {\bf (Zoll magnetic system)}
A magnetic system $(g,f)$ is called \textit{Zoll} if, up to a smooth time reparametrization, all of its magnetic geodesics are periodic with the same 
minimal period. 
\end{dfn}

We notice that, if $\Sigma $ is not the two-sphere $ {\mathbb S}^2$, a system is Zoll if and only if all of its magnetic geodesics are periodic, see \cite{Asselle:2020b}. 
In contrast, on $ {\mathbb S}^2$ this condition is not sufficient as the Katok examples show \cite{Ben}.

Zoll magnetic systems generalize the classical notion of Zoll Riemannian metrics, namely those metrics such that all geodesics are periodic and with the same length. 
For topological reasons, the only oriented surface that can carry Zoll metrics is the two-sphere ${\mathbb S}^2$, see \cite{Besse}. 
The first explicit examples of 
Zoll metrics on $\mathbb{S}^2$ different from the round metric $g_0$ have been found by Zoll in \cite{Zoll:1903} within the class of metrics of revolution. Based on the work of Funk \cite{Funk:1913}, 
who constructed formal power series expansions of Zoll metrics close to  $ g_0 $, the breakthrough in the study of Zoll metrics 
is due to Guillemin in \cite{Guillemin:1976} 
via an implicit function theorem approach \`a la Nash--Moser: given a smooth function $u\colon {\mathbb S}^2\to\R$, there is a one-parameter
 family of Zoll metrics on ${\mathbb S}^2 $, 
 \[
 g(\tau):=(1+\tau u+o(\tau))g_0 \, , \qquad |\tau| \ll 1 \, , 
 \] 
 with fixed total area for any $\tau$, 
 if and only if $u$ is an odd function. 
Indeed,  the function $ u $ has to belong to the kernel of the Radon transform,  defined as the integral of a function on all the oriented closed geodesics of $ g_0 $, which are the standard circles.
A far-reaching
generalization of Guillemin's result to higher-dimensional spheres 
 $ {\mathbb S}^n $ has recently been 
 obtained by Ambrozio--Marquez--Neves in
 \cite{Ambrozio:2021},  where 
 metrics  admitting Zoll families of minimal hypersurfaces are constructed, 
 exploiting  properties of the higher-dimensional Radon transform.
We also mention that  a constructive approach to Zoll metrics using complex geometry
has been developed in \cite{LBM}, and we quote   \cite{MS22}
 for a variational characterization of Zoll metrics.

Zoll metrics play a crucial role in systolic geometry, which estimates the length of the \textit{systole} of a Riemannian manifold (that is, the length of the shortest non-constant closed geodesics) in terms of global geometric invariants, see e.g., the monograph \cite{Katz}. Indeed, Zoll metrics on $\mathbb S^2$ are local maximizers of the systolic ratio, namely the ratio between the square of the systole and the total area, see \cite{ABHS,ABHS21}. 
 Similarly, Zoll magnetic systems are particularly important 
 being local maximizers of the magnetic systolic ratio \cite{BKmag}, in which the length is replaced by the magnetic length and the total area by a suitable global invariant involving the Euler characteristic of the surface, the total area and the integral of the magnetic function. Both results follow from
 recent breakthroughs \cite{APB,ABcon,BKodd} on the study of local systolic inequalities in contact and symplectic geometry. 
 
 In contrast to Zoll metrics, {\it no} examples of Zoll magnetic systems are known so far besides:
 \begin{enumerate}
 \item 
 the trivial ones where the Gaussian curvature $K$ and the magnetic field $f$ are constant functions satisfying $K+f^2>0$, so that the magnetic geodesics are 
geodesic circles (such 
trivial  examples are actually available on any orientable surface); 
\item the few explicit 
 \textit{integrable} examples on a flat two-torus exhibited in \cite{Asselle:2020}. 
 \end{enumerate}
 The goal of the present work is to construct an abundance of smooth 
 integrable Zoll magnetic systems on the two-torus, where the metric is {\it not} flat 
and the magnetic field is {\it not} constant.
We now introduce the 
class of integrable magnetic systems and  we state precisely our main result, which is Theorem \ref{thm:main}.

{\bf Integrable magnetic systems on the two-torus.} Let $\T:=\R/2\pi\Z$ and 
consider magnetic systems  on the infinite cylinder $ \T\times\R$ 
with coordinates $ (x, y) $ 
given by
\begin{equation}\label{magsys1}
g = \diff x^2 + A^2(x) \, \diff y^2, \qquad f=f(x) \, ,
\end{equation}
where $A\colon \T\to (0,+\infty)$ and $f\colon\T\to\R$ are smooth functions. 
If $ (g,f) $ is Zoll then $\int_\T A(x)f(x)\, \diff x  \neq 0$ by \cite{Asselle:2020} 
(in particular the magnetic function $f$ has to be non-zero).
Thus, up to a dilation $(x,y)\mapsto (x,cy)$ which changes $A$ to $c^2A$ and $f$ to $\mathrm{sign}(c)f$ we can assume the normalization 
\begin{equation}
\frac{1}{2\pi}\int_\T A(x)f(x)\, \diff x = 1 \, . 
\label{constraint1}
\end{equation}
We identify  the quotient of $\T\times\R$ by the translation $(x,y)\mapsto (x,y+2\pi)$
with the two-torus. 
Since the metric $g$ and the magnetic function $f$ in \eqref{magsys1} are invariant under all vertical translations 
they yield an ``integrable" magnetic system on the two-torus. It is convenient to describe the magnetic system $(g,f)$ through the pair $(A,B)$, where $A$ is in \eqref{magsys1} and $B\colon\T\to\T$ is any smooth map such that 
\[
B' (x) = A(x)f (x)   \qquad  {\rm where} \qquad  ' := \frac{\diff}{\diff x }	\,  .
\]
By \eqref{constraint1} we see that $B $ is a well-defined map  on $ \T $  and
is determined by $(g,f)$ only up to an additive constant. 

Note that, for any  constant $A_*>0 $ the pair $ (A_*,  {\rm id}_\T ) $, 
where ${\rm id}_\T   \colon\T\to\T $ 
 denotes 
 the identity map on the torus $ \T $, 
 is a trivial Zoll pair, where the metric is flat and the magnetic function is constant and equal to $1/A_*$. In particular, the magnetic geodesics are 
ellipses with semi-axes of length $A_*$ and $1$ and parallel to the $x$- and $y$-direction, respectively.
 
{\bf Main result.} 
We introduce for any $A_*>0$ the infinite-dimensional space
\be\label{SDABstar}
V_{A_*} 
: = \Big \{(\alpha,\beta) \in C^\infty(\T)\times C^\infty 
(\T) \ : \ J_1'(k A_*) \widehat{\alpha}(k) - i J_1(k A_*) \widehat{\beta}(k) =0,\ \ \forall k \in\Z\setminus\{0\}\Big \}
\ee
where $ \widehat{\alpha}(k) $,  $\widehat{\beta}(k) $ denote the Fourier coefficients of $ \alpha$ and $\beta$, and
$J_1:\R\to \R$ is the first Bessel function
\be\label{Bess1}
J_1 (\theta) := \frac{1}{2 \pi} \int_{\T} e^{i \theta \sin \varphi} e^{- i \varphi }
\diff \varphi  \, . 
\ee

Our main result proves that the set of integrable Zoll magnetic systems is a subset of 
$ C^\infty  (\T) \times C^\infty (\T) $ which is tangent to  $ V_{A_*} $ at any 
trivial Zoll magnetic system $ (A_*,  {\rm id}_\T ) $.

\begin{thm}\label{thm:main}
{\bf (Integrable Zoll magnetic systems)}
For any $ A_* > 0 $  and  any
$ (\alpha, \beta) \in V_{A_*} $ defined in
\eqref{SDABstar},  there exists a smooth one-parameter family of 
{\it Zoll} magnetic systems on the two-torus of the form 
\begin{equation}\label{abmain}
(A(\tau), B(\tau)) := \big( A_* + a(\tau)  , {\rm id}_\T + b(\tau)  \big) 
\, , \quad \tau \in (- \delta, \delta) \, , \quad (a(\tau), b(\tau)) \in C^\infty (\T) 
\times C^\infty (\T) \, , 
\end{equation}
with 
\[ 
(a(0), b(0)) = (0,0) \qquad \text{and}\qquad \frac{\mathrm d}{\mathrm d \tau} \Big |_{\tau=0} (a(\tau), b(\tau)) = (\alpha, \beta) \, .
\]
\end{thm}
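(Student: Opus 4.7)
The plan is to follow the Nash--Moser scheme outlined in the abstract, adapting Guillemin's argument \cite{Guillemin:1976} to the magnetic setting on $\T^2$. First, I encode the Zoll property of the pair $(A_* + a, \mathrm{id}_\T + b)$ as a nonlinear equation $S(a,b) = 0$, where $S$ is an action-type functional designed to measure the deviation of the period function of the magnetic geodesics from being constant. Since all magnetic geodesics of the trivial system $(A_*, \mathrm{id}_\T)$ are congruent ellipses with the same minimal period $2\pi A_*$, the functional $S$ vanishes at $(a,b) = (0,0)$.

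Next I compute the linearization $L := \diff S|_{(0,0)}$. Parametrizing the integrable flow of the trivial system by angle--action variables and integrating the infinitesimal variation of the action along each closed magnetic orbit yields, on Fourier coefficients in the $x$-variable, an integral of the form \eqref{Bess1} and its derivative at $\theta = kA_*$. Consequently, $L(\alpha,\beta)(k)$ is a linear combination of $J_1'(kA_*)\widehat{\alpha}(k)$ and $J_1(kA_*)\widehat{\beta}(k)$, and we identify $\ker L$ with $V_{A_*}$ modulo the one-dimensional direction fixed by the normalization \eqref{constraint1}. This pins down $V_{A_*}$ as the correct tangent space of admissible infinitesimal Zoll deformations.

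To apply a Nash--Moser implicit function theorem along $V_{A_*}$, I would produce a tame right inverse of $\diff S$ in a neighborhood of $(A_*, \mathrm{id}_\T)$. The standard reduction is to invert the normal operator $\mathcal{N} := \diff S \circ \diff S^*$. In Guillemin's two-sphere case $\mathcal{N}$ is elliptic pseudo-differential, but here only the top-order symbol of $\mathcal{N}$ behaves that way. The subleading contributions are governed by the Bessel asymptotics
\[
J_1(\theta) = \sqrt{\tfrac{2}{\pi \theta}}\Big[\cos\bigl(\theta - \tfrac{3\pi}{4}\bigr) + O(\theta^{-1})\Big], \qquad \theta \to +\infty,
\]
whose oscillating factors $e^{\pm i k A_*}$, after inverse Fourier transform in $x$, become translations by $\pm A_*$. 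Thus, at every fixed order below the principal one, $\mathcal{N}$ is a sum of genuine \emph{Fourier integral operators} rather than a pseudo-differential operator.

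The main obstacle is therefore to invert $\mathcal{N}$ through this FIO remainder. My strategy is a resolvent-identity decomposition: write $\mathcal{N} = \mathcal{N}_0 + \mathcal{R}$ with $\mathcal{N}_0$ the elliptic principal part (which is directly invertible by standard pseudo-differential calculus), and reduce the inversion of $\mathcal{N}$ to a fixed-point problem for $\mathcal{N}_0^{-1}\mathcal{R}$ restricted to the subspace of high Fourier modes. The argument is closed by proving that the lower-order FIO contributions $\mathcal{R}$ obey \emph{asymmetric} tame Sobolev estimates --- gaining regularity in one variable at the price of a controlled loss in the conjugate one --- and that these asymmetric gains are precisely strong enough to make the associated Neumann series converge in the tame scales required by Nash--Moser. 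Feeding the resulting tame right-inverse of $\diff S$ into an abstract Nash--Moser implicit function theorem then yields, for each $(\alpha, \beta) \in V_{A_*}$, the smooth family $\tau \mapsto (a(\tau), b(\tau))$ of \eqref{abmain}, concluding the proof.
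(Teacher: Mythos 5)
Your proposal follows essentially the same route as the paper: characterizing Zoll systems as zeros of the action functional $S$, identifying $\ker \diff S(0,0)$ with $V_{A_*}$ via the Bessel function $J_1$, inverting the normal operator $\diff S\circ\diff S^*$ through a resolvent-identity reduction to high Fourier modes combined with asymmetric tame estimates for the lower-order Fourier integral remainders, and feeding the resulting tame right inverse into an abstract Nash--Moser theorem. The only imprecision worth noting is your gloss on ``asymmetric'': in the paper it means the constant multiplying the high norm $\|\gamma\|_s$ is independent of $s$ (and can be made small via the cutoff $N$ and interpolation), which is exactly what makes the Neumann series converge tamely.
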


Let us make some comments on Theorem~\ref{thm:main}. 
A formal power series construction of the functions 
$ (a(\tau), b(\tau) ) $ in \eqref{abmain}
seems possible by 
implementing Hamiltonian perturbation theory, 
for example in the spirit of
\cite{Loc92,BamGio}, but the problem of its convergence looks completely open, 
as in the geodesic work of Funk  \cite{Funk:1913}.
For this reason we assume a completely different point of view. 
\begin{enumerate}
\item   {\bf (Action functional $ S $)}
We construct the 
Zoll magnetic systems \eqref{abmain} as zeros of the ``action functional" $ S(a,b) $ defined  in \eqref{eq:S} below, which is 
expressed in terms of the Bessel function 
in Lemma \ref{lem:SBessel}. 
Interestingly, 
Bessel functions 
emerge also  in the magnetic  analogue of Hilbert's fourth problem in the plane, see 
\cite{Tab} and references therein. 
\item {\bf (Nash--Moser  implicit function theorem) }
We  prove Theorem~\ref{thm:main}  constructing
zeros of $ S(a,b) $ of the form
$$
(a(\tau), b (\tau)) = \tau (\alpha, \beta ) + o(\tau) \, , \quad |\tau| \ll 1 \, , 
\quad  (\alpha, \beta ) \in V_{A_*} \, , 
$$
via a Nash--Moser implicit function theorem (below we   explain  why it is needed). 
An explicit computation shows that the linearized 
operator  
$\diff S(0, 0)$ has kernel $V_{A_*}$ given in \eqref{SDABstar} and it is right-invertible, see Lemma \ref{lem:dStrivial}. 
The key difficult step to deduce Theorem~\ref{thm:main}  is Theorem \ref{thm:ri} which proves that the linearized operator $\diff S(a,b)$ admits a right-inverse in a neighborhood of $(0,0)$, satisfying tame estimates. 
\item {\bf (Invertibility of the Radon magnetic transform $ \diff S $)} 
The linearized operator $\diff S(a,b)$ can be regarded as 
a ``Radon magnetic" transform and establishing 
 tame estimates and its right invertibility 
 is much more subtle than for $\diff S(0,0) $. 
In the work of Guillemin \cite{Guillemin:1976} and Ambrozio--Marques--Neves \cite{Ambrozio:2021}, the right-invertibility of the Radon (or Radon--Funk) transform, call it  $ \mathcal F $,  
strongly  relies on the fact that the normal operator $\mathcal F \circ \mathcal F^*$ 
 is pseudo-differential. 
In our case instead $ \diff S (a,b) $ is not a Fourier integral operator,
but rather  a sum of Fourier integral operators with different phases (see Lemmata \ref{lem:tamedS} and \ref{dSadj}),
and therefore 
$\diff S \circ \diff S^*$ is {\it not}  pseudo-differential. 
We explain  below in detail  
how we overcome this major difficulty.  
Quite interestingly, a similar phenomenon appears also in 
microlocal analysis for  the X-ray transform in presence of conjugate points, as 
investigated by Holman and Uhlmann in \cite{HU18}.
\end{enumerate} 
Before discussing in detail the ideas of proof we mention that the Nash--Moser 
implicit function theorem also finds application to other related inverse problems on Riemannian surfaces as in \cite{H76,BP21}.

{\bf The action functional $ S $.} As already mentioned, the first step to apply the implicit function theorem is to characterize Zoll systems close to $(A_*,\mathrm{id}_\T)$ as zeros of a suitable {\it action functional} $S$,  originating 
from the Hamiltonian formulation  of \eqref{e:maggeo} developed in \cite{Asselle:2020}. 
 In order to write down $S$, let us recall that the equations of motion for the magnetic geodesics of $(A,B)$ 
  read 
\begin{equation}
\dot x = \cos \varphi \, , \qquad
 \dot y = \frac{\sin \varphi}{A(x)} \, , \qquad  \dot \varphi = -  \frac{B'(x)}{A(x)} - \frac{A'(x)}{A(x)} \sin \varphi \, ,
\label{eqmotion}
\end{equation}
where $\varphi \in \T$ denotes the angle formed by a unit tangent vector at $(x,y)$ with $\partial_x$.
The integrable magnetic system induced by $(A,B)$ has the 
(circle-valued) first integral 
\begin{equation}\label{firstintegral}
I_{A,B} : \T\times \T \to \T \, , \qquad I_{A,B}(x,\varphi)= A(x)\sin \varphi + B(x) \, .
\end{equation}
From now on we will assume 
that 
$ (A,B) $ belongs to a sufficiently small 
$C^1$-neighborhood  
of $(A_*,{\rm id}_\T )$
and we write  
\be\label{AxBx}
A(x) = A_* + a(x) \, , \quad B(x) = x + b(x)  \, , \qquad \forall x \in \T \, , 
\ee
where $ (a, b)$  are small periodic real functions. In this case 
$\partial_x I_{A,B} = A'(x) \sin \varphi + B'(x)  $ is everywhere positive. 
Thus, for fixed $\varphi\in\T$ we can invert $I_{A,B}$ with respect to $x$ getting a function $x_{A,B}:\T\times \T\to \T$ such that 
\begin{equation}\label{IAB}
I_{A,B}(x_{A,B}(I,\varphi),\varphi) = I \, , \qquad \forall (I,\varphi)\in \T\times \T \, .
\end{equation}
Moreover, by \eqref{eqmotion}, 
we can use the angle $\varphi$ to parametrize each magnetic geodesic. Following \cite{Asselle:2020}, for each $I\in\T$ we denote by $\Delta(a,b)(I)$ the \textit{$y$-displacement} of a magnetic geodesic with constant of motion $I$ when the angle $\varphi$ makes a full turn around $\T$. The function $\Delta(a,b)\colon\T\to\R$ is 
smooth and has zero mean. 
It follows that $\Delta(a,b)$ admits a unique primitive $S(a,b) : \T \to \R $ with zero mean, which has the expression 
\begin{equation}\label{eq:S}
 S(A,B)(I):= S(a,b) (I) := 
\int_{\T} (\cos^2\varphi) \,  A(x_{A,B}(I,\varphi)) \, 
\frac{\partial x_{A,B}}{\partial I}(I,\varphi) \, \diff \varphi - \pi A_0  
\end{equation}
where $A_0:= \frac 1{2\pi} \int_\T A(x) \, \diff x$, see \cite[Lemma 3.7]{Asselle:2020}. Thus, $I$ is a critical point of $S(a,b)$ if and only if the $y$-displacement is zero, that is, if and only if the magnetic geodesics with constant of motion $I$ are periodic. Moreover, at critical points the function $S(a,b)$ coincides with the Hamiltonian action of the periodic orbit.  
Thus, we arrive at the following result:

\begin{center}
{\it The magnetic system $(a,b)$ is Zoll  if and only if $S(a,b)\equiv0$}. 
\end{center}

{\bf The Nash-Moser implicit function theorem.} 
Since 
$  S(0,0) = 0$,
it is natural to look for  zeros  of $S$ in a neighborhood of 
$(0,0)$ by means of an implicit function theorem.  
Let  $H^s(\T) := H^s(\T,\R)$ denote the Sobolev space of $ 2 \pi $-periodic 
real-valued functions.  For any  $ s \geq \frac 72 $, 
  the action functional $ S $ continuously extends to 
\[
S\colon  \big( H^{s-\frac12}(\T)\times H^{s-\frac 12}(\T) \big)  \cap {\mathcal U} \to H^{s}_0(\T) \, , 
\quad (a,b) \mapsto S(a,b) \, ,  
\]
where $\mathcal U $ is a sufficiently small neighborhood of $(0,0) $
in $ H^{3} (\T)\times H^{3}(\T) $ and $H^{s}_0(\T) $
is the subspace  of $  H^{s}(\T) $ of 
functions with zero mean, see
Lemma \ref{lem:tameS}. Furthermore,  the differential  
\[
\diff S(0,0) : H^{s-\frac 12}(\T)\times H^{s-\frac 12} (\T)\to H^{s}_0(\T) \, , \qquad \forall s \in \R \, , 
\]
is a bounded surjective operator 
and
$ \ker \diff S (0,0)$ equal to the $H^{s-\frac 12}$-closure of the subspace $V_{A_*}$ defined in \eqref{SDABstar}, see Lemma \ref{lem:dStrivial}. In particular, surjectivity follows from the remarkable fact that the Bessel function $J_1$ and its derivative $J_1'$ do not have common zeros, as $J_1$ satisfies a second-order linear ordinary differential equation, see Lemma \ref{lem:J1}.

On the other hand,  since\[
\diff S(a,b) \colon H^{s-\frac12}(\T)\times H^{s-\frac 12}(\T)\to H^{s}_0(\T)
\] 
is a bounded operator only if $(a, b) \in H^{s+\frac 12}(\T)$, for $s\geq \frac 52$ (see Lemma \ref{lem:tamedS}), the classical implicit function theorem can not be used to 
deduce Theorem \ref{thm:main}. 
The origin of this type of problems is that $S(a,b)$ in  \eqref{eq:S} 
contains composition terms like $ A \circ x_{A,B}$, which lose regularity when differentiated with respect to $(a,b)$. 
Thus, it is natural to try to apply a Nash--Moser iteration scheme to compensate for this loss of regularity in $ (a,b) $. The key step 
is to construct, for any $(a,b)$ close to $(0,0)$, 
a right inverse of the differential $\diff S(a,b)$  satisfying the so-called tame estimates \eqref{Rabfinal}.
This is the content of the next result which is the main analytic ingredient to prove Theorem \ref{thm:main}. 
\begin{thm}\label{thm:ri}
	{\bf (Right inverse of $ \diff S (a,b) $)}
	There exists $\delta>0$ such that,  for any
	$ \| (a,b) \|_{6} < \delta $,  the operator
	$ \diff S (a,b) $ admits a right inverse  $R(a,b)  $ satisfying the tame estimates, for any $ s \geq \tfrac52 $,  
	\be\label{Rabfinal}
	\| R(a,b)  \gamma \|_s 
	\leq C (s) \|  \gamma \|_{s+\frac12} +  C(s) \| (a, b) \|_{s+\frac72} \|  \gamma  \|_{3}  \, , 
	\quad \forall \gamma \in H^{s+\frac12}_0(\T)  \,,
	\ee
	where $C(s)$ are positive constants.
\end{thm}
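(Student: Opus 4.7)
The plan is to build the right inverse via the normal operator
\[
N(a,b) := \diff S(a,b)\circ \diff S(a,b)^*
\]
and the standard formula $R(a,b):=\diff S(a,b)^*\circ N(a,b)^{-1}$. Once $N(a,b)$ is shown to be invertible on $H^s_0(\T)$ with a tame bound of the asymmetric form in \eqref{Rabfinal}, the claimed estimate for $R(a,b)$ follows by composing with the tame bounds for $\diff S(a,b)^*$ supplied by Lemma \ref{dSadj}. The whole game is therefore to analyze and invert $N(a,b)$.

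My first step is to unfold $N(a,b)$ using the Fourier-integral-operator representations of $\diff S(a,b)$ and $\diff S(a,b)^*$ furnished by Lemmata \ref{lem:tamedS} and \ref{dSadj}. Multiplying these out produces a sum of compositions of FIOs whose phases do not in general cancel, so the result is not pseudo-differential. Inserting the large-argument asymptotics of $J_1$ and $J_1'$, however, I expect to isolate a decomposition
\[
N(a,b) = P(a,b) + K(a,b),
\]
in which $P(a,b)$ is an elliptic pseudo-differential operator whose principal symbol is bounded away from zero thanks to Lemma \ref{lem:J1} (the joint non-vanishing of $J_1$ and $J_1'$ forces the Pythagorean-type combination arising at leading order to be uniformly positive at large frequencies), while $K(a,b)$ is a finite sum of genuine Fourier integral operators of strictly lower order, produced by the Bessel remainders and by the residual phase mismatches between the FIO components.

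The second step is a Schur-complement (resolvent) decomposition with respect to a frequency cutoff $\Pi_{>N}$, $\Pi_{\leq N}$ at a threshold $N=N(\delta)$ to be chosen at the end. The low-mode block $\Pi_{\leq N}N(a,b)\Pi_{\leq N}$ is a finite-dimensional perturbation of the invertible operator $\Pi_{\leq N}N(0,0)\Pi_{\leq N}$, hence invertible for $\delta$ small. The heart of the argument is the high-mode block, which I factor as
\[
\Pi_{>N}N(a,b)\Pi_{>N} = \Pi_{>N}P(a,b)\Pi_{>N}\,\bigl(I + (\Pi_{>N}P(a,b)\Pi_{>N})^{-1}\,\Pi_{>N}K(a,b)\Pi_{>N}\bigr),
\]
invert the pseudo-differential factor by a standard parametrix, and invert the remainder by a Neumann series. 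Assembling the low-mode and high-mode pieces through the Schur-complement identity then yields $N(a,b)^{-1}$, and hence $R(a,b)$.

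The main obstacle is making the Neumann step quantitative in the tame scale \eqref{Rabfinal}. Since the pieces of $K(a,b)$ are FIOs with distinct phases, naive Sobolev composition bounds carry a loss of derivatives in $(a,b)$ that would destroy the asymmetric tame estimate. Following the hint of the introduction, the way out is to establish \emph{asymmetric} tame estimates for the lower-order FIOs: bounds in which the $(a,b)$-dependence is packaged in a single factor $\|(a,b)\|_{s+7/2}$ multiplying a low-regularity norm $\|\gamma\|_3$, while the highest-regularity contribution is controlled only by $\|\gamma\|_{s+1/2}$ through the pseudo-differential parametrix. Proving these asymmetric bounds uniformly in $N$, and then choosing $N$ large relative to $\delta$ so that the Neumann operator has norm $<1$ on the high-mode subspace, is the principal technical hurdle; once it is settled, the tame estimate \eqref{Rabfinal} propagates through the Neumann expansion and the Schur identity to the right inverse $R(a,b)$.
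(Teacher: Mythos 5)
Your proposal is correct and follows essentially the same route as the paper: the right inverse is $\diff S^*\circ(\diff S\circ\diff S^*)^{-1}$, the normal operator is inverted via a low/high-frequency Schur (resolvent) decomposition, the high-mode block is split into an invertible leading part (pseudo-differential/diagonal at highest order) plus lower-order FIO remainders, and these are absorbed by a Neumann series using asymmetric tame estimates obtained from the Bessel asymptotics, the $N^{-1/2}$ gain on high modes, and interpolation. The only cosmetic differences are that the paper inverts the leading part exactly as a diagonal operator (controlling its off-diagonal part by non-stationary phase and $s$-decay norms rather than a parametrix) and fixes $N$ first as an absolute constant before shrinking $\delta=\delta(N)$.
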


{\bf Ideas of proof of Theorem \ref{thm:ri}.}  Even though $\diff S(0,0)$ has a right inverse, the right-invertibility of $\diff S(a,b)$ does not follow by a direct perturbative argument because 
$\diff S(a,b)$ is not close in operator norm to $\diff S(0,0)$, cfr.~\eqref{dSa-dS0small}. The problem originates again from the term $A\circ x_{A,B}$ in \eqref{eq:S} whose differential is not continuous in $(a,b)$, no matter how regular $(a,b)$ is. The phenomenon is typical of Fourier integral operators, which involve composition terms, and in fact $\diff S(a,b)$
(see  \eqref{diffSab},  \eqref{eq:E*App})
and its $L^2$-adjoint $\diff S(a,b)^*$ (see \eqref{eq:dS*},
\eqref{eq:EApp}) 
are sum of Fourier integral operators with different phases. 

To better explain this problem and how we circumvent it to construct 
a right inverse of $\diff S(a,b)$, let us first analyze 
the example of a composition operator 
\[
T_p\colon H^s(\T)\to H^s(\T) \, ,\quad (T_p\phi)(x) :=\phi(x+p(x)) \, ,\quad \forall\,x\in\T  \, ,
\]
where $ p\in H^s(\T) $. 
The operator $T_0 $ 
is the identity. On the other hand,
\[
(T_p-T_0)\phi(x)=\left(\int_0^1\phi'(x+ \tau p(x))\diff \tau \right) \,  p(x) \, , 
\]
which is small only as operator from $H^{s+1}(\T)\to H^s(\T)$ due to the appearance of the derivative of $\phi$. Therefore, the invertibility of $T_p$ does not follow by a direct perturbative argument. On the other hand, if $p$ is $C^1$-small, the map $x\mapsto x+p(x)$ is a diffeomorphism of $ \T $ 
with inverse $ y \mapsto y + \breve p(y)$, see Lemma \ref{lem:tamecomposition}. Therefore $T_p$ is indeed invertible with inverse given by $T_{\breve p}$. The key to transfer this type of argument to more general situations is to note that $T_{\breve p}$ is related to the $L^2$-adjoint $T_p^*$  by a multiplication operator:
\[
T_{p}^*\colon H^s(\T)\to H^s(\T) \, ,\qquad T_p^*\psi=T_{\breve p}\Big(\frac{1}{1+p'}\, \psi \Big)\,.
\]
Therefore
\[
(T_p^*\circ T_p)\phi=(1+\breve p')\, \phi \, ,\qquad
(T_p\circ T_p^*)\psi=\frac{1}{1+p'}\, \psi\,.
\]
The operators $T_p^*\circ T_p$ and $T_p\circ T_p^*$ are pseudo-differential (actually, multiplication operators), and can therefore be inverted by a perturbative Neumann series argument, since they depend continuously on $p$ and are invertible for $p=0$. Hence, also $T_p$ is invertible with
\[
T_p^{-1}=T_p^*(T_p\circ T_p^*)^{-1}=(T_p^*\circ T_p)^{-1}T_p^*.
\]

Following the same pattern in our situation, the key step in the proof of Theorem \ref{thm:ri} is to show that, if $\|(a,b)\|_6<\delta$ is sufficiently small, the normal operator $M(a,b):=\diff S(a,b)\circ \diff S(a,b)^*$ is invertible and satisfies the tame estimates \eqref{Mab-1}. {However, in the present case $M(a,b)$ is {\it not} pseudo-differential
($ \diff S (a,b)$ is a sum of  Fourier integral operators
with different phases). As noted above, this is a remarkable difficulty and represents a significant difference with respect to Guillemin's situation \cite{Guillemin:1976} as well as in 
Ambrozio--Marquez--Neves \cite{Ambrozio:2021}, where the analogous operators, defined starting from the Radon transform of a metric $g$ close to $g_0$, are actually pseudo-differential. The key observation to overcome this problem is to notice that $M(a,b)$ is still pseudo-differential at the highest order so that it is enough to prove
invertibility of its restriction on subspaces of functions supported on high Fourier modes. The precise argument is carried out in Section~\ref{dSdSstar} and involves the following steps: 

\noindent
{\bf 1)}  First, we split $u=\Pi_Lu+\Pi_Ru $ 
into \textit{low} and \textit{high} Fourier modes where 
\[\Pi_L u:= \sum_{|j|\leq N} \widehat{u}(j) e^{ijx}, \quad \Pi_R u:= \sum_{|j|>N} \widehat{u}(j) e^{ijx},\]
for $N\in \N$ large to be determined.  By the resolvent identity Lemma \ref{lem:resol}, it is enough to prove the invertibility of the operator $\tilde M_R^R$ in \eqref{ARRsmall}, which is the restriction $\Pi_R M(a,b) \Pi_R$ up to a small finite-rank operator (see Lemma \ref{lem:coupling}).
\\[1mm]
{\bf 2)} 
We  then decompose 
\[
\tilde M_R^R=\mathcal D+\mathcal N = \mathcal D( \text{Id} + \mathcal R) \, ,
\qquad \mathcal R = {\mathcal D}^{-1} \mathcal N \, , 
\]
as in \eqref{ARRNlem},  \eqref{TabDabNab},
where $\mathcal D$ is an invertible diagonal operator, see Lemma \ref{lemD}, and 
$ \mathcal N, \mathcal R $ are off-diagonal ones.
Our goal is to show that $\mathcal R$ is a bounded operator which satisfies,
 for any $ s $ larger than a threshold $ s_1 \geq 1 $, 
 \textit{asymmetric} tame estimates with \textit{smallness} as (cfr.~\eqref{tameR1R2R3}) 
\begin{equation}
\label{eq:Rgamma}
\| \mathcal R \gamma\|_s \leq C(s_1,N) \|\gamma\|_{s} + C(s,N) \|\gamma\|_{s_1} \, , \quad \forall s\geq s_1 \, ,
\end{equation}
where the \textit{asymmetry} refers to the fact that the constant $C(s_1,N)$ does not depend on $s$ and the \textit{smallness} refers to the fact that $C(s_1,N)$ can be taken arbitrarily small if $N$ 
is large enough and $\|(a,b)\|_6$ is sufficiently small. In view of \eqref{eq:Rgamma}, a direct perturbative Neumann series argument 
implies the invertibility of $\tilde M_R^R$ and tame estimates for the inverse (Lemma \ref{lem:invertame}). 
\\[1mm]
{\bf 3)}  We are left to prove the  key asymmetric tame estimates \eqref{eq:Rgamma}. Pseudo-differential operators always satisfy asymmetric tame estimates, see Lemma \ref{lem:tamepseudodiff}. 
However, as mentioned above, the operator 
$\mathcal R$ is not pseudo-differential
but just a sum of Fourier integral operators. It turns out that composition operators, and a fortiori Fourier integral operators,  satisfy 
only standard tame estimates} as \eqref{eq:tamecomposition}
 (see Remark \ref{r:nona}). To resolve this issue, in view of the decay properties of Bessel functions we decompose 
 \[
 \mathcal R= \mathcal R_1 + \mathcal R_2 + \mathcal R_3 \, ,
 \]
  where the highest order term $\mathcal R_1$ is pseudo-differential, and $\mathcal R_2$ and $\mathcal R_3$ are lower order  operators: 
\begin{itemize}
\item 
$\mathcal R_1$ satisfies asymmetric tame estimates with smallness
because 
its matrix elements have  off-diagonal decay, see Lemma \ref{decayR1}; 
\item $\mathcal R_2$ is a small finite rank operator coming from the coupling between low and high Fourier modes in the resolvent identity decomposition and thus it
satisfies the regularizing  estimates of Lemma \ref{lem:restireso}; 
\item  $\mathcal R_3$ is not pseudo-differential 
 but a sum of Fourier integral operators 
more regularizing than $ {\mathcal R}_1$. 
By such  additional regularity  
we can find $ N $ large enough to achieve smallness and use 
the interpolation inequality \eqref{asintpo} to 
 turn the standard tame estimates of Fourier integral operators into asymmetric ones, see Lemmata \ref{restoE1} and \ref{lemR2}.
\end{itemize}

{\bf Structure of the paper.} 
 In Section \ref{sec:func} we introduce the analytical tools  to prove 
 tame estimates in Sobolev spaces for the action functional  $ S $ 
 (Section \ref{sec:S}), its differentials and their adjoints (Section  \ref{sec:diffS}). 
 Section \ref{dSdSstar}  proves the key result
about the invertibility of $\diff S \circ \diff S^* $ in a neighborhood of $(0,0)$
 which 
 implies Theorem \ref{thm:ri}. 
Section \ref{sec:NM} is finally devoted to the proof of 
Theorem \ref{thm:main}. 

\vspace{2mm}

\textbf{Acknowledgments.} L.A. and G.B. are partially supported by the Deutsche Forschungsgemeinschaft (DFG) through the SFB/TRR
191 (Project-ID 281071066) ``Symplectic Structures in Geometry, Algebra and Dynamics''. G.B.~is
partially supported by the Deutsche Forschungsgemeinschaft 
under Germany's Excellence Strategy
EXC2181/1 - 390900948 (the Heidelberg STRUCTURES Excellence Cluster) and the Research Training Group RTG 2229 - 281869850 (Asymptotic Invariants and Limits of Groups and Spaces). M.B.~is supported by PRIN 2020 (2020XB3EFL001) ``Hamiltonian and dispersive PDEs". M.B. thanks Pietro Baldi 
for useful conversations. The authors are grateful to the anonymous referees for the careful reading of the manuscript.

{\bf Notation.}
We denote 
by $ \N := \{1, 2, \ldots \} $ the positive integers and $ \N_0  := \{ 0 \} \cup \N $. 
The notation $ a \lesssim_s b $ means that $ a \leq C(s) b $ for some positive constant 
$ C(s) > 0 $.  Notation $ a \sim_s b $ means that $ C_1(s)  b \leq a \leq C_2 (s)  b  $
for $ 0 < C_1 (s) < C_2 (s) $. Along the paper $ C(s) $ denote 
different positive constants non-decreasing in $ s $. If $y$ is a real number, we denote $\langle y \rangle:=\max\{1,|y|\}$.

\section{Functional setting}\label{sec:func}

In this 
 section we present basic properties of Sobolev spaces, 
 the notion of tame operators,  matrices with off-diagonal decay, 
 pseudo-differential operators and their symbols, 
composition and Fourier series/integral operators.

\vspace{1mm}

{\bf Sobolev spaces.}  
The Hilbert space 
$L^2(\T) := L^2(\T,\C)$ 
is equipped with the usual 
scalar product 
\be\label{uvL2}
(u,v)_{L^2} := \frac{1}{2\pi}\int_{\T} u(x) \overline{v(x)}  \, \diff x \, , 
\ee  
and we denote $ \{ e_j \}_{j \in \Z} $, where $ e_j := e_j (x) := e^{ i j x} $, its canonical 
orthonormal  basis. 
The product space $ L^2 (\T) \times L^2 (\T) $ is endowed with the usual scalar product $( (u_1,u_2),(v_1,v_2))_{L^2\times L^2} := (u_1,v_1)_{L^2} + (u_2,v_2)_{L^2}$.

For any $s\in \R$,  we consider the Sobolev space
\be\label{Sobos}
H^s (\T)  := H^s (\T,\C)  := \Big\{ 
u(x) = \sum_{j \in \Z} \widehat u(j) e_j  \ : \ \| u\|_s^2 := \sum_{j\in \Z} \langle j \rangle^{2s} |\widehat{u}(j)|^2  <+ \infty 
\Big\}
\ee
where $ \widehat u(j) = (u, e_j)_{L^2} $ are the Fourier coefficients of the $ 2 \pi $-periodic 
function $ u(x) $ .
We have $ H^0 (\T )  = L^2 (\T) $. We denote by $ H^s_0 (\T ) $ the subspace of functions in $H^s (\T) $
with zero mean. We will work with the subspace $H^s (\T, \R) $ of
real-valued functions in \eqref{Sobos} and with real operators acting between such spaces.
A  function $u $ in $ H^s(\T,\R)$ is characterized by 
$ \widehat u(-j) = \overline{\widehat u(j)} $, for any $ j \in \Z $. 

The Sobolev norms $ \| \cdot \|_s $ satisfy the following interpolation estimates, see e.g.~\cite[Lemma 4.5.4]{Berti:2013}: let $ \alpha \leq a \leq b \leq \beta $ such that
$ \alpha + \beta = a + b $ then 
\be\label{sumabab}
\| u \|_{a} \| u \|_b \leq \| u \|_{\alpha} \| u \|_{\beta} \, . 
\ee
The Sobolev norms  $ \|\cdot \|_s $ also satisfy  the following interpolation estimates, see 
\cite[Lemma 2.2]{Berti:2020}:  
let $ s_1, s_2 \geq 0 $ and $ q_1,q_2 > 0 $, then  for any $ 	\epsilon > 0 $ there is a constant $ C(\epsilon ) > 0 $ such that 
\be\label{asintpo}
\| u \|_{s_1+q_1} \| v \|_{s_2+q_2} \leq 
\epsilon \| u \|_{s_1+q_1+q_2}  \| v \|_{s_2} + 
C(\epsilon) \| u \|_{s_1}  \| v \|_{s_2+q_1+q_2} \, . 
\ee

For $ s > 1 / 2 $, the Hilbert space $H^s(\T)$ compactly embeds in $C^0(\T)$, 
the spaces $H^s (\T)  $ form an algebra with respect to the product of functions 
and the following asymmetric  tame estimates  hold (see e.g. \cite[Section 4.5]{Berti:2020b}):
for any  $ s \geq s_0>1/2 $ and any $u,v\in H^s(\T)$, 
\begin{equation}\label{eq:tameproduct}
\|uv\|_s  
 \leq C(s) \|u\|_s \|v\|_{s_0}+  C(s_0) \|u\|_{s_0} \|v\|_s 
\end{equation}
where the  constants $ C(s) > 0 $ are non-decreasing in $ s $. 
We also remind the inequality (\cite[Lemma 4.5.2]{Berti:2013}): 
for $ s_0 > 1 / 2 $ and any $ 0 \leq s \leq s_0 $,
any $ u \in H^{s_0}(\T) $, $ v \in H^{s} (\T) $, 
\be\label{intermba}
\| u v \|_s \leq C(s_0) \| u \|_{s_0} \| v \|_s \, .  
\ee
Throughout the paper we will fix the threshold (see Lemma \ref{lem:tamecomposition}) 
\[
s_0=1 \, .
\] 
{\bf Tame operators.} Given $ \sigma \geq 0 $, 
we say that a linear operator $ M $ acting on the scale of Sobolev 
spaces  $ H^s(\T) $ is $ \sigma $-tame
if there is $s_1\geq s_0$ such that, for any $  s \geq s_1 $,  
it satisfies 
\be\label{sigmatame}
\| M u  \|_s \leq C_M (s_0) \| u \|_{s+\sigma} + C_M (s) \| u \|_{s_0+\sigma}
\ee
where the functions $ s \mapsto C_M (s) \geq 0 $ are non-decreasing in  $s $.
We say that $ C_M (s) $ are tame constants for the operator $ M  $.  
The composition $ M_1 M_2 $ of a $ \sigma_{M_1}$-tame operator $ M_1 $
and a $ \sigma_{M_2}$-tame operator $ M_2 $ 
 is a $ (\sigma_{M_1}+  \sigma_{M_2}) $-tame operator.
 
We prove a simple lemma about the invertibility of tame operators. 
\begin{lem}{\bf (Tameness of the inverse)}\label{lem:invertame}
Assume that $  {\mathcal R} $ is a $ 0$-tame operator, namely that \eqref{sigmatame} holds
with $ \sigma = 0 $.
Then, if 
$ C_{\mathcal R}  (s_1)  < 1 / 4 $, the 
linear operator $ {\rm Id} +  {\mathcal R} $ is invertible and its inverse 
satisfies the tame estimates
\begin{align}
& \|  ({\rm Id} +  {\mathcal R})^{-1} v \|_{s_1} \leq 2\| v \|_{s_1} \label{1+Rs0} \\
& \| ({\rm Id} +  {\mathcal R})^{-1} 
v \|_s \leq 2  \| v \|_{s} +  4C_{\mathcal R}  (s)  \| v \|_{s_1} \, ,
\qquad \forall s \geq s_1 \, . \label{1+Rstame}
\end{align}
\end{lem}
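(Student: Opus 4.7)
The argument is a Neumann series refined by the tame estimate to compensate for the fact that the operator norm of $\mathcal R$ on $H^s$ for $s\geq s_1$ need not be small. The first observation is that, by monotonicity, $C_{\mathcal R}(s_0)\leq C_{\mathcal R}(s_1)<1/4$, so evaluating the $0$-tame estimate \eqref{sigmatame} at $s=s_1$ and using the trivial embedding $\|u\|_{s_0}\leq\|u\|_{s_1}$ yields
\[
\|\mathcal R u\|_{s_1}\leq \bigl(C_{\mathcal R}(s_0)+C_{\mathcal R}(s_1)\bigr)\|u\|_{s_1}\leq 2C_{\mathcal R}(s_1)\|u\|_{s_1}<\tfrac12\|u\|_{s_1}.
\]
Hence the Neumann series $\sum_{k\geq 0}(-\mathcal R)^k$ converges in the operator norm on $H^{s_1}$, providing the inverse and establishing \eqref{1+Rs0} with constant $1/(1-1/2)=2$.

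\textbf{Propagation to $H^s$.} For $s\geq s_1$, the operator norm $\|\mathcal R\|_{H^s\to H^s}\leq C_{\mathcal R}(s_0)+C_{\mathcal R}(s)$ may be large, so Neumann convergence in operator norm on $H^s$ is not immediate. I would instead estimate the iterates separately: applying \eqref{sigmatame} to $w=\mathcal R^{k-1}v$ gives
\[
\|\mathcal R^k v\|_s\leq C_{\mathcal R}(s_0)\|\mathcal R^{k-1}v\|_s+C_{\mathcal R}(s)\|\mathcal R^{k-1}v\|_{s_0}.
\]
By Step 1 the last factor is controlled by $\|\mathcal R^{k-1}v\|_{s_1}\leq (2C_{\mathcal R}(s_1))^{k-1}\|v\|_{s_1}$. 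Setting $\rho:=C_{\mathcal R}(s_0)<1/4$ and $q:=2C_{\mathcal R}(s_1)<1/2$, the resulting recursion $a_k\leq \rho\,a_{k-1}+C_{\mathcal R}(s)q^{k-1}\|v\|_{s_1}$ with $a_0=\|v\|_s$ gives $a_k\leq \rho^k\|v\|_s+C_{\mathcal R}(s)\,k\,q^{k-1}\|v\|_{s_1}$, since $\rho\leq q$. Summing the two geometric/arithmetic-geometric series produces the absolute convergence of $\sum_k(-\mathcal R)^k v$ in $H^s$ and the pointwise bound $\|(\mathrm{Id}+\mathcal R)^{-1}v\|_s\leq \tfrac{1}{1-\rho}\|v\|_s+\tfrac{C_{\mathcal R}(s)}{(1-q)^2}\|v\|_{s_1}$, which is stronger than \eqref{1+Rstame}.

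\textbf{Alternative closing step.} Once the partial sums are uniformly bounded in $H^s$ and one knows $u:=(\mathrm{Id}+\mathcal R)^{-1}v\in H^s$, an even cleaner way to reach \eqref{1+Rstame} is to use the identity $u=v-\mathcal R u$ directly, combined with the tame estimate and \eqref{1+Rs0}:
\[
\|u\|_s\leq \|v\|_s+C_{\mathcal R}(s_0)\|u\|_s+C_{\mathcal R}(s)\|u\|_{s_0}\leq \|v\|_s+\tfrac14\|u\|_s+2C_{\mathcal R}(s)\|v\|_{s_1},
\]
so that $\tfrac34\|u\|_s\leq \|v\|_s+2C_{\mathcal R}(s)\|v\|_{s_1}$ and \eqref{1+Rstame} follows with room to spare.

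\textbf{Main obstacle.} The only non-routine point is the one addressed in Step 2: the Neumann series cannot be summed in operator norm on $H^s$, so one must use the asymmetric structure of the tame estimate (smallness carried by the low-regularity norm $\|\cdot\|_{s_0}$, controlled via Step 1) to turn each iteration into a genuine geometric improvement. Everything else is bookkeeping of constants.
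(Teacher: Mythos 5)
Your proof is correct and, in its ``alternative closing step'', coincides with the paper's own argument: invertibility on $H^{s_1}$ via the contraction $\|\mathcal R u\|_{s_1}\le 2C_{\mathcal R}(s_1)\|u\|_{s_1}<\tfrac12\|u\|_{s_1}$, followed by the identity $u=v-\mathcal R u$ and absorption of the $\|u\|_s$ term using \eqref{1+Rs0}. Your Step 2 (summing the Neumann iterates in $H^s$ via the recursion $a_k\le\rho\,a_{k-1}+C_{\mathcal R}(s)q^{k-1}\|v\|_{s_1}$) is a worthwhile refinement rather than a detour, since it supplies the a priori finiteness of $\|u\|_s$ that the paper's absorption argument tacitly assumes.
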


\begin{proof}
The invertibility of the operator $ {\rm Id} +  {\mathcal R} : H^{s_1}(\T) \to H^{s_1}(\T)$ 
and the bound \eqref{1+Rs0} directly follows  
by $ \|   {\mathcal R} u \|_{s_1} \leq 2 C_{\mathcal R}  (s_1) \| u \|_{s_1} <   \| u \|_{s_1} / 2 $ 
(which is \eqref{sigmatame} for $ M = {\mathcal R} $ at $ s = s_1 $). 
Then we prove \eqref{1+Rstame}. 
We have 
\[
u =  ({\rm Id} +  {\mathcal R})^{-1} v \quad \Longleftrightarrow \quad ({\rm Id} + {\mathcal R} ) 
u =  v \quad \Longleftrightarrow \quad 
u =  v -  {\mathcal R} u \, . 
\]
Thus, for any $ s \geq s_1 $,  
using \eqref{sigmatame} for $ {\mathcal R}  $ and \eqref{1+Rs0}, we get 
\[
\| u \|_s 
 \leq \| v\|_s + \|  {\mathcal R} u \|_s  \leq  \| v\|_s + 
C_{\mathcal R} (s)  \| u \|_{s_1} +  C_{\mathcal R} (s_1)  \| u \|_{s}
 \leq  \| v\|_s +  C_{\mathcal R} (s) 2  \| v \|_{s_1} +  C_{\mathcal R} (s_1)  \| u \|_{s}
\]
and therefore, since $ C_{\mathcal R}  (s_1) < 1 / 2 $, 
\[
\tfrac12  \| u \|_s  \leq (1 - C_{\mathcal R} (s_1)) \| u \|_s  \leq \| v\|_s +  2 C_{\mathcal R}  (s)  \| v \|_{s_1} 
\]
implying \eqref{1+Rstame}. 
\end{proof}

{\bf Matrix representation of linear operators.} 
A linear operator 
$ M :  L^2 (\T) \to L^2 (\T) $ 
can be represented with respect to the exponential basis $ \{ e_j \}_{j \in \Z} $ 
in the matrix form 
\[
M u = \sum_{k \in \Z} \Big( \sum_{j \in \Z} M^j_k \widehat{u}(j) \Big) e_k \, , \qquad
M^j_k := ( M e_j, e_k )_{L^2} \, . 
\]
In what follows we shall identify  the operator $ M $ with the matrix $ (M^j_k)_{j,k\in \Z} $.
If $ M $ is a diagonal operator, i.e. $ M_k^j = 0 $ for any $ j \neq k $, we also write 
$ M = {\rm diag} (M_j^j )$. 

Following \cite[Appendix B]{Berti:2013} we define the $ s $-decay norm  of $M$ by
\be\label{sdecay}
|M|_s^2 := \sum_{m \in \Z}  \Big( \sup_{j-k=m} |M^j_k| \Big)^2 \langle m \rangle^{2s}.
\ee
Such a norm is designed to estimate the off-diagonal decay of matrices similar to the Toeplitz matrices which represent the multiplication operator for a Sobolev function. Indeed
the $ s $-decay norm of the multiplication operator by a $ 2 \pi $-periodic function $ p(x) $ is
\be\label{toplitzpro}
|p|_{s} = \| p \|_s \, . 
\ee
The  $ s $-decay norm satisfies the following asymmetric 
interpolation estimates (see e.g. \cite[Appendix B1]{Berti:2020}), for any $ s \geq s_0 > \tfrac12 $,   
\be\label{M1M2tameas}
| M_1 M_2 |_s \leq C(s_0) | M_1|_{s_0} | M_2|_{s} + C(s) | M_1|_{s} | M_2|_{s_0} \, . 
\ee
It also controls the action on Sobolev spaces, for any $ s \geq s_0 > \tfrac12  $,  
\be\label{tameMsa}
\| M u  \|_s \leq C(s_0) | M|_{s_0} \| u \|_{s} + C(s) | M |_{s} \| u \|_{s_0} \, . 
\ee

{\bf Finite rank operators.} 
Given $ N \in \N $, we  say that a linear operator $ {\mathcal R}  $ is ``finite rank" if  it has the form 
\be \label{defFR}
{\mathcal R}   = \sum_{|j| \leq N} ( \cdot, g_j )_{L^2} \chi_j 
\ee
where $ g_j $, $ \chi_j $   are $ C^\infty (\T) $ 
smooth functions. Such an operator is infinitely many times regularizing, in particular, for any $ s \geq 0  $,  
\be\label{Rure}
\| {\mathcal R} u \|_s \leq   \Big( \sum_{|j| \leq N}\| g_j \|_0\|  \chi_j  \|_s\Big) \| u \|_0 \, . 
\ee
The adjoint of the operator ${\mathcal R}  $ defined 
 in \eqref{defFR} with respect to the $ L^2$-scalar product \eqref{uvL2} is 
\be\label{RFRadj}
{\mathcal R}^*  = \sum_{|j| \leq N} ( \cdot, \chi_j  )_{L^2} g_j \, . 
\ee

{\bf Pseudo-differential operators.} 
For any function $ h : \T \times \Z \to \C $ 
we define the ``quantized pseudo-differential" operator
\be\label{pseudoop}
h(x,D) u  := \sum_{j\in \Z} h(x,j) \widehat{u}(j) \, e_j   
\ee
where $ D = \frac{1}{i} \partial_x $ is the H\"ormander derivative.
\begin{rem}
An operator $ h(x,D) $ as in \eqref{pseudoop} is not pseudo-differential 
unless we require some properties about the regularity in $ x $
and the growth in $ j $ of $ h(x,j)$  and its discrete derivatives 
with respect to $ j $.  However, we still find convenient to refer to $ h(x,D)$ 
as a pseudo-differential operator and call $ h(x,j) $ its symbol. 
 If $ h(x,j) = h(j) $ does not depend on $ x $ we say that $ h(D) $ is a Fourier multiplier. 
\end{rem}

The action of an operator 
$ h(x,D) $ as in \eqref{pseudoop} on Sobolev spaces 
 is controlled by the following norm. 
For any  $s , \sigma \in \R $,   we define 
\be\label{normahs}
|h|_{\sigma,s} := \sup_{j\in \Z}\  \langle j\rangle^{-\sigma} \|h(\cdot,j)\|_{s} \, .
\ee
If $|h|_{\sigma,s}$ is finite for any 
$ s \in \R $, then we say that the symbol $h(\cdot, j) $ \textit{has order} $\sigma$.
The reason is the following  lemma   about the action 
on Sobolev spaces of an operator as in \eqref{pseudoop}
that is a direct consequence of  \cite[Lemma 2.21]{Berti:2020}, writing
\[ 
  h(x, D) =  \tilde h (x, D) \langle D \rangle^\sigma  \, , 
  \qquad  \tilde h (x,j) :=  h(x, j) \langle j \rangle^{-\sigma} \, , 
\]
and the identities
\begin{equation}\label{hsigmah0}
|h|_{\sigma,s} = |
\tilde h |_{0,s} \, , \qquad
\| \langle D \rangle^\sigma u \|_s = \| u \|_{s+\sigma} \, , \quad  \forall  s \in \R \, . 
\end{equation}

\begin{lem} {\bf (Action of pseudo-differential operators)} 
\label{lem:tamepseudodiff}
Let  $\sigma \in \R $. 
For any $  s  \geq s_0>1/2$ there exist constants $ C(s) := C(s, \sigma ) > 0 $,  
non decreasing in $ s $, 
such that, for any $ u \in H^{s+\sigma}(\T)  $, 
\be\label{actiontamepse}
\|h (x,D) u \|_s \leq C(s_0) |h|_{\sigma,s_0} \| u \|_{s+\sigma} + 
C(s) 
|h|_{\sigma,s}\| u \|_{s_0+\sigma}  \, .
\ee
\end{lem}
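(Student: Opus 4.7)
The plan is exactly the one sketched in the paragraph immediately preceding the statement: reduce the case of an arbitrary order $\sigma$ to the order-zero case via the factorization $h(x,D) = \tilde h(x,D)\langle D\rangle^\sigma$ with $\tilde h(x,j) := h(x,j)\langle j\rangle^{-\sigma}$, and then invoke \cite[Lemma 2.21]{Berti:2020}, which is precisely \eqref{actiontamepse} for symbols of order zero.

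First I would verify the factorization directly from \eqref{pseudoop}: for $u \in C^\infty(\T)$,
\begin{equation*}
h(x,D)u \,=\, \sum_{j\in\Z} h(x,j)\,\widehat u(j)\,e_j \,=\, \sum_{j\in\Z} \tilde h(x,j)\,\langle j\rangle^{\sigma}\widehat u(j)\,e_j \,=\, \tilde h(x,D)\bigl(\langle D\rangle^\sigma u\bigr).
\end{equation*}
Applying \cite[Lemma 2.21]{Berti:2020} to the order-zero symbol $\tilde h$ and to the vector $v := \langle D\rangle^\sigma u$ yields
\begin{equation*}
\|\tilde h(x,D)v\|_s \,\leq\, C(s_0)\,|\tilde h|_{0,s_0}\|v\|_s + C(s)\,|\tilde h|_{0,s}\|v\|_{s_0},
\end{equation*}
with constants non-decreasing in $s$. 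Converting the two sides back through \eqref{hsigmah0}, namely $|\tilde h|_{0,\ast} = |h|_{\sigma,\ast}$ and $\|v\|_\ast = \|\langle D\rangle^\sigma u\|_\ast = \|u\|_{\ast+\sigma}$ for $\ast \in \{s_0,s\}$, produces \eqref{actiontamepse} as stated.

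There is no substantial obstacle in this form, since the analytic work is absorbed into \cite[Lemma 2.21]{Berti:2020}. If instead one wanted a self-contained argument, the main step would be the order-zero estimate, which one obtains by expanding the symbol in Fourier series in $x$, $\tilde h(x,j) = \sum_{k\in\Z}\widehat{\tilde h}(k,j)e^{ikx}$, so that the $n$-th Fourier coefficient of $\tilde h(x,D)v$ equals $\sum_{j\in\Z}\widehat{\tilde h}(n-j,j)\widehat v(j)$. Applying Cauchy--Schwarz together with the inequality $\langle n\rangle^{s}\lesssim_s \langle n-j\rangle^s + \langle j\rangle^s$, and bounding $|\widehat{\tilde h}(k,j)|\langle k\rangle^t \leq \|\tilde h(\cdot,j)\|_t \leq |\tilde h|_{0,t}$ for $t\in\{s_0,s\}$, splits the sum into two pieces controlled respectively by $|\tilde h|_{0,s_0}\|v\|_s$ and $|\tilde h|_{0,s}\|v\|_{s_0}$. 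This is exactly the strategy of the asymmetric tame product estimate \eqref{eq:tameproduct}, of which the present lemma is the natural parameter-dependent generalization.
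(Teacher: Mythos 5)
Your proposal is correct and follows exactly the route the paper takes: the paper states the lemma as a direct consequence of \cite[Lemma 2.21]{Berti:2020} via the same factorization $h(x,D)=\tilde h(x,D)\langle D\rangle^\sigma$ and the identities \eqref{hsigmah0}. Your additional sketch of a self-contained proof of the order-zero case (Fourier expansion of the symbol plus the Peetre-type inequality $\langle n\rangle^s\lesssim_s\langle n-j\rangle^s+\langle j\rangle^s$) is the standard argument behind the cited lemma and is sound, though not needed here.
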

Clearly \eqref{eq:tameproduct} is a particular case of \eqref{actiontamepse} since for a function $ h(x,j) = u(x)  $ we have $ | h |_{0,s} = \| u \|_s $ . 

We shall use the following lemma. 

\begin{lem}\label{lem:piccolosimbolo}
Let $ N \in \N $. Suppose that $ h(x,j) = 0 $ for any $ |j| \leq N $. Then, for any 
$ \sigma_1 \geq \sigma_2 $  we have
\[
| h |_{\sigma_1,s} \leq N^{\sigma_2-\sigma_1}  | h |_{\sigma_2,s} \, .
\] 
\end{lem}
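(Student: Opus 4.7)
The plan is to unfold the definition of the norm $|h|_{\sigma,s}$ given in \eqref{normahs}, exploit the hypothesis that $h(\cdot,j)$ is identically zero for $|j|\leq N$ to restrict the supremum, and then bound the factor $\langle j\rangle^{\sigma_2-\sigma_1}$ uniformly using the restriction $|j| > N$.

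More precisely, since $h(\cdot,j)=0$ whenever $|j|\leq N$, definition \eqref{normahs} yields
\[
|h|_{\sigma_1,s} = \sup_{|j|>N}\ \langle j\rangle^{-\sigma_1} \|h(\cdot,j)\|_s = \sup_{|j|>N}\ \langle j\rangle^{\sigma_2-\sigma_1}\,\langle j\rangle^{-\sigma_2} \|h(\cdot,j)\|_s.
\]
For $|j|>N$ we have $\langle j\rangle = |j| \geq N+1 > N \geq 1$, and since the exponent $\sigma_2-\sigma_1$ is non-positive the map $t\mapsto t^{\sigma_2-\sigma_1}$ is non-increasing on $(0,+\infty)$, so $\langle j\rangle^{\sigma_2-\sigma_1}\leq N^{\sigma_2-\sigma_1}$. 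Substituting this bound and passing to the supremum on the right yields
\[
|h|_{\sigma_1,s} \leq N^{\sigma_2-\sigma_1}\sup_{|j|>N}\ \langle j\rangle^{-\sigma_2} \|h(\cdot,j)\|_s \leq N^{\sigma_2-\sigma_1}\,|h|_{\sigma_2,s},
\]
which is the desired inequality.

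There is essentially no obstacle here; the only points requiring attention are verifying that the support condition on $j$ actually gives $\langle j\rangle\geq N+1>N$ (so that the inequality $\langle j\rangle^{\sigma_2-\sigma_1}\leq N^{\sigma_2-\sigma_1}$ is genuine and does not require $N\geq 1$ in disguise), and correctly tracking the direction of the inequality when the exponent is non-positive. Both are immediate from the conventions $\langle j\rangle:=\max\{1,|j|\}$ and $N\in\N$ fixed in the notation.
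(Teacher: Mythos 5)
Your proof is correct and follows exactly the same route as the paper's: restrict the supremum in \eqref{normahs} to $|j|>N$, factor out $\langle j\rangle^{\sigma_2-\sigma_1}$, and bound it by $N^{\sigma_2-\sigma_1}$ using that the exponent is non-positive and $\langle j\rangle>N$. Your extra remarks on the direction of the inequality are careful but add nothing beyond the paper's one-line computation.
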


\begin{proof}
In view of \eqref{normahs} and since $ h(x,j) = 0 $ for any $ |j| \leq N $, we have
\[
|h|_{\sigma_1,s} = \sup_{j\in \Z}\  \langle j\rangle^{-\sigma_1} \|h(\cdot,j)\|_{s}
=  \sup_{ |j|> N}\  \langle j\rangle^{\sigma_2 -\sigma_1} \langle j\rangle^{-\sigma_2} \|h(\cdot,j)\|_{s} \leq N^{\sigma_2 -\sigma_1} |h|_{\sigma_2,s}
\]
proving the lemma. 
\end{proof}

By Lemma 2.16 in \cite{Berti:2020}, the adjoint of $h(x,D)$ with respect to the $ L^2 $ scalar 
product \eqref{uvL2} has the form   
\be\label{adjhD}
\big ( h(x,D)\big)^* = h^* (x,D)
\ee
where the symbol  $ h^* (x,j)$
(for its explicit formula see Equation (2.31) in \cite{Berti:2020})
satisfies:  there exists $ C > 0 $ such that, for any  $s \in \R $,   
\begin{equation}\label{eq:tamekappa0}
| h^*  |_{0,s} \leq  C | h|_{0,s+s_0} \, . 
\end{equation}

{\bf Estimates of special symbols.} 
We now estimate the norm of a class of symbols used frequently throughout the paper.

\begin{lem}\label{lem:Sobosimbol} 
Let 
$\ell: \R \to \C$ be a $ C^\infty $ function   satisfying 
\begin{equation}\label{eq:1lemma1s2}
\| \ell \|_{\sigma,m} := \sup_{y \in \R} \, \langle y \rangle ^{-(\sigma-m)} |\ell^{(m)}(y)| < +\infty \, 
, \quad \forall m\in \N_0 \,, 
\end{equation} 
where $\ell^{(m)}$ denotes the $m$-th derivative of $\ell$. Let $ A_* > 0  $ and 
$ A :\T\to (0,+\infty)$ be a  $ C^\infty $-function of the form $ A (x) = A_* + a(x) $ 
with $ \| a \|_{C^0} \leq A_* / 2 $ and $ \| a \|_{1} \leq 1 $.      
Then  
\begin{equation}\label{eq:symbolh}
h:\T\times \Z\to \C \, , \quad h(x,j) := \ell (j A(x))  \, , 
\end{equation}
is a  symbol  of order $ \sigma $
satisfying, for any $s \geq 0  $, 
\begin{equation}\label{eq:2lemma2s2}
|h|_{\sigma,s} \leq C(s, A_*, \ell) 
 (1+\|a\|_{s}) 
\end{equation} 
where $ C(s, A_*, \ell) > 0 $ are positive constants non-decreasing in $ s $. 
\end{lem}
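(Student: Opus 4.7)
The plan is to reduce the estimate to Moser's composition inequality via a rescaling that absorbs the $j$-dependence. Since $\|a\|_{C^0}\leq A_*/2$, the function $A(x)=A_*+a(x)$ takes values in the interval $I_*:=[A_*/2,3A_*/2]$. For $y\in I_*$ and $|j|\geq 2/A_*$ one has $|jy|\geq 1$, so $\langle jy\rangle=|jy|\in[|j|A_*/2,\,3|j|A_*/2]$, while for $|j|<2/A_*$ both $\langle jy\rangle$ and $\langle j\rangle$ are bounded in terms of $A_*$. Hence
\[
\langle jy\rangle\sim_{A_*}\langle j\rangle\qquad\text{uniformly for }y\in I_*,\ j\in\Z.
\]

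Introduce the rescaled family $\tilde\ell_j(y):=\langle j\rangle^{-\sigma}\ell(jy)$. Differentiating and using the growth hypothesis \eqref{eq:1lemma1s2} together with the previous equivalence,
\[
|\tilde\ell_j^{(m)}(y)|=\langle j\rangle^{-\sigma}|j|^m|\ell^{(m)}(jy)|\leq \|\ell\|_{\sigma,m}\,\langle j\rangle^{-\sigma}|j|^m\langle jy\rangle^{\sigma-m}\leq C(m,A_*,\ell)
\]
for every $y\in I_*$ and every $j\in\Z$. Thus the functions $\tilde\ell_j$, extended to $\R$ as smooth functions with derivatives controlled uniformly in $j$ by constants depending only on $m$, $A_*$ and $\ell$, form a bounded family in $C^\infty(I_*)$. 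Since $h(x,j)=\langle j\rangle^\sigma\,\tilde\ell_j(A(x))$, it remains only to estimate $\|\tilde\ell_j\circ A\|_s$ uniformly in $j$.

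For this I apply the standard Moser composition estimate in $H^s(\T)$: if $F\in C^\infty$ has its $C^k$--norms on $I_*$ bounded by constants $M_k$ and $A=A_*+a$ with $a$ valued in $[-A_*/2,A_*/2]$, then $\|F\circ A\|_s\leq C(s,A_*,M_{\lceil s\rceil})(1+\|a\|_s)$. Applying this with $F=\tilde\ell_j$ and using the uniform bound above yields $\|\tilde\ell_j\circ A\|_s\leq C(s,A_*,\ell)(1+\|a\|_s)$. Multiplying by $\langle j\rangle^\sigma$, taking the supremum over $j$ in \eqref{normahs}, and observing that the constants are non-decreasing in $s$, gives \eqref{eq:2lemma2s2}. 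The main obstacle — and the reason the lemma is not trivial — is the uniformity in $j$: for large $|j|$ the factor $|j|^m$ produced by differentiating $\ell(jA(x))$ must be cancelled by the decay $\langle jy\rangle^{\sigma-m}$ of $\ell^{(m)}$, and this cancellation hinges entirely on the fact that $A$ stays bounded away from zero, i.e.\ on the smallness assumption $\|a\|_{C^0}\leq A_*/2$ encoded in the hypothesis $\|a\|_1\leq 1$ (via the Sobolev embedding $H^1\hookrightarrow C^0$).
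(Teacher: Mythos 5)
Your proof is correct, but it takes a genuinely different route from the paper's. You rescale, setting $\tilde\ell_j(y)=\langle j\rangle^{-\sigma}\ell(jy)$, observe that $\langle jy\rangle\sim_{A_*}\langle j\rangle$ on the range of $A$ so that the family $\{\tilde\ell_j\}_j$ is bounded in every $C^k$ of (a neighbourhood of) $[A_*/2,3A_*/2]$ uniformly in $j$, and then invoke the standard Moser composition estimate as a black box. The paper instead re-runs the Moser induction by hand: it proves, by induction on integer Sobolev thresholds $s$ and simultaneously for all $m$, the claim $\|\ell^{(m)}(jA(\cdot))\|_s\lesssim_{s,m}(1+\|a\|_s)\langle j\rangle^{\sigma-m}$, using $\partial_x\bigl(\ell^{(m)}(jA)\bigr)=j\,\ell^{(m+1)}(jA)\,a_x$ together with the tame product estimate \eqref{eq:tameproduct} and the interpolation \eqref{sumabab}; the factor $j$ from the chain rule is absorbed by the extra decay of $\ell^{(m+1)}$. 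Both arguments rest on exactly this cancellation, but your packaging is more modular and arguably cleaner, at the cost of needing a version of Moser's theorem in which the constant is tracked explicitly through finitely many $C^k$-norms of $F$ and through $\|a\|_{s_0}$ (this last dependence is where $\|a\|_1\leq 1$ enters, a point you leave implicit), plus a smooth extension of $\tilde\ell_j$ beyond $I_*$, which is harmless since the uniform bounds hold on a slightly larger interval. One small inaccuracy: the hypothesis $\|a\|_{C^0}\leq A_*/2$ is assumed separately in the lemma and is not a consequence of $\|a\|_1\leq 1$ via Sobolev embedding, so your closing parenthetical should be dropped; this does not affect the argument.
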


\begin{proof}
Recalling \eqref{normahs} and \eqref{eq:symbolh}, 
the bound \eqref{eq:2lemma2s2} follows by  the next claim (for $ m = 0 $): 
\\[1mm]
{\sc Claim} : {\it for any $ m \in \N_0 $, any $ s \geq 0 $, 
there exists $ C_{s,m,A_*,\ell} > 0 $ such that }
\be\label{lemax}
\| \ell^{(m)} ( j (A(x)))\|_s \leq C_{s,m,A_*,\ell} (1 + \| a \|_s)  \langle j \rangle^{\sigma-m} \, ,
\quad \forall j \in \Z \, . 
\ee
We prove  \eqref{lemax}  by induction. 
In the sequel we use that, 
since $ \| a \|_{C^0} < A_* /2  $ then $ A(x) = A_* + a(x) $ satisfies $  \frac{A_*}{2}  < A(x) <  \frac{3 A_*}{2}  $ for any $ x \in \T $.  
\\[1mm]
{\it Initialization.} For $ s = s_0 = 1 $ we have 
\[
\begin{aligned}
\| \ell^{(m)} ( j (A(x)))\|_{1} 
& \sim \| \ell^{(m)} ( j (A(x)))\|_{L^2} + \| \pa_x \big( \ell^{(m)} ( j (A(x)))\big) \|_{L^2}  \\
& \lesssim \| \ell^{(m)} ( j (A(x)))\|_{C^0} + |j| \| \ell^{(m+1)} ( j (A(x))) \|_{C^0} \| a_x   \|_{L^2}  \\
& \stackrel{\eqref{eq:1lemma1s2}} {\lesssim_{m,s,A_*, \ell}} 
\langle j \rangle^{\sigma-m} (1 + \| a   \|_{1})
\lesssim_{m,s,A_*, \ell} 
\langle j \rangle^{\sigma-m} 
\end{aligned}
\]
because $ \| a \|_1 \leq 1 $. Thus, 
for any $m \in \N_0 $, any $ s \in [0,s_0] $ we have that   
\[ \| \ell^{(m)} ( j (A(x)))\|_{s} \leq \| \ell^{(m)} ( j (A(x)))\|_{1}   \leq C_{s,m,A_*, \ell} \langle j \rangle^{\sigma-m}  \leq  C_{s,m,A_*, \ell} \langle j \rangle^{\sigma-m} (1+ \| a \|_s ),\]
 which is 
\eqref{lemax} for any $ s \in [0,s_0] $. 
\\[1mm]
{\it Induction:} Given some integer $ k \geq s_0 = 1 $ we assume that \eqref{lemax}
holds for any $ m \in \N_0 $ and any $ s \in [0,k] $. We are going to prove \eqref{lemax}
for any $ s \in (k,k+1] $. 
We have 
\begin{align}
 \| \ell^{(m)} ( j (A(x)))\|_{s} & \sim_s
  \| \ell^{(m)} ( j (A(x)))\|_{L^2} + \big\| \pa_x \big( \ell^{(m)} ( j (A(x)))\big) \big\|_{s-1} \notag  \\
  & \lesssim_s
  \| \ell^{(m)} ( j (A(x)))\|_{C^0} 
  + |j| \| \big( \ell^{(m+1)} ( j (A(x)))\big) a_x (x) \|_{s-1}  \notag  \\
    & \lesssim_{s,m,\ell, A_*} \langle j \rangle^{\sigma-m}
  + |j| \| \big( \ell^{(m+1)} ( j (A(x)))\big) a_x (x) \|_{s-1} \label{pripez1}
  \end{align}
by \eqref{eq:1lemma1s2} and since   $  \frac{A_*}{2}  < A(x) <  \frac{3 A_*}{2}  $. 
  To estimate the last term in \eqref{pripez1} we distinguish two cases.

\vspace{2mm}

{\it 1st case:} $ k = s_0 = 1 $. Thus $ s \in (s_0,s_0+1]$ and $ s -1 \in (s_0-1,s_0] $ 
 and 
 the inequality 
 \eqref{intermba} implies 
 that \eqref{pripez1} is bounded by  
 \[
 \begin{aligned}
  \| \ell^{(m)} ( j (A(x)))\|_{s} 
&  \lesssim_{s,m,A_*,\ell} \langle j \rangle^{\sigma-m} + 
    |j| \| \big( \ell^{(m+1)} ( j (A(x)))\big)\|_{s_0} \| a_x \|_{s-1} \\
 &   \stackrel{\eqref{lemax}} {\lesssim_{s,m,A_*,\ell}} \langle j \rangle^{\sigma-m} +   |j| \langle j \rangle^{\sigma-m-1} (1+ \| a \|_{s_0}) \| a \|_{s}  \\
&  \lesssim_{s,m,A_*,\ell} \langle j \rangle^{\sigma-m} ( 1 +  \| a \|_{s})    
 \end{aligned} 
 \]
 since $ \| a \|_{s_0} \leq 1 $. This 
  proves \eqref{lemax} for $ s \in (s_0,s_0+1] $.

\vspace{2mm}

{\it 2nd case:} $ k \geq s_0 + 1 = 2 $. For any $ s \in (k,k+1]$ we have 
 $ s - 1 \geq s_0 $  and by \eqref{pripez1} 
 and  \eqref{eq:tameproduct} we get  
  \begin{align}
   \| \ell^{(m)} ( j (A(x)))\|_{s}  
  &  \stackrel{\eqref{eq:1lemma1s2} } {\lesssim_{s,m,A_*,\ell}}
\langle j \rangle^{\sigma - m }  + 
  |j| \big\| \big( \ell^{(m+1)} ( j (A(x)))\big) \big\|_{s_0} \| a_x  \|_{s-1} +  
  |j| \big\| \big( \ell^{(m+1)} ( j (A(x)))\big) \big\|_{s-1} \| a_x  \|_{s_0} \notag  \\ 
  & \stackrel{\eqref{lemax}, \| a \|_{s_0} \leq 1 } {\lesssim_{s,m,A_*,\ell}}
\langle j \rangle^{\sigma - m }  + 
  |j|  \langle j \rangle^{\sigma - m - 1}  \| a  \|_{s} +  
  |j|  \langle j \rangle^{\sigma - m - 1} (1+ \| a \|_{s-1}) \| a  \|_{s_0+1} \,. \label{pezz2c}
  \end{align}
  Finally by \eqref{sumabab} we have $\| a \|_{s-1} \| a  \|_{s_0+1}
  \leq  \| a \|_{s} \| a  \|_{s_0}$ and, since $ \| a  \|_{s_0} \leq 1 $, 
  we deduce  by \eqref{pezz2c} 
  that $   \ell^{(m)} ( j (A(x)))   $ satisfies
 \eqref{lemax} for $ s \in (k, k+1]  $. 
 
 This concludes the inductive proof of the claim and thus of the lemma. 
\end{proof}

\vspace{1mm}

{\bf Composition operators under diffeomorphism.} Given a diffeomorphism of $ \T $,
$ x \mapsto x + p(x) $, where $ p(x)$ is a $   2 \pi  $-periodic real function, we define
$ y \mapsto y + \breve p (y)  $ its inverse diffeomorphism of $ \T $, and 
 the associated composition operators 
\be\label{defcalP}
(\mathcal P u) (x) := u(x+p(x)) \, , \quad (\mathcal P^{-1}v)(y) := v (y+\breve p(y)) \, . 
\ee
For $ k \in \N_0 $ we denote 
$ \| u \|_{C^k} := \max_{0 \leq m \leq k} \| u^{(m)} \|_{C^0} $ where  
$ \| u  \|_{C^0} := \sup_{x \in \T} |u(x)| $.

\begin{lem}\label{lem:tamecomposition} {\bf (Diffeomorphism of $ \T$)}
\cite[Lemma 2.30]{Berti:2020}
Assume $ \| p \|_{C^{s_0+1}} < 1 / 2 $ with $ s_0 = 1 $. Then
$ x \mapsto x + p(x) $ is a diffeomorphism of $ \T $ and
\begin{enumerate}
\item 
$ \| \breve p \|_s \leq C(s) \| p \|_s $ for any $ s \geq s_0 $;
\item for any $ s \geq s_0+1 $,  there exist constants $ C(s) > 0 $, non-decreasing in $ s $, 
such that, for any $ u \in H^s(\T)  $, 
\begin{equation}\label{eq:tamecomposition}
\|\mathcal P u \|_s + \|\mathcal P^{-1}u\|_s \leq C(s) \|u\|_s + 
C(s_0) \| p \|_s \|u\|_{s_0+1}  \, .
\end{equation}
\end{enumerate}
\end{lem}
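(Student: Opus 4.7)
The plan is to prove both parts simultaneously by induction on integer Sobolev regularity, and then extend to real $s$ by the interpolation inequality \eqref{sumabab}. The assumption $\|p\|_{C^{s_0+1}}<1/2$ with $s_0=1$ gives $\|p'\|_{C^0}<1/2$, so $\psi(x):=x+p(x)$ satisfies $\psi'\geq 1/2$ on $\T$ and, together with $\psi(x+2\pi)=\psi(x)+2\pi$, lifts to a strictly increasing $C^\infty$ diffeomorphism of $\T$ with inverse $\psi^{-1}(y)=y+\breve p(y)$. Differentiating the defining identity $\breve p(y)+p(y+\breve p(y))=0$ would give the pointwise formula
\begin{equation}\label{ppbreve}
\breve p'(y)\;=\;-\,\frac{(\mathcal P^{-1}p')(y)}{1+(\mathcal P^{-1}p')(y)}\,,
\end{equation}
whose denominator is uniformly bounded away from zero. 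The change of variables $y=\psi(x)$ (Jacobian $1+p'\in[1/2,3/2]$) handles the base case:
\[
\|\mathcal P u\|_{L^2}+\|\mathcal P^{-1}u\|_{L^2}\;\lesssim\;\|u\|_{L^2}\,,\qquad \|\breve p\|_{L^2}=\|\mathcal P^{-1}p\|_{L^2}\;\lesssim\;\|p\|_{L^2}\,.
\]

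For the inductive step on part (2), I would iterate the chain-rule identity $\partial_x(\mathcal P u)=(1+p')\,\mathcal P(u')$, which by Faà di Bruno expresses $\partial_x^k(\mathcal P u)$ as a sum of terms of the form $Q_k(p',\dots,p^{(k)})\,\mathcal P(u^{(j)})$ with $j\leq k$ and $Q_k$ a fixed universal polynomial. Bounding each factor with the asymmetric tame product estimate \eqref{eq:tameproduct} (always placing the low-derivative factors and $\mathcal P(u^{(j)})$ in the $s_0$-slot, and keeping the highest-derivative $p$-factor in the $s$-slot) yields, by induction on integer $s\geq s_0+1$, the desired bound
\[
\|\mathcal P u\|_s\;\leq\;C(s)\|u\|_s+C(s_0)\|p\|_s\|u\|_{s_0+1}\,.
\]
The $C(s_0)$ on the rough factor arises precisely from the $s_0$-slot in \eqref{eq:tameproduct} applied to the term where all $k$ derivatives fall on $p$. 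Part (1) would then run in parallel: starting from \eqref{ppbreve}, the tame estimate on $\mathcal P^{-1}$ already proved at level $s-1$, the algebra estimate \eqref{eq:tameproduct} applied to the Neumann expansion of $(1+\mathcal P^{-1}p')^{-1}$, and the $L^2$ bound on $\breve p$ give $\|\breve p\|_s\leq C(s)\|p\|_s$. Once $\breve p$ is controlled at the right regularity, the tame estimate for $\mathcal P^{-1}$ follows by applying the estimate for $\mathcal P$ with $p$ replaced by $\breve p$. Finally, real-valued $s\geq s_0+1$ are recovered by interpolation \eqref{sumabab} between consecutive integer values.

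\textbf{Main obstacle.} The principal issue is the coupling between parts (1) and (2): controlling $\breve p$ in $H^s$ requires a tame bound on $\mathcal P^{-1}$, whereas the cleanest tame bound on $\mathcal P^{-1}$ at level $s$ itself wants estimates on $\breve p$ at the same level. The way around is to advance both statements one integer step at a time in a coupled induction, so that at each stage only the already-proved lower-regularity version of the companion bound is invoked. A secondary but crucial technicality is to respect the asymmetric form of \eqref{eq:tameproduct} at every application: the low-derivative factors must be systematically placed in the $s_0$-slot in each Faà di Bruno term, otherwise the induction produces an $s$-dependent constant in front of $\|p\|_s\|u\|_{s_0+1}$ rather than the required $C(s_0)$.
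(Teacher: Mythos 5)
The paper does not actually prove this lemma: it is imported verbatim from \cite[Lemma 2.30]{Berti:2020}, so there is no internal proof to compare against. Your plan is the standard route to such composition estimates (inverse function theorem for the circle diffeomorphism, the identity $\breve p=-\mathcal P^{-1}p$ and \eqref{ppbreve}, a coupled induction on integer $s$ via the chain rule, interpolation for real $s$), and in outline it is sound; you also correctly identify the two genuine pressure points, namely the circular dependence between the bounds for $\breve p$ and for $\mathcal P^{-1}$, and the asymmetry of the constants.

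One step is glossed over, though, and it is exactly the delicate one. You attribute the $s$-independent constant $C(s_0)$ in front of $\|p\|_s\|u\|_{s_0+1}$ solely to the term where all derivatives fall on $p$, handled by slot placement in \eqref{eq:tameproduct}. But the inductive step itself does not close in the stated form: applying the level-$(s-1)$ hypothesis to $\partial_x(\mathcal Pu)=(1+p')\,\mathcal P(u')$ produces a term $\|p\|_{s-1}\|u'\|_{s_0+1}=\|p\|_{s-1}\|u\|_{s_0+2}$, and the intermediate Fa\`a di Bruno terms (whose number and combinatorial coefficients grow with $s$) likewise produce mixed products that are neither $\|u\|_s$ times low norms nor $\|p\|_s\|u\|_{s_0+1}$. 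These must be reduced to the required two slots via the asymmetric interpolation inequality \eqref{asintpo} with a small parameter $\epsilon=\epsilon(s)$, dumping all $s$-dependent growth into the $C(s)\|u\|_s$ term --- the same device the paper uses in Lemma \ref{lemR2}. Relatedly, the passage to non-integer $s$ is not literally an application of \eqref{sumabab} (which interpolates norms of a fixed function, not operator bounds); it is cleaner to run the induction directly for real $s$ using $\|v\|_s\sim\|v\|_{L^2}+\|\partial_x v\|_{s-1}$, as the paper does in the proof of Lemma \ref{lem:Sobosimbol}. Neither point is fatal, but both must be made explicit for the claimed form of \eqref{eq:tamecomposition} to follow.
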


\begin{rmk}\label{r:nona}
Composition operators do not satisfy asymmetric tame estimates in which the constant in front of $\|u\|_s$ is independent of $s$. Indeed, for any $ s \in \N $, the $ s $-th derivative of $\mathcal Pu$ is 
\[ (\mathcal Pu)^{(s)}(x) = u^{(s)}(x+p(x))  (1+ p'(x))^s + ...   \]
and
the $ L^2 $-norm  
$ \| u^{(s)}(x+p(x)) 
(1+ p'(x))^s\|_{L^2} 
$
provides a contribution to \eqref{eq:tamecomposition}
with constant $C(s)$  exponentially increasing in $s$. \end{rmk}

\begin{rmk}
Asymmetric tame estimates  for   the composition operator, as well as of any FIO, 
can be obtained
by allowing an arbitrarily small loss of derivatives. 
More precisely, for any 
$\delta>0 $, 
by  \eqref{eq:tamecomposition} and the interpolation inequality \eqref{asintpo} with $ v = 1 $, $ s_1 = s_0 + 1 $, $ q_1 = s- s_0 - 1 $, $ q_2 = \delta $,  taking $\epsilon=1/C(s)$,  
 there are constants $C(s)>0$, non-decreasing in $s$,  such that,
for any $ s \geq s_0 + 1 $, $s\in \N$, and $ u \in H^s(\T) $, 
\[
\|\mathcal P u\|_s 
 \leq \|u\|_{s+\delta} + C(s) (1+\| p \|_s) \|u\|_{s_0+1}  \, .
\]
Observations of this kind 
allow us to use the Nash--Moser implicit function theorem in the smooth category, see Lemma \ref{lemR2}.
\end{rmk}

The adjoint  with respect to the $ L^2 $-scalar 
product \eqref{uvL2} of the composition operator $ {\mathcal P} $ in \eqref{defcalP} is 
\be\label{adjcompo}
\mathcal P^* =\mathcal P^{-1}\circ \frac{1}{1+p'}  
\ee
where  $ \frac{1}{1+p'}$ is the multiplication operator by the function $ \frac{1}{1+p'} $.  

\vspace{1mm}

{\bf Fourier Series/Integral Operators.}
We shall encounter  also  \textit{Fourier Series/Integral Operators} (FIO for short)  of the form 
\begin{equation}\label{eq:EApp}
Eu := \sum_{j\in \Z} h(x,j) \widehat{u}(j) e^{ i j (x+ p(x))}   \,. 
\end{equation}
If $ p (x) = 0 $ then $ E $ reduces to the pseudo-differential operator $ h(x,D)$ in \eqref{pseudoop}, 
whereas for $ h(x,j) = 1 $
it reduces to the composition operator $ {\mathcal P} $ in \eqref{defcalP}. 
In general, note that we may write 
\begin{equation}\label{eq:mathcalEcomposition}
E =  \mathcal P \circ \breve h (x,D) 
\end{equation}
as the composition of the diffeomorphism $ {\mathcal P} $ in \eqref{defcalP} and the 
operator $ \breve h (x,D) $
where, for any $ j \in \Z $,  
\begin{equation}\label{eq:htilde}
\breve h(y,j):= (\mathcal P^{-1}h)(y,j) \, . 
\end{equation}
The adjoint of $ E $ in \eqref{eq:EApp} with respect to the 
$ L^2 $ scalar product in \eqref{uvL2} is 
\begin{equation}\label{eq:E*App}
E^*u = \sum_{k\in \Z} \left ( \int_\T u(x) \overline{h(x,k)} e^{-i k (x+p(x))}\, \diff x \right )  e_k \, .
\end{equation}
Notice that, in view of \eqref{eq:mathcalEcomposition} 
and
 \eqref{adjcompo}, we can also write the adjoint   $ E^* $ as
\begin{equation}\label{eq:mathcalE*}
 E^*= 
\big (\breve h (x,D)\big )^*   \circ \mathcal P^{-1}\circ \frac{1}{1+p'} 
\end{equation}
which is the composition of a multiplication operator, 
a composition operator and a pseudo-differential one, cfr.~\eqref{adjhD}. 

We now prove tame estimates in Sobolev spaces of FIO operators and their adjoints.

\begin{lem}\label{lem:Eusmall}
{\bf (FIO)}
Let $ \sigma \in \R $ 
and   assume $ \| p \|_{C^{s_0+1}} < 1 / 2 $ with $s_0 = 1 $. 
Then 
the FIO operator $ E $  in \eqref{eq:EApp} satisfies the tame estimates, for any $ s \geq s_0 + 1 $,
\be\label{stimaEu}
\| E u \|_s \leq   C(s) |h|_{\sigma,s_0+1}  \| u \|_{s+\sigma}  +
C(s) 
\Big( |h|_{\sigma,s} + \| p \|_s |h|_{\sigma,s_0+1}  \Big) \| u \|_{s_0+ \sigma+1} 
\ee
where 
$ C(s) := C(s, \sigma)  > 0 $  are constants  non-decreasing in $ s $.  
\end{lem}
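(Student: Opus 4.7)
The plan is to exploit the factorization \eqref{eq:mathcalEcomposition} writing $E = \mathcal P \circ \breve h(x,D)$ and then chain together the already-established tame estimates for the composition operator $\mathcal P$ (Lemma \ref{lem:tamecomposition}) and for pseudo-differential operators (Lemma \ref{lem:tamepseudodiff}). First, I would apply \eqref{eq:tamecomposition} to $\breve h(x,D) u$, obtaining
\[
\|E u\|_s \;\leq\; C(s)\,\|\breve h(x,D) u\|_s \;+\; C(s_0)\,\|p\|_s\,\|\breve h(x,D) u\|_{s_0+1}\,.
\]
Then I would bound both $\|\breve h(x,D)u\|_s$ and $\|\breve h(x,D) u\|_{s_0+1}$ by \eqref{actiontamepse}, taking the auxiliary threshold to be $s_0+1$ (which is allowed since $s_0+1>1/2$), so that the low-regularity control of $u$ appears at the level $\|u\|_{s_0+\sigma+1}$ matching the statement.

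The remaining step is to control $|\breve h|_{\sigma,s}$ by $|h|_{\sigma,s}$ modulo the geometry of $p$. Since $\breve h(\cdot,j) = \mathcal P^{-1} h(\cdot,j)$, I would apply \eqref{eq:tamecomposition} frequency by frequency, together with item $(1)$ of Lemma \ref{lem:tamecomposition} to replace $\|\breve p\|_s$ by $C(s)\|p\|_s$, and divide by $\langle j\rangle^{\sigma}$ before taking the supremum in $j$. This yields
\[
|\breve h|_{\sigma,s} \;\leq\; C(s)\,|h|_{\sigma,s} \;+\; C(s)\,\|p\|_s\,|h|_{\sigma,s_0+1}\,,
\]
and in particular $|\breve h|_{\sigma,s_0+1}\leq C\,|h|_{\sigma,s_0+1}$ thanks to the smallness hypothesis $\|p\|_{C^{s_0+1}}<1/2$, which also implies $\|p\|_{s_0+1}=\mathcal O(1)$.

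Plugging these symbol bounds into the composition-plus-pseudodifferential chain and collecting terms, one gets a contribution $C(s)|h|_{\sigma,s_0+1}\|u\|_{s+\sigma}$ from the first slot of \eqref{actiontamepse} together with the $C(s)$ factor of the outer $\mathcal P$, and a contribution $C(s)(|h|_{\sigma,s}+\|p\|_s|h|_{\sigma,s_0+1})\|u\|_{s_0+\sigma+1}$ from the three remaining terms (namely the high-regularity $|\breve h|_{\sigma,s}$ part, the low-regularity piece weighted by $\|p\|_s$, and the cross-term produced by substituting the bound for $|\breve h|_{\sigma,s_0+1}$). This is exactly \eqref{stimaEu}. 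The main (though routine) point requiring some care is simply the bookkeeping needed to ensure that no extraneous factor of $|h|_{\sigma,s}\,\|p\|_s$ arises at the leading order and that the $s$-dependent constant in front of $\|u\|_{s+\sigma}$ depends \emph{only} on $|h|_{\sigma,s_0+1}$; this is achieved precisely by keeping the first slot of Lemma \ref{lem:tamepseudodiff} anchored at the low Sobolev index $s_0+1$ when estimating $\|\breve h(x,D)u\|_s$.
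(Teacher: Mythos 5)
Your proposal follows exactly the paper's argument: factor $E=\mathcal P\circ\breve h(x,D)$ via \eqref{eq:mathcalEcomposition}, bound $|\breve h|_{\sigma,s}$ by applying \eqref{eq:tamecomposition} frequency by frequency (this is the paper's estimate \eqref{sthbreve}), and then chain \eqref{eq:tamecomposition} with \eqref{actiontamepse}, anchoring the low-norm slot so that only $|h|_{\sigma,s_0+1}$ multiplies $\|u\|_{s+\sigma}$. The proof is correct and essentially identical to the one in the paper.
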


\begin{proof}
By  \eqref{eq:mathcalEcomposition} we have 
$ E =   \mathcal P \circ \breve h (x,D)  $ where $ \breve h  $ in \eqref{eq:htilde}
satisfies for any $ s \geq s_0 + 1 $  
\be\label{sthbreve}
\begin{aligned}
| \breve h |_{\sigma,s} 
 \stackrel{\eqref{normahs}} {:=} \sup_{j \in \Z}  \, \langle j \rangle^{-\sigma} 
\| (\mathcal P^{-1}h)(\cdot ,j) \|_s &  \stackrel{\eqref{eq:tamecomposition}} \leq  
\sup_{j \in \Z}  \, \langle j \rangle^{-\sigma} 
 \Big( C(s) \| h (\cdot ,j) \|_s + C(s_0) \| p \|_s \| h (\cdot, j)\|_{s_0+1} \Big) \\
 &  \leq   C(s) | h |_{\sigma,s} + C(s_0) \| p \|_s | h |_{\sigma,s_0+1} \, . 
 \end{aligned}
\ee
In particular $ | \breve h |_{\sigma,s_0+1} \lesssim_{s_0} | h |_{\sigma,s_0+1} $ 
being  $ \| p \|_{C^{s_0+1}} \leq 1/ 2 $.  
Then the estimate \eqref{stimaEu} for $ E =  \mathcal P \circ \breve h (x,D) $ 
 follows by \eqref{eq:tamecomposition},  for any $ s \geq s_0 + 1 $, 
\begin{align} 
\| E u \|_s 
& \leq C(s) \| \breve h (x,D) u \|_s + C(s_0) \| p \|_s \| \breve h (x,D) u \|_{s_0+1}  \notag \\
& \stackrel{\eqref{actiontamepse}} 
\leq C (s) \Big[ |\breve h|_{\sigma,s}\| u \|_{s_0+\sigma} +  | \breve h|_{\sigma,s_0}
\| u \|_{s+\sigma}  \Big]+ C(s_0) \| p \|_s 
 \Big[ |\breve h|_{\sigma,s_0+1}\| u \|_{s_0+\sigma} +  | \breve h|_{\sigma,s_0}
\| u \|_{s_0+1+\sigma}  \Big] \, . 
  \notag \\
  & \stackrel{\eqref{sthbreve}} 
\leq  C (s) \big( | h |_{\sigma,s} + \| p \|_s | h |_{\sigma,s_0+1} \big) 
\| u \|_{s_0+\sigma} +   C (s) |  h|_{\sigma,s_0+1}
\| u \|_{s+\sigma} + C(s_0) \| p \|_s  | h|_{\sigma,s_0+1}
\| u \|_{s_0+1+\sigma}   
  \notag 
 \end{align}
proving \eqref{stimaEu}. 
\end{proof}

The adjoint of the operator $ E $ in \eqref{eq:EApp} satisfies the following 
tame estimates.

\begin{lem}\label{lem:Estargene}
{\bf (Adjoint of FIO)}
Let $ \sigma \in \R $ and assume $ \| p \|_{C^{s_0+1}} < 1 / 2 $ with $ s_0 = 1 $,  and
$ \| p \|_{s_0+2} \leq 1 $. 
Then the adjoint operator $ E^* $ in \eqref{eq:E*App} satisfies the tame estimates, 
  for any $ s \geq \max\{1, s_0 + 1 - \sigma \}  $
 \be\label{Estar}
\| E^* u \|_s \leq     
C(s) |h|_{\sigma,s_0+1}   \| u \|_{s+\sigma}  +
C(s) 
\Big( |h|_{\sigma,s+\sigma+s_0}  + |h|_{\sigma, s_0+1}  \| p \|_{s+\sigma+
s_0}  \Big)\| u \|_{s_0+1}  
\ee
where $ C(s) := C(s, \sigma ) > 0 $  are constants  non-decreasing in $ s $. 
\end{lem}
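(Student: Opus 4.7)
The plan is to exploit the factorization \eqref{eq:mathcalE*}, which presents $E^*$ as the composition of three factors whose tame behaviour is already understood: the multiplication by $1/(1+p')$, the composition $\mathcal P^{-1}$, and the adjoint pseudo-differential operator $(\breve h(x,D))^*$. We estimate $\|E^*u\|_s$ by propagating through these three factors, innermost first. The constraint $s\geq\max\{1,s_0+1-\sigma\}$ is exactly what guarantees $s+\sigma\geq s_0+1$, so that Lemma \ref{lem:tamecomposition} and the pseudo-differential estimate \eqref{actiontamepse} apply at the correct level after the shift by $\sigma$.

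First, writing $\breve h(x,D)=\tilde{\breve h}(x,D)\langle D\rangle^\sigma$ with $\tilde{\breve h}(x,j):=\breve h(x,j)\langle j\rangle^{-\sigma}$, the adjoint factorizes as $(\breve h(x,D))^*=\langle D\rangle^\sigma\,\tilde{\breve h}^*(x,D)$ with $\tilde{\breve h}^*$ of order $0$. Using \eqref{hsigmah0}, this gives $\|E^*u\|_s=\|\tilde{\breve h}^*(x,D)\,w\|_{s+\sigma}$, where $w:=\mathcal P^{-1}\bigl(\tfrac{u}{1+p'}\bigr)$. Applying Lemma \ref{lem:tamepseudodiff} at order $0$ and regularity $s+\sigma$ yields
\[
\|\tilde{\breve h}^*(x,D)\,w\|_{s+\sigma}\leq C(s_0)\,|\tilde{\breve h}^*|_{0,s_0}\,\|w\|_{s+\sigma}+C(s)\,|\tilde{\breve h}^*|_{0,s+\sigma}\,\|w\|_{s_0}.
\]
The two Sobolev norms of $w$ are controlled by \eqref{eq:tamecomposition} followed by Moser-type product estimates for multiplication by $(1+p')^{-1}$: the smallness of $\|p\|_{C^{s_0+1}}$ and of $\|p\|_{s_0+2}$ absorbs the quadratic terms and produces
\[
\|w\|_{s+\sigma}\leq C(s)\,\|u\|_{s+\sigma}+C(s)\,\|p\|_{s+\sigma}\,\|u\|_{s_0+1},\qquad \|w\|_{s_0}\leq C\,\|u\|_{s_0+1}.
\]

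Next, the symbol norms of $\tilde{\breve h}^*$ are converted to symbol norms of $h$ by the same chain used in Lemma~\ref{lem:Eusmall}: the inequality \eqref{eq:tamekappa0} gives $|\tilde{\breve h}^*|_{0,r}\lesssim |\tilde{\breve h}|_{0,r+s_0}=|\breve h|_{\sigma,r+s_0}$, and \eqref{sthbreve} then yields
\[
|\breve h|_{\sigma,r}\leq C(r)\,|h|_{\sigma,r}+C(s_0)\,\|p\|_{r}\,|h|_{\sigma,s_0+1}.
\]
Taking $r=s_0$ and using smallness of $\|p\|_{s_0+1}$ one gets $|\tilde{\breve h}^*|_{0,s_0}\leq C\,|h|_{\sigma,s_0+1}$, while taking $r=s+\sigma$ produces $|\tilde{\breve h}^*|_{0,s+\sigma}\leq C(s)\bigl(|h|_{\sigma,s+\sigma+s_0}+\|p\|_{s+\sigma+s_0}\,|h|_{\sigma,s_0+1}\bigr)$. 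Substituting these into the display above, and noting that $\|p\|_{s+\sigma}\leq\|p\|_{s+\sigma+s_0}$ allows the first term's remainder to be absorbed into the second, yields exactly \eqref{Estar}.

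The only delicate point is bookkeeping: one must track the $\sigma$-shift carefully so that the composition estimate \eqref{eq:tamecomposition} is invoked only at levels $\geq s_0+1$ (which forces the threshold $s\geq s_0+1-\sigma$), and one must pick where to apply the asymmetric tame estimates \eqref{eq:tameproduct}, \eqref{intermba} so that the low-norm constant depends only on $\|u\|_{s_0+1}$ rather than on $\|u\|_s$. The smallness assumptions $\|p\|_{C^{s_0+1}}<1/2$ and $\|p\|_{s_0+2}\leq 1$ are used precisely to linearize all terms in $\|p\|$ and to ensure the operator $\mathcal P^{-1}$ is well-defined with the bounds stated; no further analytic input is required.
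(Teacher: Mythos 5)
Your argument is correct and follows essentially the same route as the paper: the factorization \eqref{eq:mathcalE*}, the splitting $(\breve h(x,D))^*=\langle D\rangle^\sigma\circ\widetilde h^*(x,D)$ with the symbol bounds from \eqref{eq:tamekappa0} and \eqref{sthbreve}, the pseudo-differential estimate \eqref{actiontamepse} applied to $w=\mathcal P^{-1}(\tfrac{u}{1+p'})$, and the composition/Moser bounds for $w$. The only minor imprecision is that the Moser estimate for $1/(1+p')$ actually yields $\|p\|_{s+\sigma+1}$ rather than $\|p\|_{s+\sigma}$ in the bound for $\|w\|_{s+\sigma}$, but as you note this is absorbed into the coefficient $\|p\|_{s+\sigma+s_0}$ of the final estimate, so the conclusion is unaffected.
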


\begin{proof}
In order to estimate 
 $ E^* $ 
  in 
  \eqref{eq:mathcalE*}   we write $ \breve h (x,D) = \widetilde h (x,D) \circ  \langle D \rangle^{\sigma}  $ where $ \widetilde h (x,j) :=   \breve h (x,j)\langle j \rangle^{-\sigma} $
  and thus 
  \[
     \big( \breve h (x,D) \big)^* = 
 \langle D \rangle^{\sigma} \circ \big(  \, \widetilde h (x,D) \big)^*   =  
\langle D \rangle^{\sigma}  \circ  \widetilde h^* (x,D) \, , 
  \]
  where  the symbol
  $ \widetilde h^* (x,j) $   (cfr.~\eqref{adjhD}) is a symbol of order $ 0 $ satisfying, by \eqref{eq:tamekappa0}, \eqref{hsigmah0} and  \eqref{sthbreve}, for any $ s \geq 1 $, 
\begin{equation}\label{passtar}
| \widetilde h^*  |_{0,s} \leq C | \widetilde h|_{0,s+s_0} = 
 C | \breve h|_{\sigma,s+s_0} 
  \leq   C'(s) | h |_{\sigma,s+s_0} + C(s_0) \| p \|_{s+s_0} | h |_{\sigma,s_0+1}  \, . 
\end{equation}
  Thus we estimate
  $ E^*  =  \langle D  \rangle^{\sigma} \circ   \widetilde h^* (x,D) 
  \circ \mathcal P^{-1}\circ \frac{1}{1+p'}  $ as, for any $ s+ \sigma \geq 1 $, 
  setting $ v := {\mathcal P}^{-1}  (\frac{1}{1+p'} u) $
  \begin{align} 
  \| E^* u \|_{s} 
  & \stackrel{\eqref{hsigmah0},\eqref{actiontamepse}} \leq C(s) | \widetilde h^* |_{0,s+\sigma} \| v \|_{s_0} + C(s_0) | \widetilde h^* |_{0,s_0} \| v \|_{s+\sigma} \notag \\
  & \stackrel{\eqref{passtar}} \leq C(s) \Big[  | h |_{\sigma,s+\sigma+s_0} +  \| p \|_{s+\sigma+s_0} | h |_{\sigma,s_0+1}    \Big]  \| v \|_{s_0} + C(s_0)  | h |_{\sigma,s_0+1}      \| v \|_{s+\sigma} \label{intermfioad}
     \end{align}
  since $ s_0 = 1 $ and $ \| p \|_{2 s_0} \leq 1 $.   Now by  
    \eqref{eq:tamecomposition} for $ s = s_0 + 1 $, the condition $ \| p \|_{s_0+2} \leq 1 $,
    \eqref{eq:tameproduct} and the Moser composition 
estimate $ \| \frac{1}{1+p'}\|_s \leq C(s)(1 + \|p \|_{s+1})   $ for any $ s \geq s_0 $, 
 we readily get, for any $ s + \sigma > s_0 + 1 $,  
\be\label{stimavad}
\| v \|_{s+\sigma} \lesssim_s \, \| u \|_{s+\sigma} + \| u \|_{s_0+1} \| p \|_{s+\sigma+1} \, , 
\quad
 \| v \|_{s_0}  \leq \| v \|_{s_0+1} \lesssim_{s_0} \| u \|_{s_0+1} \, . 
\ee
By \eqref{intermfioad} and \eqref{stimavad} we deduce that, 
for any $ s \geq \max\{1, s_0 + 1 - \sigma \}  $,  
\[
  \| E^* u \|_{s} 
  \lesssim_s \Big[  | h |_{\sigma,s+\sigma+s_0} +  \| p \|_{s+\sigma+s_0} | h |_{\sigma,s_0+1}    \Big] \| u \|_{s_0+1}  +   | h |_{\sigma,s_0+1}     \| u \|_{s+\sigma}     
\]
proving \eqref{Estar}.
\end{proof}

{\bf Non-stationary-phase principle.} We shall use the classical 
non-stationary-phase  principle 
in the following form (see \cite[Lemma 13.6]{Baldi:2015}). 

\begin{lem}\label{lem:PS}
Let $ p \in H^2 (\T,\R)  $ satisfy $ \| p \|_2 \leq K $ 
and $ \| p'  \|_{C^0} < 1 / 2 $. Then, for any  $ n, s \in \N $ there exists a constant $C(s,K)>0$ such that 
\be\label{fasesta}
\Big| \int_{\T} u(x) e^{i n (x+ p(x))} \, \diff x \Big| 
\leq  \, \frac{C(s,K)}{n^s}  \Big( \| u \|_s + \| p \|_{s+1}  \| u \|_1 \Big) \, . 
\ee
\end{lem}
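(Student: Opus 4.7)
My plan is to prove \eqref{fasesta} by iterated integration by parts, exploiting that the phase is non-stationary. Set $\phi(x) := x + p(x)$. Since $\|p'\|_{C^0} < 1/2$, the derivative $\phi'(x) = 1 + p'(x)$ lies in $(1/2, 3/2)$ for every $x\in\T$, so $1/\phi'$ is a smooth $ 2\pi $-periodic function. Define the first-order differential operator
\[
L u := \frac{1}{in\, \phi'(x)}\, \partial_x u  \, , \qquad \text{so that} \qquad L\bigl(e^{i n \phi(x)}\bigr) = e^{i n \phi(x)} \, .
\]
Its formal transpose (with respect to Lebesgue integration on $\T$) is $L^\top u = -\partial_x\!\left(\dfrac{u}{in\, \phi'}\right)$. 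Since $\T$ has no boundary, integrating by parts $s$ times gives
\[
\int_{\T} u(x)\, e^{in(x+p(x))} \, \diff x \; = \; \int_{\T} \bigl((L^\top)^s u\bigr)(x)\, e^{in(x+p(x))} \, \diff x \, ,
\]
so that $\bigl|\int_{\T} u\, e^{in\phi}\,\diff x\bigr| \leq \|(L^\top)^s u\|_{L^1} \lesssim \|(L^\top)^s u\|_{1}$.

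The core step is to show by induction on $s$ the structural identity
\[
(L^\top)^s u \;=\; \frac{1}{n^s} \sum_{k=0}^{s} c_{k,s}(x)\, u^{(k)}(x) \, ,
\]
where each coefficient $c_{k,s}$ is a polynomial in $1/\phi'$ and in the derivatives $p'', \ldots, p^{(s-k+1)}$, of total weight $s-k$ counting each derivative of $p$ (and each application of $\partial_x$ to $1/\phi'$) as weight one. In particular, the worst "top-derivative" allocations are either $k=s$ (giving $c_{s,s}\cdot u^{(s)}$ with $c_{s,s}$ depending only on $1/\phi'$) or $k=1$ (giving a term involving $p^{(s)}$ multiplied by $u'$), with all intermediate allocations lying between.

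To turn this into \eqref{fasesta}, I bound each $\|c_{k,s}\, u^{(k)}\|_1$ using the asymmetric tame product estimate \eqref{eq:tameproduct}:
\[
\|c_{k,s}\, u^{(k)}\|_1 \; \leq \; C\,\|c_{k,s}\|_{s_0}\, \|u^{(k)}\|_{1} + C\,\|c_{k,s}\|_{1}\, \|u^{(k)}\|_{s_0} \, .
\]
The Moser composition estimate yields $\|1/\phi'\|_{\sigma} \leq C(\sigma,K)(1 + \|p\|_{\sigma+1})$, and the interpolation \eqref{sumabab} lets me trade any intermediate Sobolev norm of $p$ against the product $\|p\|_2^{\theta}\, \|p\|_{s+1}^{1-\theta}$; since $\|p\|_2 \leq K$, all intermediate norms of $p$ are absorbed into $C(s,K)$, leaving only the top norm $\|p\|_{s+1}$ multiplied by the lowest norm of $u$, namely $\|u\|_1$. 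The allocation with $k=s$ produces a pure $\|u\|_s$ contribution (with constant $C(s,K)$), so summing over $k$ and using the factor $n^{-s}$ yields
\[
\bigl\| (L^\top)^s u \bigr\|_{1} \;\leq\; \frac{C(s,K)}{n^s}\Bigl( \|u\|_s + \|p\|_{s+1}\, \|u\|_1 \Bigr) \, ,
\]
which is exactly \eqref{fasesta}.

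The main obstacle I expect is the bookkeeping needed to confirm the asymmetric split: one must verify that every term produced by iterating $L^\top$ can be reduced, via \eqref{eq:tameproduct} and \eqref{sumabab}, to one of the two extreme configurations $\|u\|_s$ or $\|p\|_{s+1}\|u\|_1$ (times a constant depending only on $s$ and $K$), rather than a mixed product like $\|p\|_{s/2}\,\|u\|_{s/2}$. This is really a combinatorial check on the Leibniz expansion of $(L^\top)^s$, and it works precisely because tame product inequalities provide the asymmetric estimate at each step of the induction.
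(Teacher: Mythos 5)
Your overall strategy is the right one and is the standard proof of this kind of non-stationary-phase estimate (the paper does not prove the lemma itself but cites \cite[Lemma 13.6]{Baldi:2015}, whose proof is exactly this iterated integration by parts with $L=\tfrac{1}{in\phi'}\partial_x$ followed by tame product and interpolation estimates on the coefficients). The identity $\int_\T u\,e^{in\phi}\,\diff x=\int_\T (L^\top)^s u\,e^{in\phi}\,\diff x$ and the structural description of $(L^\top)^s u$ are correct.

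There is, however, a concrete step that does not deliver the stated inequality: the chain $\bigl|\int_\T u\,e^{in\phi}\bigr|\leq\|(L^\top)^su\|_{L^1}\lesssim\|(L^\top)^su\|_{1}$ followed by estimating each product $c_{k,s}u^{(k)}$ in the $H^1$ norm. The top allocation $k=s$ then produces $\|c_{s,s}u^{(s)}\|_1\sim\|u^{(s)}\|_1\sim\|u\|_{s+1}$, not $\|u\|_s$, and likewise the $k=0$ allocation produces $\|p\|_{s+2}\|u\|_1$ rather than $\|p\|_{s+1}\|u\|_1$; so, contrary to your claim, the $k=s$ term is \emph{not} a ``pure $\|u\|_s$ contribution'' under your estimate, and you end up proving \eqref{fasesta} with $s$ shifted by one on the right-hand side. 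The fix is to stop at $L^2$: use $\|f\|_{L^1(\T)}\leq C\|f\|_{0}$ and bound $\|c_{k,s}u^{(k)}\|_0$ via \eqref{intermba}, i.e.\ $\|fg\|_0\leq C\|f\|_{1}\|g\|_0$ with the $H^1$ (hence $C^0$) control placed on whichever factor is of lower order; this gives $\|u\|_s$ for $k=s$ and $\|p\|_{s+1}\|u\|_1$ for $k=0$. A second, smaller gloss: to reduce the intermediate allocations to the two extreme configurations you need to split a product of norms of \emph{different} functions, e.g.\ $\|p\|_{s-k+2}\|u\|_k$; the inequality \eqref{sumabab} interpolates norms of a single function, so what you actually want here is the bilinear interpolation \eqref{asintpo} (with $u:=p$, $v:=u$, $s_1=2$, $s_2=1$), which gives $\|p\|_{s-k+2}\|u\|_{k}\leq\epsilon\|p\|_{s+1}\|u\|_1+C(\epsilon)\|p\|_2\|u\|_s$ and absorbs $\|p\|_2\leq K$ into the constant. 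With these two repairs your argument closes.
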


\section{The non-linear operator $S$}\label{sec:S}

As a first step, we expand  the non-linear operator $ S(a,b)  $ in \eqref{eq:S} in Fourier series obtaining a formula involving the Bessel function $J_1 (\theta) $ defined in \eqref{Bess1}. To this purpose, note that,
 splitting $e^{-i\varphi}=\cos\varphi-i\sin\varphi$ and using the fundamental theorem of calculus,  the Bessel function $J_1 (\theta) $  in \eqref{Bess1} is equal to 
\begin{equation}\label{J1:prop}
	\begin{aligned}
J_1(\theta)&=\frac{-i}{2\pi} \int_{\T} e^{i \theta \sin \varphi}\sin\varphi\,\diff\varphi \\
& =\frac{-i}{2\pi} \int_{\T} e^{i \theta \sin(\varphi+\pi)}\sin(\varphi+\pi)\,\diff\varphi\\
&=\frac{i}{2\pi} \int_{\T} e^{-i \theta \sin \varphi}\sin\varphi\,\diff\varphi  =\frac{\theta}{2\pi} \int_{\T} e^{-i \theta \sin \varphi}\cos^2\varphi\,\diff\varphi 
	\end{aligned}
\end{equation}
integrating by parts. 
Comparing the first and the third formula we see that $J_1 (\theta) $ is {\it real} and 
{\it odd}.
\begin{lem} \label{lem:SBessel}
{\bf (Fourier expansion of the Action functional)}
The action function $ S(a,b)  $ in \eqref{eq:S}  has the Fourier expansion 
\be\label{SabFourier}
S(a,b) = \sum_{k \in \Z \setminus \{0\}} 
\Big( \frac{1}{k} \int_\T J_1 	\big( k A(x) \big) e^{-i k B(x)} \, \diff x \Big) \,  e_k  
\ee
where $ J_1 $ is the first Bessel function defined in \eqref{Bess1}. 
\end{lem}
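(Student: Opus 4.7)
The plan is to compute directly the $k$-th Fourier coefficient $\widehat{S}(k)$ of $S(a,b)$ for $k\neq 0$ (the $k=0$ coefficient vanishes since $S(a,b)$ has zero mean, which is also consistent with the fact that the constant $-\pi A_0$ in \eqref{eq:S} contributes only to the mean). Throughout, I use the convention $\widehat{S}(k)=(S,e_k)_{L^2}=\frac{1}{2\pi}\int_\T S(a,b)(I)\,e^{-ikI}\,\diff I$ from \eqref{uvL2}. The two key ingredients are the change of variables $I\leftrightarrow x$ afforded by the first integral $I_{A,B}$ in \eqref{firstintegral}, together with the identity \eqref{J1:prop} for the Bessel function $J_1$.

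First I would fix $k\in\Z\setminus\{0\}$ and, by Fubini, write
\[
\widehat{S}(k)=\frac{1}{2\pi}\int_\T\cos^2\varphi\,\Big(\int_\T A(x_{A,B}(I,\varphi))\,\frac{\partial x_{A,B}}{\partial I}(I,\varphi)\,e^{-ikI}\,\diff I\Big)\diff\varphi.
\]
Since $(a,b)$ lies in a small enough neighborhood of $(0,0)$, the map $x\mapsto I_{A,B}(x,\varphi)$ is a diffeomorphism of $\T$ (its derivative $A'(x)\sin\varphi+B'(x)$ is everywhere positive), with inverse $I\mapsto x_{A,B}(I,\varphi)$ by \eqref{IAB}. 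Performing the change of variables $I=I_{A,B}(x,\varphi)=A(x)\sin\varphi+B(x)$ with $\varphi$ fixed, and using the inverse function identity
\[
\frac{\partial x_{A,B}}{\partial I}(I_{A,B}(x,\varphi),\varphi)\cdot\partial_x I_{A,B}(x,\varphi)=1,
\]
the inner integral becomes
\[
\int_\T A(x)\,e^{-ik(A(x)\sin\varphi+B(x))}\,\diff x.
\]
Applying Fubini once more and factoring out $e^{-ikB(x)}$ from the $\varphi$-integral yields
\[
\widehat{S}(k)=\frac{1}{2\pi}\int_\T A(x)\,e^{-ikB(x)}\Big(\int_\T\cos^2\varphi\,e^{-ikA(x)\sin\varphi}\,\diff\varphi\Big)\diff x.
\]

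Finally, by the last equality in \eqref{J1:prop} applied with $\theta=kA(x)$, the inner $\varphi$-integral equals $2\pi J_1(kA(x))/(kA(x))$. Substituting cancels the factor $A(x)$ in front and produces
\[
\widehat{S}(k)=\frac{1}{k}\int_\T J_1(kA(x))\,e^{-ikB(x)}\,\diff x,
\]
which is exactly the formula \eqref{SabFourier}. I expect no serious obstacle here: the only delicate points are the justification of the change of variables (which rests on the smallness of $(a,b)$ ensuring that $x\mapsto I_{A,B}(x,\varphi)$ is a global diffeomorphism of $\T$ for each $\varphi$) and the careful bookkeeping with the Fourier convention; both are routine.
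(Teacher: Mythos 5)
Your proof is correct and follows essentially the same route as the paper: compute $\widehat{S(a,b)}(k)$ for $k\neq 0$, perform the change of variables $I=I_{A,B}(x,\varphi)$ using \eqref{IAB}, and identify the resulting $\varphi$-integral with $2\pi J_1(kA(x))/(kA(x))$ via the last formula in \eqref{J1:prop}. The extra remarks on the $k=0$ mode and on the diffeomorphism justification are fine but not needed beyond what the paper already assumes.
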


\begin{proof}
For any  $k\neq 0$ the Fourier coefficients of the function $ S(a,b) $ in \eqref{eq:S}
are, using the substitution $x=x_{A,B}(I,u)$, \eqref{IAB} and  \eqref{firstintegral},  \begin{align}
	\widehat{S(a,b)}(k) &= \frac{1}{2\pi}\int_\T \int_{\T} (\cos^2\varphi)  \, A(x_{A,B}(I,\varphi)) \, \frac{\partial x_{A,B}}{\partial I}(I,\varphi) \, e^{-i k I} \, \diff \varphi \diff I \nonumber\\
	&=\frac{1}{2\pi}\int_{\T} (\cos^2\varphi) \int_\T A(x) e^{-i k I_{A,B}(x,\varphi)}\, \diff x \, \diff \varphi \nonumber \\
	&= \frac{1}{k}\int_\T  \, e^{-i k B(x)}\frac{kA(x)}{2\pi} \int_{\T} (\cos^2\varphi) e^{- i k A(x) \sin\varphi} \, \diff \varphi
	 \, \diff x \nonumber\\
	&= \frac{1}{k} \int_\T J_1(k A(x)) e^{-i k B(x)} \, \diff x \notag 
\end{align}
by  the last formula in \eqref{J1:prop}. 
\end{proof}

In order to estimate $ S(a,b) $ and its derivatives we need the following lemma about the decay of  the Bessel function $J_1$ and its derivatives, which follows by elementary properties of oscillatory integrals. 

\begin{lem}\label{lem:J1}
{\bf (Properties of the Bessel function $J_1$)} 
The Bessel function $ J_1 (\theta) $ 
can be written as 
\begin{equation}\label{e:Jr}
J_1(\theta)=  \mathrm{Re} \big(H_1 (\theta) \big) \qquad 
\text{where} \qquad H_1 ( \theta ) = e^{i\theta}r(\theta) 
\end{equation}
has the following properties: 
\begin{enumerate}
\item 
the function $ r\colon\R\to\C$ satisfies 
\begin{equation}
\label{e:r}
|r^{(m)}(\theta)|\leq C_{m} \langle \theta \rangle^{-(m+\frac12)},\qquad \forall\,\theta \in \R  \, ,\ \forall\,m\geq 0\, .
\end{equation}
In particular  the norm 
$ \| r \|_{-\frac12,m} $ defined in \eqref{eq:1lemma1s2} is finite for any $ m \in \N $. 
\item There are $\theta_* > 0 $, $c_* > 0 $ such that 
\be\label{firstHF}
| r(\theta)| \geq c_*|\theta|^{- \frac12} \, , \quad \forall |\theta| > \theta_* \, . 
\ee
\item  For any $ n \in \N $ we have
$ J_1^{(n)} (\theta )  = \mathrm{Re} \big(e^{i \theta } r_n (\theta) \big) $
where 
\begin{equation}
\label{e:rn}
|r_n^{(m)}(\theta)|\leq C_{n, m} 
\langle \theta \rangle^{-(m+\frac12)},\qquad \forall\,\theta \in \R \, ,\ \forall\,m\geq 0\, .
\end{equation}
In particular the norm  $ \| r _n\|_{-\frac12,m} $ defined in \eqref{eq:1lemma1s2} is finite  for any $ m \in \N $.  
\item For any $\epsilon>0$ there exist constants $ 0<\lambda_\epsilon<\Lambda_\epsilon$ such that 
\begin{equation}\label{e:JJ}
\lambda_\epsilon |\theta|^{-1}\leq J_1(\theta)^2+J_1'(\theta)^2
\leq \Lambda_\epsilon|\theta|^{-1}, 
\qquad \text{if }\ \,|\theta|\geq\epsilon\,.
\end{equation}
\end{enumerate}
\end{lem}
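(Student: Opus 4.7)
The approach is to apply the method of stationary phase to the integral representation
\[
J_1(\theta)=\frac{-i}{2\pi}\int_{\T}e^{i\theta\sin\varphi}\sin\varphi\,\diff\varphi
\]
from \eqref{J1:prop}. On $\T$ the phase $\varphi\mapsto\sin\varphi$ has exactly two non-degenerate critical points at $\varphi=\pm\pi/2$, where it attains the values $\pm 1$ with second derivatives $\mp 1$. First I would fix a smooth partition of unity separating these two points from the rest of $\T$. On the complementary piece, repeated integration by parts (as in the non-stationary-phase estimate of Lemma \ref{lem:PS}) yields contributions of order $O(|\theta|^{-N})$ for every $N$, together with all of their $\theta$-derivatives. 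On the two pieces localised near $\pm\pi/2$, Morse coordinates reduce the integrals to Gaussian-type ones and standard stationary phase with full asymptotic expansion gives, for $|\theta|$ large,
\[
J_1(\theta)=e^{i\theta}r_+(\theta)+e^{-i\theta}r_-(\theta),
\]
with $r_\pm$ smooth and enjoying the symbolic bounds $|r_\pm^{(m)}(\theta)|\leq C_m\langle\theta\rangle^{-(m+\frac12)}$ for all $m\geq 0$. Since $J_1$ is real, $r_-=\overline{r_+}$, so setting $r(\theta):=2r_+(\theta)$ for $\theta$ large and positive, extending to $\theta<0$ by $r(-\theta):=-\overline{r(\theta)}$ (compatible with oddness of $J_1$), and gluing with an ad~hoc smooth choice on a bounded neighborhood of $0$ (where $\langle\theta\rangle\sim 1$ makes the bounds trivial) produces the representation $J_1=\mathrm{Re}(e^{i\theta}r(\theta))$ and the estimates \eqref{e:r} of part (1). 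Part (3) then follows by differentiating $H_1=e^{i\theta}r(\theta)$ repeatedly: each $H_1^{(n)}$ is again of the form $e^{i\theta}r_n(\theta)$ with $r_n$ a polynomial combination of $r,r',\dots,r^{(n)}$, inheriting \eqref{e:rn} from \eqref{e:r}.

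For the lower bound \eqref{firstHF}, I would read off the leading coefficient of the stationary-phase expansion at $\varphi=\pi/2$: after computing the Gaussian integral it equals $e^{-i3\pi/4}/\sqrt{2\pi|\theta|}$, a unimodular factor times $|\theta|^{-1/2}$. Consequently $|r(\theta)|=\sqrt{2/(\pi|\theta|)}+O(|\theta|^{-3/2})$ as $|\theta|\to\infty$, so choosing $\theta_*$ so large that the remainder is dominated by half of the leading term yields \eqref{firstHF} with any $c_*<1/\sqrt{2\pi}$.

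For part (4), the asymptotics produced in (1) and (3) show that the leading oscillations of $J_1$ and $J_1'$ are in quadrature: after dividing out the common factor $|\theta|^{-1/2}$ they become $\sqrt{2/\pi}\cos(\theta-3\pi/4)$ and $-\sqrt{2/\pi}\sin(\theta-3\pi/4)$ respectively. Hence $J_1(\theta)^2+J_1'(\theta)^2=\tfrac{2}{\pi|\theta|}(1+O(|\theta|^{-1}))$, settling both bounds on $\{|\theta|\geq M\}$ for $M$ sufficiently large. On the compact set $\{\epsilon\leq|\theta|\leq M\}$, the continuous function $J_1^2+J_1'^2$ is strictly positive: $J_1$ solves Bessel's equation $\theta^2 y''+\theta y'+(\theta^2-1)y=0$, and if $J_1(\theta_0)=J_1'(\theta_0)=0$ at some $\theta_0\neq 0$, uniqueness for this linear ODE would force $J_1\equiv 0$, contradicting $J_1'(0)=1/2$. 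Compactness then supplies constants comparing $J_1^2+J_1'^2$ with $|\theta|^{-1}$ on this set, concluding (4).

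The main technical hurdle is producing \emph{uniform-in-$\theta$} symbolic bounds on every derivative of the stationary-phase remainder. Each $\theta$-differentiation interacts both with the Morse change of variables near $\pm\pi/2$ and with the integration-by-parts scheme away from the critical points, and these must be tracked carefully to secure the sharp decay $\langle\theta\rangle^{-(m+\frac12)}$ prescribed by \eqref{e:r}. This is classical but bookkeeping-heavy, and is really the one point in the argument where one has to work rather than cite.
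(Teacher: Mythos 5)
Your proposal follows essentially the same route as the paper: a partition of unity isolating the two critical points $\pm\pi/2$ of the phase $\sin\varphi$, stationary phase (the paper cites Stein, Ch.~VIII, Props.~1 and 3) to obtain the representation $J_1=\mathrm{Re}\big(e^{i\theta}r(\theta)\big)$ with the symbolic bounds and a nonvanishing leading coefficient for the lower bound, and for part (4) the combination of these asymptotics at infinity with the Bessel ODE uniqueness argument on the compact set $\{\epsilon\leq|\theta|\leq M\}$. The only point worth tightening is your deduction of $r_-=\overline{r_+}$ ``since $J_1$ is real'': reality alone does not pin down the two symbols in the decomposition $e^{i\theta}r_++e^{-i\theta}r_-$, and the paper instead takes a partition satisfying $\psi(\varphi)+\psi(\varphi+\pi)=1$ and applies the substitution $\varphi\mapsto\varphi+\pi$, which exhibits the contribution from $-\pi/2$ as the literal complex conjugate of the one from $+\pi/2$ — a one-line fix that makes your step rigorous.
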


\begin{proof} 
Let $\psi_0\colon \T\to[0,1]$ be a smooth function equal to $1$ in a neighborhood of $\varphi=\frac\pi2$ and equal to $0$ in a neighborhood of $\varphi=-\frac\pi2$. Then the function $\psi\colon\T\to[0,1]$ defined by $\psi(\varphi) :=\frac{1}{2}(\psi_0(\varphi)+1-\psi_0(\varphi+\pi))$ has the same property and satisfies $\psi(\varphi)+\psi(\varphi+\pi)=1$ for any $\varphi\in\T$. Therefore we write the Bessel function in \eqref{J1:prop} as
\begin{equation}
\begin{aligned}
J_1(\theta)&=\frac{-i}{2 \pi} \int_{\T} e^{i \theta \sin \varphi}\psi(\varphi)\sin\varphi
\, \diff\varphi+\frac{-i}{2 \pi} \int_{\T} e^{i \theta \sin \varphi}\psi(\varphi+\pi)\sin\varphi \, \diff\varphi\\
&=\frac{-i}{2 \pi} \int_{\T} e^{i \theta \sin \varphi}\psi(\varphi)\sin\varphi
\, \diff\varphi+\frac{i}{2 \pi} \int_{\T} e^{-i \theta \sin \varphi}\psi(\varphi)
\sin\varphi \, \diff\varphi =
\mathrm{Re} \big(H_1 (\theta) \big)
\end{aligned}	
\end{equation}
as in \eqref{e:Jr}
with
\[
H_1(\theta):=\frac{-i}{ \pi} \int_{\T} e^{i \theta \sin \varphi}\psi(\varphi)\sin\varphi \, \diff\varphi \, ,
\qquad r(\theta):=\frac{-i}{ \pi} \int_{\T} e^{i \theta (\sin \varphi-1)}\psi(\varphi)\sin\varphi \, \diff\varphi \, . 
\]
Note that  $ r(\theta)=-\overline{r(-\theta)}$. 
The function $r(\theta)$ is an oscillatory integral whose 
phase $\phi(\varphi):=\sin\varphi-1$ has a unique critical point in the support of $\psi$ at $\varphi=\frac\pi2$. Since $\phi(\tfrac\pi2)=0$, the decay bounds \eqref{e:r} follow from \cite[Chapter VIII, Prop. 1, Prop. 3]{Stein}. We now prove the lower bound  \eqref{firstHF}.  
By \cite[Chapter VIII, Prop. 3]{Stein}   the 
oscillatory integral $ r(\theta ) $
admits the asymptotic expansion for $\theta>0$ large
\begin{equation}\label{rtlbo}
r (\theta) = \frac{-i}{ \pi} \Big( \frac{a_0(1)}{\sqrt{\theta}} + r_{1} (\theta) \Big) \qquad 
\text{where} \qquad 
|r_1 (\theta)| \leq \frac{C}{\theta} 
\end{equation}
and 
by \cite[Chapter VIII, Remark 1.3.4]{Stein}
\[
a_0 (1) = \Big( \frac{2\pi}{ - i \phi^{''} ( \tfrac{\pi}{2})} \Big)^{1/2} \psi(\tfrac\pi2)\sin(\tfrac\pi2)
=\Big( \frac{2\pi}{i} \Big)^{1/2}. 
\]
Since $a_0 (1) $ is  different from zero,  
the lower bound \eqref{firstHF} follows by \eqref{rtlbo}.

The estimates \eqref{e:rn} follow by induction from \eqref{e:r}. 

Finally, 
by \eqref{e:Jr},  \eqref{e:r}, \eqref{firstHF} we deduce that, for any $ | \theta | > \theta_*  $, 
\begin{align}
J_1(\theta)^2+J_1'(\theta)^2
& =\Big(\mathrm{Re}(e^{i\theta}r(\theta))\Big)^2+\Big(\mathrm{Re}(e^{i\theta}(ir(\theta)+r'(\theta))\Big)^2 
\notag \\
	&=\Big(\mathrm{Re}(e^{i\theta}r(\theta))\Big)^2+\Big(\mathrm{Im}(e^{i\theta}r(\theta))\Big)^2+O(|\theta|^{-2}) \notag \\
	&=|r(\theta)|^2+O(|\theta|^{-2}) = c^2_* |\theta|^{-1}+O(|\theta|^{-2}) \label{boundJ1J1'}
\end{align}
which proves \eqref{e:JJ} for $ |\theta | \geq \theta_1 $ large enough. 
In order to prove  \eqref{e:JJ} for $ \epsilon \leq |\theta | \leq \theta_1 $,
it is sufficient to show that 
the functions $J_1 (\theta) $ and $J_1' (\theta)  $ do not vanish simultaneously.
Assume by contradiction that there exists  $ \theta_0 \neq 0 $ such that 
$ J_1(\theta_0)=J_1'(\theta_0)=0$.
Since $J_1 (\theta) $ solves the second-order linear differential equation 
\[
\theta^2 J_1'' (\theta)+\theta J_1'(\theta)+(\theta^2-1)J_1(\theta)=0 
\] 
this would imply that  $J_1(\theta)=0$ for all $\theta\in\R $, which is false, see e.g. \eqref{boundJ1J1'}. Thus \eqref{e:JJ} is proved. 
\end{proof}

\begin{lem}\label{lem:tameS}
{\bf (Tame estimates for $ S $)}
	 There is $ \delta > 0 $ such that if $ \| (a,b) \|_{3} < \delta $, 
	then, for any $s\geq \tfrac72 $ we have 
	\begin{equation}\label{eq:tameS}
	\|S (a,b)\|_{s} \leq C(s) \big( 1+ \|(a,b)\|_{s- \frac12} \big) \, .
	\end{equation}
\end{lem}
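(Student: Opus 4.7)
The plan is to recognize $S(a,b)$ as a sum of adjoints of Fourier integral operators of the type considered in Lemma~\ref{lem:Estargene}, applied to the constant function $1$, and then to read off \eqref{eq:tameS} from Lemma~\ref{lem:Estargene}. Starting from the Fourier expansion in Lemma~\ref{lem:SBessel} and the decomposition $J_1(\theta)=\tfrac12\bigl(e^{i\theta}r(\theta)+e^{-i\theta}\overline{r(\theta)}\bigr)$ coming from \eqref{e:Jr}, and using that $A(x)-B(x) = A_*-(x+p_1(x))$ with $p_1 := b-a$, while $A(x)+B(x) = A_*+(x+p_2(x))$ with $p_2 := a+b$, I would split $S(a,b) = S_1(a,b)+S_2(a,b)$ with
\begin{align*}
S_1(a,b) &= \sum_{k\neq 0}\frac{e^{ikA_*}}{2k}\int_\T r(kA(x))\,e^{-ik(x+p_1(x))}\,\diff x\cdot e_k,\\
S_2(a,b) &= \sum_{k\neq 0}\frac{e^{-ikA_*}}{2k}\int_\T \overline{r(kA(x))}\,e^{-ik(x+p_2(x))}\,\diff x\cdot e_k.
\end{align*}

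Comparing with \eqref{eq:E*App}, each $S_i(a,b)$ equals $E_i^*(1)$, where $E_i$ is the FIO in \eqref{eq:EApp} with phase $x+p_i(x)$ and symbol $h_i(x,k) := \tfrac{e^{\mp ikA_*}}{2k}\,\overline{r(kA(x))}$ for $k\neq 0$ (with $\overline r$ replaced by $r$ for $i=2$), extended by $h_i(x,0) := 0$. Since $\|p_i\|_{C^{s_0+1}} \lesssim \|(a,b)\|_3 < \delta$, for $\delta$ sufficiently small the map $x\mapsto x+p_i(x)$ is a diffeomorphism of $\T$, so the hypotheses of Lemma~\ref{lem:Estargene} are fulfilled. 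Because $r$ satisfies \eqref{eq:1lemma1s2} with exponent $-\tfrac12$ by Lemma~\ref{lem:J1}(1), Lemma~\ref{lem:Sobosimbol} gives
\[
|h_i|_{-3/2,s}\leq C(s)\bigl(1+\|a\|_s\bigr),
\]
the order $-\tfrac32$ arising from combining the prefactor $1/k$ with the $k^{-1/2}$-decay of $r(kA(\cdot))$.

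Then I apply Lemma~\ref{lem:Estargene} with $\sigma=-\tfrac32$, $s_0=1$ and $u=1$. The constraint $s\geq \max\{1, s_0+1-\sigma\} = \tfrac72$ matches exactly the threshold in the present statement, and since $\|1\|_m = 1$ for every $m$, the bound of Lemma~\ref{lem:Estargene} reduces to
\[
\|S_i(a,b)\|_s \leq C(s)\bigl(|h_i|_{-3/2, 2} + |h_i|_{-3/2, s-\frac12} + |h_i|_{-3/2, 2}\,\|p_i\|_{s-\frac12}\bigr).
\]
Using the symbol bound above together with $\|a\|_2 \lesssim \delta \leq 1$ and $\|p_i\|_{s-\frac12}\leq \|(a,b)\|_{s-\frac12}$, this becomes $\|S_i(a,b)\|_s\leq C(s)\bigl(1+\|(a,b)\|_{s-\frac12}\bigr)$, and summing over $i=1,2$ yields \eqref{eq:tameS}.

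The only step that really requires insight is the algebraic rewriting $S(a,b)=E_1^*(1)+E_2^*(1)$, which makes visible the oscillatory-integral structure encoded in the Bessel function and exploits the non-stationarity of both phases $\pm(x+p_i(x))$ — guaranteed by the closeness of $(A,B)$ to the trivial system $(A_*,\mathrm{id}_\T)$. Once this identification is performed, the tame estimate \eqref{eq:tameS} is a direct consequence of the abstract FIO-adjoint bound of Lemma~\ref{lem:Estargene}, and the loss of only $\tfrac12$ derivative (rather than the full derivative suggested by the factor $1/k$) is the exact reflection of the $k^{-1/2}$ asymptotics of the Bessel symbol.
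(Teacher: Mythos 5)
Your proposal is correct and follows essentially the same route as the paper: split $S(a,b)$ via $J_1=\tfrac12(e^{i\theta}r+e^{-i\theta}\bar r)$ into two adjoint-FIO pieces of the form \eqref{eq:E*App} acting on $u\equiv 1$, verify via Lemmata \ref{lem:J1} and \ref{lem:Sobosimbol} that the symbols have order $-\tfrac32$ with $|h|_{-3/2,s}\lesssim_s 1+\|a\|_s$, and conclude by Lemma \ref{lem:Estargene} with $\sigma=-\tfrac32$, which also accounts for the threshold $s\geq\tfrac72$.
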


\begin{proof}
We use \eqref{AxBx}, \eqref{SabFourier} and \eqref{e:Jr}  
to write 
\begin{align}
& S(a,b) 
		 = \sum_{k\neq 0} \left(\frac{1}{k} \int_\T \mathrm{Re} \big( r(k A(x)) e^{ i k A(x)}\big ) e^{- i k B(x)} \, \diff x\right) e_k  \label{eq:FourierS2} \\
		& = \sum_{k\neq 0} \frac{1}{2k} \left ( \int_\T r( k A(x))  
		e^{i k A_*} e^{- i k (x+ b(x)-a(x))} \, \diff x\right )e_k   + \sum_{k\neq 0} \frac{1}{2k} \left ( \int_\T 
		\overline{r( k A(x))}  e^{- i k A_*} 
		e^{- i k (x+ b(x) +a(x))} \, \diff x\right) e_k  \, .  \notag 
\end{align}
The first series in \eqref{eq:FourierS2} (being the argument for the second series completely analogous) has  the form $ E^*$
as in \eqref{eq:E*App} with $u(x)\equiv 1$ and 
\[
h(x,k) := \begin{cases} 
(2k)^{-1} \overline{r( k A(x))} e^{-i k A_*}\, , \quad \ \ \forall k \neq 0, \\
0  \, , \qquad \qquad \qquad \qquad \qquad \quad  k = 0 \, , 
	\end{cases} \qquad \qquad p(x) := b(x) - a(x) \, . 
\] 
In view of 
\eqref{e:r} and  Lemma \ref{lem:Sobosimbol}, the symbol $ h(x,k) $ has order 
$ -3/2 $ with  norm  
$ | h |_{-\frac32,s} \lesssim_s 1+ \| a\|_s  $ for any $ s \geq 0 $. Indeed, notice that up to the factor $(2k)^{-1}e^{-ikA_*}$ which satisfies $|(2k)^{-1}e^{-ikA_*}| \leq \langle k\rangle^{-1}$, $h$ is of the form \eqref{eq:symbolh}, where $\ell$ has order $\sigma=-\tfrac12$.
 Then applying Lemma \ref{lem:Estargene} with $ u = 1 $ and $ \sigma = - \frac32 $,
 and for $ \| (a,b) \|_{s_0+2}=\| (a,b) \|_{3} $ small enough, 
 we deduce,  for any $ s \geq \max\{1,s_0 + 1 - \sigma \} =\max\{1,1+1+\tfrac32\}= \tfrac72 $,  the bound in \eqref{eq:tameS}.
\end{proof}


\section{The linearized operator $ \diff S $} \label{sec:diffS}
In this section we compute the differential $\diff S\colon C^\infty(\T)\times C^\infty(\T)\to C^\infty_0(\T)$, the second differential $ \diff^2 S $, the $L^2$-adjoint $\diff S^*\colon C^\infty_0(\T)\to C^\infty(\T)\times C^\infty(\T)$, and the composition $\diff S\circ \diff S^*\colon C^\infty_0(\T)\to C^\infty_0(\T)$. All these operators are real, 
namely send real functions into real functions.
We also prove that such operators 
satisfy 
tame estimates in Sobolev spaces.

\noindent 
{\bf The differential $\diff S $.}
Differentiating \eqref{SabFourier}
at $(a,b)$ in the tangent direction $(\alpha,\beta)$, 
we deduce that
\be\label{diffSab}
\diff S(a,b)[\alpha,\beta] = 
 \sum_{k\neq 0} \left ( \int_\T \Big [ J_1'( k A(x))  \alpha(x) - i J_1 ( k A(x))  \beta(x) \Big ]  e^{- i k B(x)}\, \diff x\right ) e_k \, . 
\ee
We first consider the case $ (a,b) = (0,0) $.

\begin{lem}\label{lem:dStrivial}
{\bf (Differential $ \diff S(0,0) $)}
The differential
\be\label{dSacon}
\diff S(0,0)[\alpha,\beta] =  2 \pi
\sum_{k\neq 0} \Big [J_1'( k A_*)\widehat\alpha(k)-i J_1 ( k A_*) \widehat\beta(k) \Big ] e_k 
\ee
is a map 
$ \diff S(0,0) \colon H^{s-\frac 12}(\T)\times H^{s-\frac 12}(\T)\to H^{s}_0(\T) $  for any  $ s \in \R $, with  kernel 
\[
\ker \diff S(0,0) = \Big \{(\alpha,\beta) 
\in H^{s-\frac 12} (\T)\times H^{s-\frac 12}(\T) \ \Big |\ J_1'(k A_* ) \widehat \alpha (k) - i J_1(k
A_*) \widehat \beta(k)=0 \, , \ \forall k\neq 0\Big \} 
\]
(cfr.~\eqref{SDABstar}) and  
right inverse
$ R(0,0)  \colon H^{s+\frac12}_0(\T) \to H^s(\T)\times H^s(\T)  $ given by 
\[
R(0,0)  [\gamma] = 
\frac{1}{2 \pi}  
\Big (  \sum_{j\neq0} \frac{J_1'( jA_*)}{J_1( jA_*)^2 + J_1'(jA_*)^2} \widehat \gamma(j)e_j, i \sum_{j\neq 0} 
\frac{J_1(j A_*)}{J_1( jA_*)^2 + J_1'(jA_*)^2}  \widehat\gamma(j)e_j\Big ) \, . 	 
\]
\end{lem}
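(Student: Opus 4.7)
The lemma is essentially a direct Fourier-diagonal computation, so the plan is to first derive \eqref{dSacon} by specialization, then check boundedness, kernel, and right invertibility separately by reading the formula mode-by-mode. First, I would deduce \eqref{dSacon} from the general formula \eqref{diffSab} by plugging in $A(x) \equiv A_*$ and $B(x) = x$: the integral reduces to $\int_\T[J_1'(kA_*)\alpha(x)-iJ_1(kA_*)\beta(x)]e^{-ikx}\,\diff x$, which is exactly $2\pi$ times the $k$-th Fourier coefficient of the bracket. Note that the $k=0$ mode is absent, so the output automatically lies in $H^s_0(\T)$; reality is preserved because $J_1$ is real and odd (from \eqref{J1:prop}), so $J_1(\pm kA_*)$ and $J_1'(\pm kA_*)$ pair correctly with the Hermitian symmetry of Fourier coefficients of real functions.

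For the boundedness $\diff S(0,0)\colon H^{s-\frac12}\times H^{s-\frac12} \to H^s_0$, the upper estimates $|J_1(\theta)|,|J_1'(\theta)| \lesssim \langle\theta\rangle^{-\frac12}$ provided by Lemma \ref{lem:J1}(1),(3) give
\[
\|\diff S(0,0)[\alpha,\beta]\|_s^2 \lesssim \sum_{k\neq 0}\langle k\rangle^{2s-1}\bigl(|\widehat\alpha(k)|^2+|\widehat\beta(k)|^2\bigr) \lesssim \|\alpha\|_{s-\frac12}^2+\|\beta\|_{s-\frac12}^2,
\]
valid for every real $s$. The kernel characterization is then immediate: from \eqref{dSacon}, $\diff S(0,0)[\alpha,\beta]=0$ if and only if $J_1'(kA_*)\widehat\alpha(k)-iJ_1(kA_*)\widehat\beta(k)=0$ for every $k\neq 0$, which is the claimed defining condition.

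For the right inverse, the key algebraic observation is that, mode-by-mode, the system reduces to solving a single scalar equation
\[
J_1'(kA_*)\,\widehat\alpha(k)-iJ_1(kA_*)\,\widehat\beta(k)=\tfrac{1}{2\pi}\widehat\gamma(k)
\]
for the pair $(\widehat\alpha(k),\widehat\beta(k))$. The proposed formula for $R(0,0)[\gamma]$ is the natural ``minimal norm'' choice that makes the substitution identity
\[
\frac{J_1'(kA_*)^2+J_1(kA_*)^2}{J_1(kA_*)^2+J_1'(kA_*)^2}=1
\]
collapse the right-hand side to $\widehat\gamma(k)/(2\pi)$, yielding $\diff S(0,0)\circ R(0,0)=\mathrm{Id}$. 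This construction is legitimate precisely because the denominator $J_1(kA_*)^2+J_1'(kA_*)^2$ never vanishes for $k\neq 0$ and in fact satisfies the lower bound $\gtrsim |k|^{-1}$ by Lemma \ref{lem:J1}(4) applied with $\epsilon := A_*>0$. Combining this lower bound with the upper bounds $|J_1(kA_*)|,|J_1'(kA_*)|\lesssim \langle k\rangle^{-\frac12}$ gives $|\widehat\alpha(k)|,|\widehat\beta(k)|\lesssim \langle k\rangle^{\frac12}|\widehat\gamma(k)|$ and hence the desired mapping $R(0,0)\colon H^{s+\frac12}_0 \to H^s\times H^s$.

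The only non-trivial ingredient is the non-simultaneous vanishing of $J_1$ and $J_1'$, which has already been extracted as Lemma \ref{lem:J1}(4) from the second-order linear ODE satisfied by $J_1$; everything else is Fourier-coefficient bookkeeping. No implicit function theorem or perturbation argument is needed at this stage, which is the reason the conclusion holds for \emph{all} $s\in\R$ rather than only for $s$ above some threshold.
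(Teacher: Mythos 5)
Your proposal is correct and follows essentially the same route as the paper: specialize \eqref{diffSab} to $(a,b)=(0,0)$ to read off the diagonal Fourier formula, use the decay $|J_1(kA_*)|,|J_1'(kA_*)|\lesssim\langle k\rangle^{-1/2}$ from Lemma \ref{lem:J1} for boundedness, and use the lower bound \eqref{e:JJ} on $J_1^2+J_1'^2$ for the right inverse. You merely spell out the mode-by-mode bookkeeping that the paper's two-line proof leaves implicit.
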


\begin{proof}
Formula \eqref{dSacon} is a special case of \eqref{diffSab} with  
$ \widehat \alpha (k) = \frac{1}{2\pi} \int_{\T} \alpha (x) e^{-i k x } \,  \diff x $
and $ \widehat \beta (k) = \frac{1}{2\pi} \int_{\T} \beta (x) e^{-i k x } \,  \diff x $. 
By \eqref{e:r}, \eqref{e:rn}
we deduce that $ |J_1 (k A_*) |,  |J_1' (k A_*) | 
\lesssim \langle k \rangle^{-\frac12} $ and, using also \eqref{e:JJ}, the lemma follows. 
\end{proof}

The differential $ \diff S(a,b)$ satisfies the following tame  estimates on Sobolev spaces.  

\begin{lem} {\bf (Tame estimates of $ \mathrm d S $)}	\label{lem:tamedS}
There is $ \delta > 0 $ such that if $ \| (a,b) \|_{ 3} < \delta $,  
	then, for any $s\geq \tfrac52 $ we have  
	\begin{equation}\label{eq:tamedS}
	\|\diff S (a,b)[\alpha,\beta]\|_{s} \leq C(s) \|(\alpha,\beta)\|_{s- \frac12} 
	+ C(s) \|(a,b)\|_{s+ \frac12} \|(\alpha,\beta)\|_2 \, .
	\end{equation}
\end{lem}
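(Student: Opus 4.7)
The plan is to mirror the proof of Lemma \ref{lem:tameS}, writing $\diff S(a,b)[\alpha,\beta]$ as a finite sum of adjoint Fourier integral operators of the form \eqref{eq:E*App} applied to $\alpha$ and to $\beta$, and then applying the tame estimate of Lemma \ref{lem:Estargene}.

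\textbf{Step 1: Decomposition.} Using the Bessel asymptotics of Lemma \ref{lem:J1}, namely $J_1(\theta) = \tfrac12(e^{i\theta}r(\theta)+e^{-i\theta}\overline{r(\theta)})$ and the analogous identity $J_1'(\theta) = \tfrac12(e^{i\theta}r_1(\theta)+e^{-i\theta}\overline{r_1(\theta)})$, together with $A(x) = A_* + a(x)$ and $B(x) = x + b(x)$, I would rewrite \eqref{diffSab} as
\[
\diff S(a,b)[\alpha,\beta] \;=\; \sum_{\pm} E^*_{\pm,\alpha}[\alpha] \;+\; \sum_{\pm} E^*_{\pm,\beta}[\beta],
\]
where each $E^*_{\pm,\bullet}$ has the form \eqref{eq:E*App} with phase $p_\pm(x) := b(x) \mp a(x)$ and symbol
\[
h_{\pm,\alpha}(x,k) \;=\; \tfrac{1}{2} e^{\mp i k A_*}\, \overline{r_1(\pm k A(x))}, \qquad
h_{\pm,\beta}(x,k) \;=\; \mp \tfrac{i}{2} e^{\mp i k A_*}\, \overline{r(\pm k A(x))},
\]
times an indicator that $k\neq 0$ (the $k=0$ Fourier coefficient is absent because $\diff S$ lands in $H^s_0$).

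\textbf{Step 2: Symbol estimates.} Each of the base functions $r,r_1$ satisfies the hypothesis \eqref{eq:1lemma1s2} of Lemma \ref{lem:Sobosimbol} with $\sigma = -\tfrac12$ by \eqref{e:r}--\eqref{e:rn}. The phase factors $e^{\mp ikA_*}$ are bounded and independent of $x$, hence harmless. Therefore Lemma \ref{lem:Sobosimbol} yields, for every $s\geq 0$,
\[
|h_{\pm,\alpha}|_{-\frac12,s} \;+\; |h_{\pm,\beta}|_{-\frac12,s} \;\lesssim_s\; 1 + \|a\|_s,
\]
provided $\|a\|_{C^0} \leq A_*/2$ and $\|a\|_1 \leq 1$, which is guaranteed for $\|(a,b)\|_3 < \delta$ small by Sobolev embedding.

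\textbf{Step 3: Apply the adjoint FIO estimate.} With $\sigma = -\tfrac12$ and $s_0=1$, the threshold $\max\{1,s_0+1-\sigma\} = \tfrac52$ appearing in Lemma \ref{lem:Estargene} is exactly the one required in \eqref{eq:tamedS}. The phase $p_\pm = b\mp a$ satisfies the smallness conditions of Lemma \ref{lem:Estargene} when $\|(a,b)\|_3 <\delta$. Substituting \eqref{Estar} at $u=\alpha$ (resp.\ $u=\beta$), the first term becomes $C(s)\,|h|_{-\frac12, 2}\,\|\alpha\|_{s-\frac12} \lesssim_s \|\alpha\|_{s-\frac12}$, and the second becomes
\[
C(s)\bigl(|h|_{-\frac12, s+\frac12} + |h|_{-\frac12, 2}\,\|p_\pm\|_{s+\frac12}\bigr)\|\alpha\|_{2} \;\lesssim_s\; \bigl(1 + \|(a,b)\|_{s+\frac12}\bigr)\|\alpha\|_{2},
\]
and similarly for $\beta$. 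Summing over the four contributions and absorbing the constant $1$ into the first summand (at $s\geq 5/2$, $\|\alpha\|_2 \leq \|\alpha\|_{s-1/2}$) yields exactly \eqref{eq:tamedS}.

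The only mildly delicate point, which I would double-check, is the bookkeeping in Step 2 for the symbol of order $-\tfrac12$: Lemma \ref{lem:Sobosimbol} is stated for $\ell(jA(x))$ but here the argument is $\pm k A(x)$, so I would apply it with $j := \pm k$ and the functions $\ell = r$, $\ell = r_1$, whose norms $\|\ell\|_{-\frac12,m}$ are finite by \eqref{e:r}--\eqref{e:rn}. No step requires a genuinely new idea beyond those already encapsulated in Lemmata \ref{lem:Sobosimbol} and \ref{lem:Estargene}.
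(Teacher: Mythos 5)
Your proposal is correct and follows essentially the same route as the paper's proof: decompose $\diff S(a,b)$ via the Bessel asymptotics of Lemma \ref{lem:J1} into four adjoint FIOs of the form \eqref{eq:E*App} with phases $b\mp a$ and symbols of order $-\tfrac12$, estimate the symbols by Lemma \ref{lem:Sobosimbol}, and conclude with Lemma \ref{lem:Estargene} at threshold $\max\{1,s_0+1-\sigma\}=\tfrac52$. The only differences (using $r_1$ in place of $ir+r'$ for $J_1'$, and writing the argument as $\pm kA(x)$) are cosmetic bookkeeping.
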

\begin{proof}
From \eqref{diffSab} we deduce that $\diff S(a,b)$ is the sum of four operators of the form $E^*$ in \eqref{eq:E*App}. More precisely, using the notation of \eqref{eq:E*App} the four operators correspond to
\begin{align*}
u(x)=\beta (x) \, ,\quad h(x,k)=\frac i2e^{-ikA_*} \overline{r(kA(x))}, \forall k \neq 0 \, , 
h(x,0) = 0 \, , 
\quad p(x)=b(x)- a(x) \, ,\\
u(x)=\beta (x) \, ,\quad h(x,k)=\frac i2e^{ikA_*}r(kA(x)), \forall k \neq 0 \, , 
h(x,0) = 0 \, ,
\quad p(x)=b(x)+a(x) \, ,\\
u(x)=\alpha(x) \, ,\quad h(x,k)=\frac 12e^{-ikA_*}\big(-i\overline{ r(kA(x))}+ \overline{r'(kA(x))}\big),
\forall k \neq 0 \, , 
h(x,0) = 0 \, , \quad p(x)=b(x) -a(x) \, ,\\
u(x)=\alpha(x) \, ,\quad h(x,k)=\frac 12e^{ikA_*}\big(ir(kA(x))+r'(kA(x))\big),
\forall k \neq 0 \, , 
h(x,0) = 0 \, , \quad p(x)=b(x) + a(x) \, .
\end{align*}
Up to the factor $e^{\pm ikA_*}$ which is constant in $x$ and has absolute value $1$ for all $k$, the symbols $h$ are of the form \eqref{eq:symbolh}, where the corresponding function $\ell$ is of order $\sigma=-1/2$ by \eqref{e:r}. As a consequence, Lemma \ref{lem:Sobosimbol} implies that $ | h |_{-\frac12,s} \lesssim_s 1 + \| a \|_s $ for any $ s \geq 0 $, 
cfr.~Lemma \ref{lem:tameS}. 
Therefore Lemma \ref{lem:Estargene} with $ p(x) = b(x) \pm a(x) $ (which satisfies 
$ \| p \|_{C^{s_0+1}}, \| p \|_{s_0+2} \leq 1 $)
implies, for any $ s   \geq \max\{1,s_0 + 1 - \sigma \} = \tfrac52 $,  the tame estimates \eqref{eq:tamedS} for $\diff S$. 
\end{proof}

\noindent 
{\bf The differential $\diff^2 S$.}
Differentiating \eqref{diffSab}
with respect to $(a,b)$ 
in the direction $(\alpha_1,\beta_1)$, we obtain
\begin{align}
 \diff^2 S(a,b)[(\alpha,\beta),(\alpha_1,\beta_1)]   
& =   \sum_{k \neq 0} \Big( \int_\T k \Big [  J_1''(k A(x))  \alpha (x)\alpha_1 (x) - J_1( kA(x))  \beta(x) \beta_1 (x)  \label{eq:coefficientsdS^2} \\
	&  \quad \qquad \qquad \qquad - i J_1' ( k A(x))  \big (\alpha_1(x)\beta(x) + \alpha(x)\beta_1(x)\big ) \Big ]  e^{-i k B(x)}  \, \diff x \Big) e_k \, . \nonumber 
\end{align}
The following tame estimates for $ \diff^2 S $ hold. 
\begin{lem} {\bf (Tame estimates of $ \diff^2 S $)}	\label{lem:tamedS2}
There is $ \delta > 0 $ such that if $ \| (a,b) \|_{3} < \delta $,  
	then, for any $ s \geq 3/2 $,  we have  
	\begin{align}
	\|\diff^2 S & (a,b) [(\alpha,\beta),(\alpha_1,\beta_1)]\|_{s} \leq 
	\label{eq:tamedS^2} \\
	&  C(s) \|(\alpha,\beta)\|_{2}\|(\alpha_1,\beta_1)\|_{s+\frac12} + 
	C(s) \|(\alpha,\beta )\|_{s+\frac12} \|(\alpha_1,\beta_1)\|_{2} 
	 + C(s) \|(a,b)\|_{s+ \frac32} 
	\|(\alpha,\beta)\|_{2}\|(\alpha_1,\beta_1)\|_{2} \nonumber 
	\end{align}
	and, in addition
\be\label{dSa-dS0small}
\Big\| \Big( \diff S (a,b)-\diff S (0,0) \Big) [\alpha,\beta] \Big\|_s 
\leq C(s) \| (a,b) \|_{2} \| (\alpha,\beta) \|_{s+\frac12} 
+ C(s) \| (a,b) \|_{s+ \frac32} \| (\alpha,\beta) \|_{2} \, . 
\ee
\end{lem}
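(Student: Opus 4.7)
The plan is to reduce \eqref{eq:tamedS^2} to repeated applications of the FIO tame bound Lemma~\ref{lem:Estargene} (exactly as done for $\diff S$ in Lemma~\ref{lem:tamedS}), and then to deduce \eqref{dSa-dS0small} from it by the fundamental theorem of calculus. Starting from \eqref{eq:coefficientsdS^2}, I would substitute the representation $J_1^{(m)}(\theta) = \tfrac12\bigl(e^{i\theta}r_m(\theta) + \overline{e^{i\theta}r_m(\theta)}\bigr)$ provided by \eqref{e:Jr}, \eqref{e:rn} for $m=0,1,2$, and split the composite phase using $A(x)=A_*+a(x)$, $B(x)=x+b(x)$ as
\[
e^{\pm i k A(x)} e^{-ikB(x)} = e^{\pm i k A_*}\, e^{-i k (x + b(x) \mp a(x))}.
\]
This expresses $\diff^2 S(a,b)[(\alpha,\beta),(\alpha_1,\beta_1)]$ as a finite sum of operators of the form $E^*$ in \eqref{eq:E*App}, with $u(x)$ equal (up to a constant factor) to one of the bilinear products $\alpha\alpha_1$, $\beta\beta_1$, $\alpha\beta_1$, $\alpha_1\beta$; phase correction $p(x) = b(x) \mp a(x)$ which, by the smallness $\|(a,b)\|_3 < \delta$, satisfies $\|p\|_{C^{s_0+1}} < 1/2$ and $\|p\|_{s_0+2} \leq 1$; and symbol $h(x,k) = c_\pm\, k\, e^{\mp i k A_*}\, \rho_m(\pm k A(x))$ with $\rho_m \in \{r_m, \overline{r_m}\}$.

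By Lemma~\ref{lem:Sobosimbol} applied to $\ell = \rho_m$, which satisfies \eqref{eq:1lemma1s2} with parameter $-\tfrac12$ by \eqref{e:rn}, together with $|e^{\mp i k A_*}| = 1$, the symbol $h$ has order $\sigma = \tfrac12$ with $|h|_{\frac12, s} \leq C(s)(1 + \|a\|_s)$. Lemma~\ref{lem:Estargene} with $s_0 = 1$ and $\sigma = \tfrac12$, valid for $s \geq \max\{1,\,s_0+1-\sigma\} = \tfrac32$, then yields, after absorbing $(1+\|a\|_2)\lesssim 1$ into constants via smallness,
\[
\|E^* u\|_s \leq C(s)\, \|u\|_{s+\frac12} + C(s)\bigl(1 + \|(a,b)\|_{s+\frac32}\bigr)\|u\|_{2}.
\]
Combining the tame product estimate \eqref{eq:tameproduct} for the bilinear $u$ (using $\|\cdot\|_{s_0} \leq \|\cdot\|_2$) with the algebra property gives
\[
\|u\|_{s+\frac12} \lesssim_s \|(\alpha,\beta)\|_{s+\frac12}\|(\alpha_1,\beta_1)\|_2 + \|(\alpha,\beta)\|_2\|(\alpha_1,\beta_1)\|_{s+\frac12}, \qquad \|u\|_2 \leq C\|(\alpha,\beta)\|_2\|(\alpha_1,\beta_1)\|_2;
\]
for $s \geq \tfrac32$ the residual $\|(\alpha,\beta)\|_2\|(\alpha_1,\beta_1)\|_2$ is dominated by $\|(\alpha,\beta)\|_2\|(\alpha_1,\beta_1)\|_{s+\frac12}$, yielding exactly \eqref{eq:tamedS^2}.

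Finally, \eqref{dSa-dS0small} follows by writing
\[
\bigl(\diff S(a,b) - \diff S(0,0)\bigr)[\alpha,\beta] = \int_0^1 \diff^2 S(ta,tb)[(a,b),(\alpha,\beta)]\,\diff t,
\]
applying \eqref{eq:tamedS^2} to the integrand with $(\alpha_1,\beta_1)=(a,b)$ and base point $(ta,tb)$, using smallness of $\|(a,b)\|_2$ to absorb the cubic correction into $\|(a,b)\|_{s+\frac32}\|(\alpha,\beta)\|_2$, and bounding $\|(a,b)\|_{s+\frac12}\leq \|(a,b)\|_{s+\frac32}$. The main technical point is the bookkeeping of orders: the extra factor $k$ inside $\diff^2 S$ raises the Bessel symbol's order from $-\tfrac12$ (as in $\diff S$) to $+\tfrac12$, so the FIO estimate loses half a derivative (producing $s+\tfrac12$ on the high-norm side) rather than gaining one, and one must apply the asymmetric product inequality with the low index $2 = s_0+1$ to match Lemma~\ref{lem:Estargene}'s $\|u\|_{s_0+1}$ slot and recover the precise form of \eqref{eq:tamedS^2}.
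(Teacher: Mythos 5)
Your proposal is correct and follows essentially the same route as the paper: both decompose $\diff^2 S(a,b)$ via \eqref{e:rn} into a finite sum of adjoint FIOs $E^*$ with bilinear inputs $u\in\{\alpha\alpha_1,\beta\beta_1,\alpha\beta_1,\alpha_1\beta\}$, phases $p=b\pm a$, and symbols of order $\tfrac12$ estimated by Lemmata \ref{lem:J1} and \ref{lem:Sobosimbol}, then apply Lemma \ref{lem:Estargene} (valid for $s\geq\max\{1,s_0+1-\sigma\}=\tfrac32$) together with the product estimate \eqref{eq:tameproduct}, and finally obtain \eqref{dSa-dS0small} from the fundamental theorem of calculus. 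Your bookkeeping of the extra factor $k$ raising the symbol order from $-\tfrac12$ to $+\tfrac12$, and the absorption of the residual low-norm term for $s\geq\tfrac32$, match the paper's (terser) argument.
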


\begin{proof}
From \eqref{eq:coefficientsdS^2} we deduce that 
$\diff^2 S(a,b)$ is sum of operators of the form $E^*$ in \eqref{eq:E*App} with 
$u $ one of the functions $ \alpha \alpha_1,\beta\beta_1,\alpha\beta_1,\alpha_1\beta$ 
and symbols $h$ as in \eqref{eq:symbolh} of order $\sigma=1/2$  by Lemmata \ref{lem:J1} and \ref{lem:Sobosimbol}, satisfying 
$ | h |_{\frac12,s} \lesssim_s 1 + \| a \|_s $ for any $ s \geq 0 $, 
cfr.~Lemma \ref{lem:tameS}. Therefore Lemma \ref{lem:Estargene} 
with $ p(x) = b(x) \pm a(x) $ (which satisfies 
$ \| p \|_{C^{s_0+1}}, \| p \|_{s_0+2} \leq 1 $)  implies,
for any $ s   \geq \max\{1,s_0 + 1 - \sigma \} = 3/ 2  $,  the tame estimates \eqref{eq:tamedS^2}. 

The bound \eqref{dSa-dS0small}
follows by writing  
$ \diff S (a,b)-\diff S (0,0) = \int_0^1 \diff^2 S ( \tau a, \tau b) [a,b] \, \diff \tau $  
and using \eqref{eq:tamedS^2}.
\end{proof}

\noindent {\bf Adjoint operator $\diff S^*$. }
We now write the adjoint of the operator 
$\diff  S (a,b)  $ defined in \eqref{diffSab}
with respect to the $L^2$-scalar product.

\begin{lem} {\bf (The adjoint $\diff S^*$ and $ (\diff^2 S)^*$)} \label{lem:adjo}
For any  $\gamma \in C^\infty_0(\T)$ it results
\begin{equation}\label{eq:dS*}
\diff  S (a,b)^* [\gamma] = 2 \pi \Big ( \sum_{j\neq0} J_1'( jA(x)) e^{ij B(x)} \widehat \gamma(j), i \sum_{j\neq 0} J_1(jA(x)) e^{ i j B(x)} \widehat\gamma(j)\Big ).
\end{equation}
Moreover
\begin{equation}\label{eq:d2S*}
\big( \diff^2  S (a,b)[\alpha,\beta] \big)^* [\gamma] = 
2 \pi 
\begin{pmatrix} 
 \sum_{j\neq0} j 
\big( J_1'' ( jA(x)) \alpha (x) + i J_1'(j A(x)) \beta (x) \big) 
e^{ij B(x)} \widehat \gamma(j) \\
 \sum_{j\neq 0} j 
\big(   i J_1' ( jA(x)) \alpha (x) -J_1(j A(x)) \beta (x) \big)  e^{ i j B(x)} \widehat\gamma(j)  
\end{pmatrix} \, .
\end{equation}
\end{lem}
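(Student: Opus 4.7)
The proof is essentially a direct computation using Parseval's identity and the fact that $J_1$, $J_1'$, $A$ and $B$ are all real-valued. My plan is to test the adjoints against arbitrary $(\alpha,\beta)$ and $\gamma$, expand $\gamma = \sum_{j\neq 0}\widehat\gamma(j)e_j$ (the sum starts from $j\neq 0$ since $\gamma\in C^\infty_0(\T)$ has zero mean), and then swap the order of integration and summation to read off the adjoint from the identity
\[
\bigl(\diff S(a,b)[\alpha,\beta],\gamma\bigr)_{L^2} \;=\; \bigl((\alpha,\beta),\diff S(a,b)^*[\gamma]\bigr)_{L^2\times L^2}.
\]

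\textbf{Computation of $\diff S^*$.} Starting from the Fourier expansion \eqref{diffSab}, Parseval gives
\[
\bigl(\diff S(a,b)[\alpha,\beta],\gamma\bigr)_{L^2} \;=\; \sum_{k\neq 0}\left(\int_\T\bigl[J_1'(kA(y))\alpha(y)-iJ_1(kA(y))\beta(y)\bigr]e^{-ikB(y)}\,\diff y\right)\overline{\widehat\gamma(k)}.
\]
I would now pull the sum inside the integral (this is licit for smooth $(a,b,\alpha,\beta,\gamma)$, since the $k$-sums converge absolutely by the decay estimates of Lemma~\ref{lem:J1}) and split the result into two pieces weighted by $\alpha(y)$ and $\beta(y)$:
\[
\int_\T \alpha(y)\sum_{k\neq 0}J_1'(kA(y))e^{-ikB(y)}\overline{\widehat\gamma(k)}\,\diff y \;-\; i\int_\T \beta(y)\sum_{k\neq 0}J_1(kA(y))e^{-ikB(y)}\overline{\widehat\gamma(k)}\,\diff y.
\]
Since $J_1$, $J_1'$, $A$ and $B$ are real, each of these inner sums is the complex conjugate of the corresponding sum appearing on the right-hand side of \eqref{eq:dS*}; matching coefficients against $\frac1{2\pi}\int_\T\alpha\overline{F_1}+\beta\overline{F_2}$ (and using $\overline{-i\cdot z}=i\overline z$ in the $\beta$-component) produces exactly \eqref{eq:dS*}.

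\textbf{Computation of $(\diff^2S)^*$.} The identical procedure applied to \eqref{eq:coefficientsdS^2} works verbatim. Testing with a direction $(\alpha_1,\beta_1)$ and an observable $\gamma$, one writes
\[
\bigl(\diff^2 S(a,b)[(\alpha,\beta),(\alpha_1,\beta_1)],\gamma\bigr)_{L^2}
\]
as a sum of four terms, each a triple integral/sum involving a product of two of $\alpha,\beta,\alpha_1,\beta_1$ together with one of $J_1'', J_1, J_1'$. Grouping the two terms containing $\alpha_1$ yields the first component of the adjoint and the two containing $\beta_1$ yields the second. The cross-terms $iJ_1'(kA)(\alpha\beta_1+\alpha_1\beta)$ contribute $iJ_1'(jA(x))\beta(x)$ to the $\alpha_1$-component and $iJ_1'(jA(x))\alpha(x)$ to the $\beta_1$-component, reproducing the mixed structure of \eqref{eq:d2S*}; the signs come out right because the factor $-i$ in front of the cross-term becomes $+i$ upon conjugation, exactly as in the $\diff S^*$ calculation.

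\textbf{Main obstacle.} There is no real analytic obstacle here: the entire argument is formal Fourier/Parseval manipulation for smooth data and reality of the Bessel function. The only care needed is book-keeping of the complex conjugates (the signs of $i$ and the reality of $J_1,J_1',A,B$) and the observation that $\widehat\gamma(0)=0$ eliminates the would-be $k=0$ term—this is what allows the adjoint to take values in $C^\infty(\T)\times C^\infty(\T)$ without restriction to the zero-mean subspace on the target side.
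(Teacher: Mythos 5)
Your proposal is correct and is essentially the paper's own argument: the paper simply tests $\diff S(a,b)[\alpha,\beta]$ against the basis vectors $e_j$ and reads off $\diff S(a,b)^*[e_j]$ from \eqref{diffSab} (and \eqref{eq:coefficientsdS^2} for the second differential), which is your Parseval computation specialized to $\gamma=e_j$; the conjugation book-keeping and the grouping of the cross-terms in $(\diff^2 S)^*$ check out.
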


\begin{proof}
By \eqref{diffSab}, for any $ j \in \Z $,    
 \begin{align*}
\big(  \diff S(a,b)[\alpha,\beta], e_j \big)_{L^2} 
=  \Big( (\alpha,\beta ), \underbrace{2 \pi  \Big ( J_1'( jA(x)) e^{ ij B(x)}, i J_1( j
A(x)) e^{ i j B(x)}\Big )}_{= \diff S(a,b)^* [e_j]} \Big)_{L^2 \times L^2}
\end{align*}
by  definition of adjoint, proving \eqref{eq:dS*}. 
Similarly \eqref{eq:d2S*} follows by \eqref{eq:coefficientsdS^2}.
\end{proof}

 The following tame estimates for the adjoint operator $\diff S^*$ hold. 

\begin{lem}
{\bf (Tame estimates for $ (\diff S)^* $ and $ (\diff^2 S)^* $)}\label{dSadj}
There is $ \delta > 0 $ such that if $ \| (a,b) \|_{3} < \delta $,  
	then, 
for any $s\geq 2$,  we have, for any $ \gamma \in H^{s- \frac12}_0(\T) $,  
\begin{equation}\label{eq:tamedS*}
\|\diff S(a,b)^*[\gamma]\|_s \leq C(s) \|\gamma\|_{s- \frac12} + 
C(s) \|(a,b)\|_s \|\gamma\|_{\frac32} \, .
\end{equation}
Moreover, for any $s\geq 2$, 
\be\label{d2Sstar}
\begin{aligned}
&	\Big\| \Big( \diff^2 S  (a,b) (\alpha,\beta) \Big)^* [\gamma] \Big\|_{s} \leq
	\\
& C(s) \|(\alpha,\beta)\|_{2}\| \gamma \|_{s+\frac12}  +
 C(s)	\|(\alpha,\beta)\|_{s} \| \gamma \|_{\frac52}
+ C(s) \|(a,b)\|_{s} 
\|(\alpha,\beta)\|_{2}\| \gamma \|_{\frac52} 
	\end{aligned}
\ee
and
	\be\label{dS0d2}
\Big\| \Big( \diff S^* (a,b)-\diff S^* (0,0) \Big) \gamma  \Big\|_s 
\leq C(s) \| (a,b) \|_{2} \| \gamma \|_{s+\frac12} 
+ C(s) \| (a,b) \|_{s} \| \gamma \|_{\frac52} \, . 
	\ee
\end{lem}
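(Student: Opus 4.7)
The plan is to treat each component in \eqref{eq:dS*} and \eqref{eq:d2S*} as a Fourier Integral Operator of the form $E$ in \eqref{eq:EApp}, with phase $p(x)=b(x)$ (so that $x+p(x)=B(x)$) and symbol expressed in terms of Bessel functions evaluated at $jA(x)$, and then apply the FIO tame estimate of Lemma \ref{lem:Eusmall} combined with the symbol bound of Lemma \ref{lem:Sobosimbol}.

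\textbf{Step 1: Estimate \eqref{eq:tamedS*}.} Looking at \eqref{eq:dS*}, each of the two components of $\diff S(a,b)^*[\gamma]$ is of the form $Eu$ in \eqref{eq:EApp} with $u=\gamma$, phase $p(x)=b(x)$, and symbol $h(x,j)=2\pi J_1'(jA(x))$ (resp.\ $h(x,j)=2\pi i J_1(jA(x))$) for $j\neq 0$, $h(x,0)=0$. By Lemma \ref{lem:J1} the functions $J_1$ and $J_1'$ are of order $-1/2$ in the sense of \eqref{eq:1lemma1s2}, so Lemma \ref{lem:Sobosimbol} gives $|h|_{-1/2,s}\lesssim_s 1+\|a\|_s$ for every $s\geq 0$. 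Provided $\delta$ is small enough so that Lemma \ref{lem:tamecomposition} applies to $p=b$, we invoke Lemma \ref{lem:Eusmall} with $\sigma=-1/2$ and $s_0=1$: for $s\geq s_0+1=2$,
\[
\|\diff S(a,b)^*\gamma\|_s \leq C(s)|h|_{-1/2,2}\|\gamma\|_{s-\frac12}+C(s)\bigl(|h|_{-1/2,s}+\|b\|_s |h|_{-1/2,2}\bigr)\|\gamma\|_{\frac32}.
\]
Using $|h|_{-1/2,2}\lesssim 1$ and $|h|_{-1/2,s}\lesssim_s 1+\|a\|_s$, and absorbing the leftover $C(s)\|\gamma\|_{3/2}$ into $C(s)\|\gamma\|_{s-1/2}$ (valid since $s\geq 2$), yields exactly \eqref{eq:tamedS*}.

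\textbf{Step 2: Estimate \eqref{d2Sstar}.} The components of $(\diff^2 S(a,b)[\alpha,\beta])^*[\gamma]$ in \eqref{eq:d2S*} are again of the form $E\gamma$ with phase $p=b$, but now the symbols carry an extra factor of $j$ together with a multiplication by $\alpha$ or $\beta$, e.g.\ $h(x,j)=2\pi j J_1''(jA(x))\alpha(x)$. The Bessel factor alone still has order $-1/2$, so the product has order $\sigma=+1/2$. Combining the product estimate \eqref{eq:tameproduct} with Lemma \ref{lem:Sobosimbol} applied to $J_1''$ (whose derivative bounds \eqref{e:rn} ensure the hypothesis \eqref{eq:1lemma1s2}) gives
\[
|h|_{\frac12,s} \lesssim_s (1+\|a\|_s)\|\alpha\|_{s_0}+(1+\|a\|_{s_0})\|\alpha\|_s \lesssim_s \|(\alpha,\beta)\|_s + \|a\|_s\|(\alpha,\beta)\|_2,
\]
using $\|a\|_{s_0}\leq\delta$. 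At $s=2$ this gives $|h|_{1/2,2}\lesssim \|(\alpha,\beta)\|_2$. Applying Lemma \ref{lem:Eusmall} with $\sigma=1/2$ and $s_0=1$ (so $s_0+\sigma+1=5/2$) for $s\geq 2$:
\[
\|(\diff^2 S(a,b)[\alpha,\beta])^*\gamma\|_s \leq C(s)|h|_{\frac12,2}\|\gamma\|_{s+\frac12}+C(s)\bigl(|h|_{\frac12,s}+\|b\|_s|h|_{\frac12,2}\bigr)\|\gamma\|_{\frac52},
\]
and plugging in the symbol bounds above yields \eqref{d2Sstar}.

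\textbf{Step 3: Estimate \eqref{dS0d2}.} Since adjunction is $\R$-linear, the fundamental theorem of calculus applied to the map $(a,b)\mapsto \diff S(a,b)$ gives
\[
\diff S(a,b)^*-\diff S(0,0)^* = \int_0^1 \bigl(\diff^2 S(\tau a,\tau b)[a,b]\bigr)^*\, \diff\tau.
\]
Applying \eqref{d2Sstar} with the base point $(\tau a,\tau b)$ (which satisfies $\|(\tau a,\tau b)\|_3\leq\delta$) and perturbation direction $(\alpha,\beta)=(a,b)$, the third term in \eqref{d2Sstar} becomes $C(s)\tau\|(a,b)\|_s\|(a,b)\|_2\|\gamma\|_{5/2}$, which, using $\|(a,b)\|_2\leq\delta$, is absorbed into the second term. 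Integrating in $\tau$ produces exactly \eqref{dS0d2}.

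The main technical point is simply the correct bookkeeping of the order $\sigma$ of each symbol (namely $-1/2$ for $\diff S^*$ and $+1/2$ for $(\diff^2 S)^*$) and the corresponding loss/gain in derivatives; no conceptual obstacle arises beyond what has already been handled for $\diff S$ in Lemma \ref{lem:tamedS}, because \eqref{eq:dS*}--\eqref{eq:d2S*} are literally FIOs of type $E$ rather than $E^*$, so Lemma \ref{lem:Eusmall} (not Lemma \ref{lem:Estargene}) is the relevant tool.
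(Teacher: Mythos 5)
Your overall strategy (view each component of \eqref{eq:dS*} and \eqref{eq:d2S*} as an FIO of type $E$ and apply Lemma \ref{lem:Eusmall} with Lemma \ref{lem:Sobosimbol}, then get \eqref{dS0d2} by the fundamental theorem of calculus) is the same as the paper's, and your Step 3 is fine. But Step 1 contains a genuine gap that propagates to Step 2: you take the phase to be $p(x)=b(x)$ and the symbol to be $h(x,j)=2\pi J_1'(jA(x))$ (resp.\ $2\pi i J_1(jA(x))$), claiming that ``by Lemma \ref{lem:J1} the functions $J_1$ and $J_1'$ are of order $-1/2$ in the sense of \eqref{eq:1lemma1s2}.'' This is false. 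Condition \eqref{eq:1lemma1s2} with $\sigma=-\tfrac12$ requires $|\ell^{(m)}(\theta)|\lesssim\langle\theta\rangle^{-(m+\frac12)}$, i.e.\ each derivative must gain one power of decay. That holds for the amplitude $r$ of Lemma \ref{lem:J1} (estimate \eqref{e:r}), but not for $J_1=\mathrm{Re}(e^{i\theta}r(\theta))$ itself: by item (3) of Lemma \ref{lem:J1}, $J_1^{(n)}(\theta)=\mathrm{Re}(e^{i\theta}r_n(\theta))$ only decays like $\langle\theta\rangle^{-\frac12}$ for every $n$, because differentiating brings down a factor $i$ from $e^{i\theta}$ without improving the decay. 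Consequently Lemma \ref{lem:Sobosimbol} does not apply to $\ell=J_1$ with $\sigma=-\tfrac12$, and in fact your symbol norm is infinite: each $x$-derivative of $J_1(jA(x))$ produces a term $jA'(x)J_1'(jA(x))$ of size $|j|\,\langle j\rangle^{-\frac12}$, so $\sup_j\langle j\rangle^{\frac12}\|J_1(jA(\cdot))\|_s=+\infty$ for $s\geq1$ whenever $a$ is non-constant. The same problem invalidates the bound $|h|_{\frac12,s}\lesssim_s(1+\|a\|_s)\|\alpha\|_{s_0}+\dots$ in Step 2 for $h(x,j)=2\pi jJ_1''(jA(x))\alpha(x)$.

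The repair is exactly the decomposition the paper uses: write $J_1(\theta)=\tfrac12\big(e^{i\theta}r(\theta)+e^{-i\theta}\overline{r(\theta)}\big)$ (and similarly for $J_1'$, $J_1''$ via \eqref{e:rn}), so that $J_1(jA(x))e^{ijB(x)}=\tfrac12\big(e^{ijA_*}r(jA(x))e^{ij(x+b(x)+a(x))}+e^{-ijA_*}\overline{r(jA(x))}e^{ij(x+b(x)-a(x))}\big)$. Each component of $\diff S(a,b)^*$ then becomes a \emph{sum of two} FIOs of type $E$ with phases $p(x)=b(x)\pm a(x)$ and symbols built from $r(jA(x))$, $\overline{r(jA(x))}$, which genuinely satisfy \eqref{eq:1lemma1s2} with $\sigma=-\tfrac12$, so that Lemma \ref{lem:Sobosimbol} yields $|h|_{-\frac12,s}\lesssim_s 1+\|a\|_s$ and Lemma \ref{lem:Eusmall} then gives \eqref{eq:tamedS*} for $s\geq s_0+1=2$. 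The analogous splitting with symbols of order $+\tfrac12$ (the extra factor $j$ times an order-$(-\tfrac12)$ amplitude, multiplied by $\alpha$ or $\beta$ via \eqref{eq:tameproduct}) gives \eqref{d2Sstar}; once that is in place your Step 3 goes through unchanged.
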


\begin{proof}
Each component of the operator 
$\diff S(a,b)^*$ in \eqref{eq:dS*} is the sum of FIO 
of the form $E$ as in \eqref{eq:EApp}, with  symbols of order $-1/2$, by  Lemma \ref{lem:J1}. 
For instance, recalling that $ J_1 (\theta ) = \mathrm {Re} ( e^{i \theta} r(\theta) ) $, 
the second component is equal to
\[
 \pi i \left ( \sum_{j\neq 0} r( jA(x)) e^{ i j A_*}
e^{ i j (x + b(x) + a(x))} \widehat\gamma(j) -  \sum_{j\neq 0} 
\overline{r(jA(x))} e^{- i jA_* } e^{i j (x+b(x) -a(x))} 
\widehat\gamma(j)\right )
\]
and \eqref{eq:tamedS*} follows from Lemma \ref{lem:Eusmall}  with $ u \equiv \gamma $,
for any $ s \geq s_0 + 1 = 2 $, 
with a symbol satisfying 
$ | h |_{-\frac12,s} \lesssim_s 1 + \| a \|_s $ for any $ s \geq 0 $, by using Lemmata
\ref{lem:J1} and \ref{lem:Sobosimbol}, cfr.~the proof of Lemma \ref{lem:tameS}. 
 The estimates \eqref{d2Sstar} follow similarly by \eqref{eq:d2S*} and 
Lemmata \ref{lem:J1}, \ref{lem:Sobosimbol} and \ref{lem:Eusmall}   with 
symbols of order $1/ 2$ satisfying 
$ | h |_{\frac12,s} \lesssim_s \| (\alpha, \beta)  \|_s + \| a \|_s \| (\alpha, \beta) \|_{s_0}$ 
for any $ s \geq s_0 $.
\end{proof}

We also provide the matrix representation of the composite operator 
$ \diff  S (a,b) \circ \diff  S(a,b)^* $. 
\begin{lem}\label{matrixentries}
For any $ \gamma \in C^\infty_0 (\T) $ it results  
\begin{align}\label{eq:dSdS*}
& \diff  S (a,b) \circ \diff  S(a,b)^*[\gamma] \\
			&= 2 \pi\,  \sum_{j,k\neq 0} \left ( \int_\T 
			\Big ( J_1(kA(x))J_1(jA(x)) + 
			J_1'( kA(x))J_1'( jA(x))\Big ) 
			e^{ i (j-k) B(x)} \, \diff x \right ) 
			\widehat\gamma(j) e_k \, . \nonumber 
\end{align}
For $ (a,b) = (0,0) $ it reduces to the diagonal operator
 \be\label{dSdS0}
\diff S(0,0)\circ\diff S(0,0)^*[\gamma]
= 4 \pi^2 \,  \sum_{j\neq 0} \Big ( J_1( jA_*)^2 
+ J_1'( jA_*)^2\Big )\widehat\gamma(j) e_j \, . 
\ee
In particular, $ \diff S(0,0)\circ\diff S(0,0)^* \colon H^{s-1}_0(\T)\to H^{s}_0(\T) $ is a bounded invertible operator for any $s\in \R$.
\end{lem}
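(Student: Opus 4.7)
The statement is a direct computation together with an application of the bounds on Bessel functions from Lemma~\ref{lem:J1}. The plan has three short steps.

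First, I would simply substitute the explicit formula \eqref{eq:dS*} for $\diff S(a,b)^*[\gamma]$ into the formula \eqref{diffSab} for $\diff S(a,b)[\alpha,\beta]$. Writing $\diff S(a,b)^*[\gamma]=(\alpha,\beta)$ with
\[
\alpha(x)=2\pi\sum_{j\neq 0} J_1'(jA(x))\, e^{ijB(x)}\widehat\gamma(j),\qquad \beta(x)=2\pi i\sum_{j\neq 0} J_1(jA(x))\, e^{ijB(x)}\widehat\gamma(j),
\]
the integrand in \eqref{diffSab} becomes
\[
\bigl[J_1'(kA(x))\alpha(x)-iJ_1(kA(x))\beta(x)\bigr]e^{-ikB(x)}
=2\pi\!\!\sum_{j\neq 0}\!\bigl[J_1'(kA(x))J_1'(jA(x))+J_1(kA(x))J_1(jA(x))\bigr]\, e^{i(j-k)B(x)}\,\widehat\gamma(j).
\]
Exchanging the (finite for each $k$) sum with the integral and collecting on the basis $\{e_k\}_{k\neq 0}$ yields \eqref{eq:dSdS*}. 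The two $i$'s from the second summand multiply to $-1$ and combine with the minus sign in $-iJ_1(kA)\beta$, which is why both terms appear with a plus.

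Second, I would specialize to $(a,b)=(0,0)$, so that $A(x)\equiv A_*$ and $B(x)=x$ (using $\mathrm{id}_\T$). Then the only $x$-dependence of the integrand in \eqref{eq:dSdS*} is through $e^{i(j-k)x}$, and the Fourier orthogonality $\int_\T e^{i(j-k)x}\,\diff x=2\pi\,\delta_{j,k}$ collapses the double sum to a single one:
\[
\diff S(0,0)\circ\diff S(0,0)^*[\gamma]=2\pi\cdot 2\pi\sum_{j\neq 0}\bigl(J_1(jA_*)^2+J_1'(jA_*)^2\bigr)\widehat\gamma(j)\,e_j,
\]
which is \eqref{dSdS0}. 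In particular this is the Fourier multiplier with symbol $\mu(j):=4\pi^2\bigl(J_1(jA_*)^2+J_1'(jA_*)^2\bigr)$ supported on $j\neq 0$.

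Third, for the invertibility statement on $H^{s-1}_0(\T)\to H^s_0(\T)$ I would apply the two-sided bound \eqref{e:JJ} of Lemma~\ref{lem:J1}, with some fixed $\epsilon\in(0,A_*)$: since $|jA_*|\geq A_*>\epsilon$ for all $j\neq 0$, there exist constants $0<c<C$ (depending on $A_*$) such that
\[
c\,\langle j\rangle^{-1}\leq \mu(j)\leq C\,\langle j\rangle^{-1}\qquad\forall\, j\in\Z\setminus\{0\}.
\]
Hence $\diff S(0,0)\circ\diff S(0,0)^*$ is a Fourier multiplier of order exactly $-1$ on the zero-mean subspace, with inverse the Fourier multiplier $1/\mu(j)$ that is bounded from $H^s_0(\T)$ to $H^{s-1}_0(\T)$ for every $s\in\R$. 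There is no real obstacle here: the whole content is the explicit composition plus the non-vanishing of $J_1^2+(J_1')^2$ away from zero, which Lemma~\ref{lem:J1}(4) provides via the ODE argument.
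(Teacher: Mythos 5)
Your proposal is correct and follows essentially the same route as the paper, which likewise just substitutes \eqref{eq:dS*} into \eqref{diffSab} and invokes \eqref{e:JJ} for the invertibility; your computation of the signs and of the $4\pi^2$ factor via Fourier orthogonality is right. One trivial correction: the sum over $j$ is not finite, but the interchange with the integral is still immediate since $\gamma\in C^\infty_0(\T)$ has rapidly decaying Fourier coefficients and $J_1$, $J_1'$ are bounded.
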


\begin{proof}
By \eqref{eq:dS*} and \eqref{diffSab} 
we readily deduce \eqref{eq:dSdS*}.  The last statement follows by  \eqref{e:JJ}. 
\end{proof}

In order to apply the Nash--Moser implicit function theorem, a key step is to 
prove the existence of a right inverse for $\diff S (a,b) $ in a neighborhood of $(0,0)$ in
a low norm. 
This will be done in the next section showing that the composition 
$\diff S(a,b)\circ \diff S^* (a,b)$ is invertible in a neighborhood of $(0,0)$ and 
 taking the right inverse  
\be\label{Rab}
R(a,b) := \diff S(a,b)^* \circ (\diff S (a,b)\circ \diff S(a,b)^*)^{-1}  
\ee
of $\diff S (a,b) $.

\section{Invertibility of $ \diff S (a,b) \circ \diff S^* (a,b) $ and Proof of
Theorem \ref{thm:ri}}\label{dSdSstar}

In this section we prove Theorem \ref{thm:ri}. 
The next key Proposition \ref{prop:key} 
proves the invertibility 
of the  real self-adjoint operator 
\be\label{opAab}
M := M(a,b) := \diff S (a,b) \circ \diff S^* (a,b)
\ee
and that its inverse satisfies  tame estimates. 

\begin{prop}\label{prop:key} 
{\bf (Inverse of $ \diff S (a,b) \circ  \diff S^* (a,b)$)}
There exists $ \delta > 0 $ such that, if 
\be\label{eq:alphabsmall}
\| (a,b) \|_{6} < \delta
\ee
then
the operator  $ M(a,b) $ in \eqref{opAab}
 is invertible and, for any  $ s \geq 2 $
and any $ \gamma \in H^{s+1 }_0(\T) $, 
\begin{equation}\label{Mab-1}
\| M(a,b)^{-1}  \gamma \|_s 
 \leq C (s) \|  \gamma \|_{s+1} +  C(s) \| (a, b) \|_{s+4} \|  \gamma  \|_{3}    \, . 
\end{equation}
\end{prop}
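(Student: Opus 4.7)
The plan is to invert $M(a,b)$ by a high/low Fourier mode decomposition combined with a Neumann series argument for the high-frequency block. A direct perturbation of $M(0,0)$, which is the diagonal operator in \eqref{dSdS0} and is bounded below by $\langle j\rangle^{-1}$ thanks to \eqref{e:JJ}, fails because Lemma \ref{lem:tamedS2} only gives $\|M(a,b)-M(0,0)\|$ small as an operator with loss of one derivative; this is the manifestation of the fact that $\diff S(a,b)$ is a sum of FIOs with different phases (Lemma \ref{dSadj}), so $M(a,b)$ is only pseudo-differential \emph{at the highest order}.

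I would proceed in four steps. \textbf{Step 1 (Resolvent splitting).} Fix a large threshold $N\in\N$ and the projectors $\Pi_L u=\sum_{|j|\le N}\widehat u(j)e_j$, $\Pi_R=\mathrm{Id}-\Pi_L$. Expand $M=\Pi_L M\Pi_L+\Pi_L M\Pi_R+\Pi_R M\Pi_L+\Pi_R M\Pi_R$ and apply the resolvent identity (Lemma \ref{lem:resol}). Since $\Pi_L M(0,0)\Pi_L$ is diagonal and invertible and $\|M(a,b)-M(0,0)\|$ on the finite-dimensional low-mode block is controlled by Lemma \ref{lem:tamedS2}, the Schur complement reduction shows that inverting $M$ is equivalent to inverting on the high-mode block an operator $\tilde M_R^R$ which equals $\Pi_R M(a,b)\Pi_R$ modulo a small finite-rank operator carrying the coupling $\Pi_L M\Pi_R$, $\Pi_R M\Pi_L$ (Lemma \ref{lem:coupling}). \textbf{Step 2 (Diagonal extraction).} On $\Pi_R L^2_0$, write $\tilde M_R^R=\mathcal D(\mathrm{Id}+\mathcal R)$ with $\mathcal D$ the invertible diagonal extracted as in Lemma \ref{lemD} (a perturbation of $M(0,0)\Pi_R$, so $\mathcal D^{-1}$ is a Fourier multiplier of order $+1$ by \eqref{e:JJ}), and $\mathcal R=\mathcal D^{-1}\mathcal N$ with $\mathcal N$ off-diagonal.

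\textbf{Step 3 (Asymmetric tame estimates with smallness for $\mathcal R$).} This is the heart of the argument. One must show
\[
\|\mathcal R\gamma\|_s \le C(s_1,N)\|\gamma\|_s+C(s,N)\|\gamma\|_{s_1},\qquad \forall s\ge s_1,
\]
with $C(s_1,N)$ arbitrarily small for $N$ large and $\|(a,b)\|_6$ small. Using the Fourier expansion \eqref{eq:dSdS*} of the matrix entries of $M(a,b)$ and the asymptotic expansion of the Bessel function $J_1$ provided by Lemma \ref{lem:J1}, split $\mathcal R=\mathcal R_1+\mathcal R_2+\mathcal R_3$: a pseudo-differential top-order piece $\mathcal R_1$ arising from products like $r(kA)\overline{r(jA)}$ with matching phases that cancel in $e^{i(j-k)B}$; a small finite rank remainder $\mathcal R_2$ from Step 1; and a genuinely FIO piece $\mathcal R_3$, coming from the cross-terms with unmatched phases, whose symbol has order strictly smaller than that of $\mathcal R_1$. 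For $\mathcal R_1$ the asymmetric estimate follows from Lemma \ref{lem:tamepseudodiff} and the $N^{-1}$ gain from Lemma \ref{lem:piccolosimbolo} applied to its high-frequency-supported symbol. For $\mathcal R_2$ the regularising bound \eqref{Rure} suffices. For $\mathcal R_3$, Lemmata \ref{lem:Eusmall}--\ref{lem:Estargene} give only \emph{standard} tame estimates (cfr.\ Remark \ref{r:nona}); here the extra regularising order is exploited via the asymmetric interpolation inequality \eqref{asintpo} (with $\epsilon=1/C(s,N)$) to trade the top derivative on the symbol against a fixed low norm of $\gamma$, producing the desired asymmetric form, while smallness is again obtained by combining Lemma \ref{lem:piccolosimbolo} (support on $|j|>N$) with the smallness of $\|(a,b)\|_6$ coming from Lemma \ref{lem:Sobosimbol}.

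\textbf{Step 4 (Conclusion and counting of derivatives).} Once $N$ is fixed large and $\delta$ small enough that $C(s_1,N)<1/4$, Lemma \ref{lem:invertame} inverts $\mathrm{Id}+\mathcal R$ with the tame bounds \eqref{1+Rs0}--\eqref{1+Rstame}. Composing with $\mathcal D^{-1}$ (which costs one derivative, giving the $\|\gamma\|_{s+1}$ in \eqref{Mab-1}) and reassembling via the Schur formula from Step 1 produces $M(a,b)^{-1}$ satisfying \eqref{Mab-1}. The loss of $4$ derivatives on $(a,b)$ is tracked by combining the tame estimates of Lemmata \ref{lem:tamedS} and \ref{dSadj}, which each cost loss $\tfrac12$ in the regularity of $(a,b)$ and $\tfrac12$ in the frequency, together with the contribution of order $+1$ from $\mathcal D^{-1}$ and of order $+2$ from applying these bounds twice inside the composition $\diff S\circ\diff S^*$. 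The principal obstacle, and the only point where genuinely new ideas beyond Guillemin's framework are required, is Step 3 on $\mathcal R_3$: absent pseudo-differential calculus, the asymmetric estimate is recovered only by simultaneously exploiting the higher regularising order of $\mathcal R_3$ and the high-frequency cut-off provided by $\Pi_R$.
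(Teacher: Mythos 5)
Your architecture coincides with the paper's: resolvent identity on the low/high splitting, extraction of an invertible diagonal $\mathcal D$ of order $-1$, decomposition $\mathcal R=\mathcal R_1+\mathcal R_2+\mathcal R_3$, and Neumann inversion via Lemma \ref{lem:invertame} once each $\mathcal R_i$ satisfies a $0$-tame estimate with small constant at the threshold regularity. Your treatment of $\mathcal R_2$ and $\mathcal R_3$ is also the paper's (finite-rank bounds for $\mathcal R_2$; for $\mathcal R_3$, the extra $\tfrac12$ order of the $E_1$-type pieces traded for $N^{-1/2}$ via Lemma \ref{lem:piccolosimbolo}, and the interpolation \eqref{asintpo} to convert standard tame estimates into asymmetric ones).

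There is, however, a genuine gap in your argument for $\mathcal R_1$. You claim its smallness comes from ``the $N^{-1}$ gain from Lemma \ref{lem:piccolosimbolo} applied to its high-frequency-supported symbol.'' That mechanism is not available here: $\mathcal R_1=\mathcal D^{-1}\mathcal N_1$ is an operator of order exactly $0$ (entries of $\mathcal N_1$ scale like $|j|^{-1/2}|k|^{-1/2}$ times off-diagonal decay, and $\mathcal D^{-1}$ has order $+1$), so there is no spare negative order to convert into a power of $N$; restricting an order-$0$ operator to frequencies $|j|>N$ does not make it small, no matter how large $N$ is. The correct source of smallness is entirely different: $\mathcal N_1$ is purely off-diagonal while $M(0,0)$ is diagonal, so every entry $T^j_k(a,b)$ with $j\neq k$ vanishes at $(a,b)=(0,0)$; writing $T^j_k(a,b)-T^j_k(0,0)=\int_0^1\frac{d}{d\tau}T^j_k(\tau a,\tau b)\,\diff\tau$ and applying the non-stationary-phase principle (Lemma \ref{lem:PS}) to the resulting oscillatory integrals with phases $e^{i(j-k)(B_\tau\pm A_\tau)}$ yields $|[\mathcal N_1]^j_k|\lesssim_s \|(a,b)\|_{s+1}\,|j|^{-1/2}|k|^{-1/2}|j-k|^{-(s-1)}$, hence $|\mathcal R_1|_{s}\lesssim_s\|(a,b)\|_{s+4}$ in the $s$-decay norm \eqref{sdecay}, and the tame constant at $s_1=2$ is $\lesssim\|(a,b)\|_6$, small by hypothesis \eqref{eq:alphabsmall}. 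Your proposal never invokes Lemma \ref{lem:PS} and never exploits that the off-diagonal entries carry a factor of $\|(a,b)\|$; as written, the Neumann series does not close for $\mathcal R_1$. (Relatedly, Lemma \ref{lem:tamepseudodiff} is not directly applicable since $\mathcal N_1$ is presented through its matrix entries rather than as $h(x,D)$; one needs the decay-norm machinery \eqref{tameMsa} instead.) The rest of your outline, including the derivative count in Step 4, is consistent with the paper once this point is repaired.
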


\noindent 
{\sc Proof of Theorem  \ref{thm:ri}.}
In view of  Proposition \ref{prop:key} for any $ \|(a, b)\|_{6} < \delta $, 
the right inverse $ R(a,b) $ in \eqref{Rab} is well defined 
and  
the tame estimates \eqref{Rabfinal} of the right-inverse operator $ R(a,b) $  defined in   
\eqref{Rab} follow by \eqref{eq:tamedS*} 
and \eqref{Mab-1}. $ \hfill \qed $

\medskip

We now start the proof of Proposition \ref{prop:key}. 

\paragraph{\bf The resolvent identity decomposition.}
Given $ N \in \N $, fixed in Proposition \ref{invpropAtilde} below, 
we apply a  resolvent identity procedure. Remind that $ M $ acts between spaces 
of $ 2 \pi $-periodic functions with  zero average. 
According to the orthogonal 
splitting 
\be\label{HHLHR}
\begin{aligned}
 H^s_0 (\T) = H_L \oplus H_R  \, , \quad
& H_L := \Big\{ u = \sum_{0<|j| \leq N} u_j e_j  \Big\} \, , \quad 
H_R := \Big\{ u = \sum_{|j| > N} u_j e_j  \in H^s_0 (\T)  \Big\} \, , 
\end{aligned}
\ee
the operator $ M $ is represented by the matrix of operators 
\be\label{Adeco}
M \equiv \begin{pmatrix}
M^L_L & M^R_L  \\
M^L_R & M^R_R
\end{pmatrix} 
\ee
where $ M_L^L := \Pi_L M \Pi_L  $,  $ M_R^L := \Pi_R M \Pi_L $ and 
$ M_R^R := \Pi_R M \Pi_R $,  $ M_L^R := \Pi_L M \Pi_R $ where 
$ \Pi_L $, $ \Pi_R $
denote respectively the $ L^2$-projectors on $ H_L $ and $ H_R $,  
\[ 
\Pi_L u = \sum_{0 < |j|\leq N} (u, e_j)_{L^2} e_j\, , \quad \Pi_R  u =  \sum_{|j| > N} (u, e_j)_{L^2} e_j \, . 
\]
The following simple algebraic lemma holds. 

\begin{lem}\label{lem:resol}
{\bf (Resolvent identity)}
Assume that $ M^L_L $ is invertible. Then $ M $ is invertible if and only if  
\begin{equation}\label{ARRsmall}
\tilde M_R^R :=  M_R^R -  M_R^L (M^L_L)^{-1} M^R_L
\end{equation}
is invertible and
\be\label{resoid}
M^{-1} = 
\begin{pmatrix}
(M^L_L)^{-1} + (M^L_L)^{-1} M^R_L (\tilde M_R^R)^{-1} M_R^L (M^L_L)^{-1}
& \quad - (M^L_L)^{-1} M^R_L (\tilde M_R^R)^{-1}   \\
 - (\tilde M_R^R)^{-1} M^L_R (M^L_L)^{-1}  & \quad (\tilde M_R^R)^{-1} 
\end{pmatrix} \, . 
\ee
\end{lem}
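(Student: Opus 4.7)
The plan is to reduce the claim to a single block Schur-complement factorization and then invert each block-triangular factor explicitly. Concretely, I would verify, by direct block multiplication and the definition of $\tilde M^R_R$, the identity
\begin{equation*}
M = \begin{pmatrix} M^L_L & 0 \\ M^L_R & \mathrm{Id} \end{pmatrix}\, \begin{pmatrix} \mathrm{Id} & (M^L_L)^{-1} M^R_L \\ 0 & \tilde M^R_R \end{pmatrix},
\end{equation*}
valid because $M^L_L$ is invertible by hypothesis. The only nontrivial block in the product is the bottom-right, where $M^L_R (M^L_L)^{-1} M^R_L + \tilde M^R_R = M^R_R$ by the very definition \eqref{ARRsmall}.

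Both factors are block-triangular on the orthogonal decomposition $H^s_0(\T)=H_L\oplus H_R$, and a block-triangular operator is invertible iff both diagonal blocks are. The left factor has diagonal blocks $M^L_L$ and $\mathrm{Id}_{H_R}$, hence is invertible by assumption; the right factor has diagonal blocks $\mathrm{Id}_{H_L}$ and $\tilde M^R_R$, hence is invertible iff $\tilde M^R_R$ is. This establishes the asserted equivalence between the invertibility of $M$ and that of $\tilde M^R_R$.

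To produce \eqref{resoid}, I would invert each factor by the standard formulas
\begin{equation*}
\begin{pmatrix} M^L_L & 0 \\ M^L_R & \mathrm{Id} \end{pmatrix}^{-1}=\begin{pmatrix} (M^L_L)^{-1} & 0 \\ -M^L_R (M^L_L)^{-1} & \mathrm{Id} \end{pmatrix},\qquad \begin{pmatrix} \mathrm{Id} & (M^L_L)^{-1} M^R_L \\ 0 & \tilde M^R_R \end{pmatrix}^{-1}=\begin{pmatrix} \mathrm{Id} & -(M^L_L)^{-1} M^R_L (\tilde M^R_R)^{-1} \\ 0 & (\tilde M^R_R)^{-1} \end{pmatrix},
\end{equation*}
and then multiply them in the reverse order, so that $M^{-1}$ equals the second inverse composed with the first. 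Reading off the four blocks of the resulting product reproduces \eqref{resoid} verbatim. There is no analytic obstacle here; the only thing to be careful about is the bookkeeping of source and target spaces in the paper's notation $M^L_L,M^R_L,M^L_R,M^R_R$, so that every composition $M^L_R (M^L_L)^{-1} M^R_L$, $(M^L_L)^{-1} M^R_L (\tilde M^R_R)^{-1} M^L_R (M^L_L)^{-1}$, etc., is well defined between the right subspaces.
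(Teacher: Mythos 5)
Your proof is correct and is the standard Schur--complement argument; the paper in fact states Lemma \ref{lem:resol} without any proof, so there is no alternative argument to compare against, and your $LU$-type factorization
$M=\bigl(\begin{smallmatrix} M^L_L & 0 \\ M^L_R & \mathrm{Id}\end{smallmatrix}\bigr)\bigl(\begin{smallmatrix} \mathrm{Id} & (M^L_L)^{-1}M^R_L \\ 0 & \tilde M^R_R\end{smallmatrix}\bigr)$
together with the explicit triangular inverses does reproduce \eqref{resoid} verbatim (I checked all four blocks, and your reading of the paper's super/subscript convention --- superscript for the source space, subscript for the target --- is the right one).

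One point deserves a word of caution. The blanket statement that ``a block-triangular operator is invertible iff both diagonal blocks are'' is \emph{false} in general on infinite-dimensional spaces: with $S$ the unilateral shift on $\ell^2$ and $P=\mathrm{Id}-SS^*$, the operator $\bigl(\begin{smallmatrix} S & P \\ 0 & S^*\end{smallmatrix}\bigr)$ is invertible (inverse $\bigl(\begin{smallmatrix} S^* & 0 \\ P & S\end{smallmatrix}\bigr)$) although neither diagonal block is. Your argument survives because in each of the two factors one diagonal block is already known to be invertible. The ``if'' direction is always fine (the explicit triangular inverse works). For the ``only if'' direction applied to the second factor, you should add the two-line verification: if $\bigl(\begin{smallmatrix} \mathrm{Id} & B \\ 0 & D\end{smallmatrix}\bigr)$ has a two-sided inverse $\bigl(\begin{smallmatrix} W & X \\ Y & Z\end{smallmatrix}\bigr)$, then the $(2,1)$ entry of the product $Q^{-1}Q$ gives $Y=0$, whence the $(2,2)$ entries of $Q^{-1}Q$ and $QQ^{-1}$ give $ZD=\mathrm{Id}$ and $DZ=\mathrm{Id}$, so $D=\tilde M^R_R$ is invertible. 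With that justification spelled out, the proof is complete.
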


As first step we  prove 
the invertibility of $ M $
restricted to low modes. 
\begin{lem}\label{invALL}
{\bf (Invertibility of $ M_L^L $)}
For any $ N \in \N $, 
if \eqref{eq:alphabsmall} holds with  $  \delta := \delta (N)$ small enough,
\[ 
M_L^L := M_L^L (a,b) = \Pi_L \diff S (a,b)  \diff S^* (a,b) \Pi_L 
\] 
is invertible and  $ \| (M_L^L)^{-1} \| \lesssim_N 1 $. 
\end{lem}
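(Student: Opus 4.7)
The plan is to leverage that $H_L$ is finite-dimensional (of complex dimension $2N$), so the invertibility of $M_L^L(a,b)$ will follow by a standard perturbative argument from the invertibility of the unperturbed operator $M_L^L(0,0)$. First, by formula \eqref{dSdS0} the operator $M(0,0)$ acts diagonally in the basis $\{e_j\}$ with eigenvalues $4\pi^2\big(J_1(jA_*)^2+J_1'(jA_*)^2\big)$, which are strictly positive for every $j\neq 0$ by \eqref{e:JJ}. Restricting to $H_L$ therefore gives an invertible positive diagonal operator $M_L^L(0,0)$ with $\|(M_L^L(0,0))^{-1}\|_{H_L\to H_L}\leq C_N$ for some constant $C_N$ depending on $N$ and $A_*$.

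Next, to bound the perturbation $M_L^L(a,b)-M_L^L(0,0)$ in operator norm on $H_L$, I will use the decomposition
\begin{equation*}
M(a,b)-M(0,0)=\big(\diff S(a,b)-\diff S(0,0)\big)\circ \diff S^*(a,b)+\diff S(0,0)\circ\big(\diff S^*(a,b)-\diff S^*(0,0)\big)
\end{equation*}
together with the tame estimates \eqref{dSa-dS0small}, \eqref{dS0d2}, \eqref{eq:tamedS}, \eqref{eq:tamedS*}. The key simplification is that every $u\in H_L$ is a trigonometric polynomial of degree at most $N$, so all Sobolev norms on $H_L$ are equivalent with constants depending only on $N$. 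This absorbs the various Sobolev indices appearing in the tame estimates and yields
\begin{equation*}
\|(M_L^L(a,b)-M_L^L(0,0))u\|_{L^2}\leq C'_N\,\|(a,b)\|_6\,\|u\|_{L^2},\qquad \forall\, u\in H_L,
\end{equation*}
for some $C'_N>0$ (the appearance of $\|\cdot\|_6$ matches the smallness hypothesis \eqref{eq:alphabsmall}).

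Choosing $\delta(N)>0$ so that $C_N C'_N\delta(N)\leq 1/2$, the factorization
\begin{equation*}
M_L^L(a,b)=M_L^L(0,0)\Big(\mathrm{Id}_{H_L}+\big(M_L^L(0,0)\big)^{-1}\big(M_L^L(a,b)-M_L^L(0,0)\big)\Big)
\end{equation*}
is invertible via a Neumann series with $\|(M_L^L(a,b))^{-1}\|_{H_L\to H_L}\leq 2C_N\lesssim_N 1$. There is no serious obstacle here: the finite-dimensionality of $H_L$ collapses the tame-estimate machinery into a clean $N$-dependent perturbative argument, and the only care needed is checking in the middle step that all the tame constants multiplying $\|(a,b)\|_s$ (for $s\leq 6$) can be absorbed into $N$-dependent constants---which is automatic by the equivalence of Sobolev norms on the finite-dimensional space $H_L$.
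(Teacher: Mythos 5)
Your proof is correct and follows essentially the same route as the paper: both reduce to the invertibility of the finite-dimensional diagonal operator $M_L^L(0,0)$ (via \eqref{dSdS0} and \eqref{e:JJ}) and then perturb, with $\delta(N)$ chosen small enough that the Neumann series converges. The only cosmetic difference is that the paper obtains the required continuity of $M_L^L(a,b)$ in $(a,b)$ directly from the explicit matrix entries \eqref{eq:dSdS*} restricted to $0<|j|,|k|\leq N$, whereas you derive the operator-norm smallness from the tame estimates together with the equivalence of Sobolev norms on $H_L$ — both are valid.
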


\begin{proof}
The operator $ M_L^L (a,b)$ is  the 
finite dimensional  operator 
 obtained restricting in \eqref{eq:dSdS*}  the  indexes 
$ 0 < |j|, |k | \leq N $. The operator $ M_L^L (0,0)$ 
is invertible by 
\eqref{dSdS0} and \eqref{e:JJ}, and
thus, since  \eqref{eq:alphabsmall} holds with  $  \delta := \delta (N)$
 small enough,  
$ M_L^L (a,b) $ is invertible  and the lemma is proved. 
\end{proof}

We also estimate the coupling operators $ M_L^R $ and $ M^L_R  $. 

\begin{lem} \label{lem:coupling}
{\bf (Coupling operators)}
The real
operators $ M_L^R $ and $ M^L_R    $  have the finite rank form 
\be\label{Aalpha0}
M_L^R = 
\sum_{0<|j| \leq N} ( \cdot, g_j  )_{L^2} e_j \, , 
\quad 
M^L_R = 
\sum_{0<|j| \leq N} ( \cdot, e_j  )_{L^2} g_j
\ee 
where each $ g_j $ is a  smooth function in $H_R$
satisfying, for any $ s \geq \tfrac52 $, 
\be\label{estgj}
\sup_{0<|j| \leq N}\| g_j \|_{s} \leq C(s,N) \| (a, b)\|_{s+ \frac32} \, . 
\ee 
In particular  
\be\label{MLRtame}
\begin{aligned}
& \| M_L^R  \gamma \|_s   \leq C(s,N)   \| (a, b)\|_4 \| \gamma  \|_0 \, , \qquad 
\forall s \geq 0 \, , 
\\
& \| M^L_R  \gamma  \|_s   \leq C(s,N)   \| (a, b)\|_{s+\frac32} \| \gamma  \|_0 
\, , \quad 
\forall s \geq \tfrac52  \, . 
\end{aligned} 
\ee
\end{lem}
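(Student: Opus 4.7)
The first step is to identify the functions $g_j$ explicitly. Since $M(a,b) = \diff S(a,b)\circ \diff S^*(a,b)$ is self-adjoint and $M_L^R = \Pi_L M \Pi_R$ has range in the finite-dimensional space $H_L$ spanned by $\{e_j\}_{0<|j|\leq N}$, I would write
\[
M_L^R u \;=\; \sum_{0<|j|\leq N}(M_L^R u,e_j)_{L^2}\, e_j \;=\; \sum_{0<|j|\leq N}(u,g_j)_{L^2}\, e_j, \qquad g_j := (M_L^R)^* e_j = \Pi_R M(a,b)\, e_j \in H_R.
\]
By self-adjointness of $M$ one has $M^L_R = (M_L^R)^*$, and invoking \eqref{RFRadj} immediately yields the companion form $M^L_R = \sum_{0<|j|\leq N}(\cdot,e_j)_{L^2}\, g_j$ with the same $g_j$. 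This gives \eqref{Aalpha0}.

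The crucial observation is that $g_j$ vanishes at $(a,b)=(0,0)$: indeed \eqref{dSdS0} shows $M(0,0)$ is diagonal in the exponential basis, so $\Pi_R M(0,0) e_j = 0$ for $|j|\leq N$. Thus I would write, by the fundamental theorem of calculus,
\[
g_j \;=\; \int_0^1 \Pi_R\, \diff M(\tau a,\tau b)[a,b]\, e_j \, \diff\tau,
\]
and apply the product rule
\[
\diff M(a,b)[a_1,b_1]\gamma \;=\; \diff^2 S(a,b)\big[\diff S^*(a,b)\gamma,(a_1,b_1)\big] + \diff S(a,b)\,\big(\diff^2 S(a,b)[(a_1,b_1)]\big)^*\gamma.
\]
For the first summand, I would use \eqref{eq:tamedS*} to control $\diff S^*(\tau a,\tau b)e_j$ in Sobolev norms by $C(s',N)(1+\|(a,b)\|_{s'})$ (since $\|e_j\|_{s'}\leq N^{s'}$ for $|j|\leq N$), and then invoke \eqref{eq:tamedS^2} with $(\alpha_1,\beta_1)=(a,b)$; the interpolation between the three pieces collapses, under the smallness $\|(a,b)\|_6<\delta$, to a bound of the form $C(s,N)\|(a,b)\|_{s+\frac32}$. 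For the second summand, I would first apply \eqref{d2Sstar} with $(\alpha,\beta)=(a,b)$, $\gamma=e_j$, getting a bound on $(\diff^2 S[(a,b)])^* e_j$ by $C(s,N)\|(a,b)\|_s$, and then propagate through $\diff S(\tau a,\tau b)$ by \eqref{eq:tamedS}, obtaining $C(s,N)\|(a,b)\|_{s+\frac12}$. Summing both contributions yields \eqref{estgj}.

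The derived bounds \eqref{MLRtame} are then algebraic consequences of \eqref{Aalpha0} combined with \eqref{estgj} and the general finite-rank estimate \eqref{Rure}. For the first, Cauchy--Schwarz gives $|(\gamma,g_j)_{L^2}|\leq \|\gamma\|_0\|g_j\|_0$; applying \eqref{estgj} at the minimal admissible index $s=\tfrac52$ yields $\|g_j\|_0\leq \|g_j\|_{\frac52}\leq C(N)\|(a,b)\|_4$, and summing against $\|e_j\|_s\leq N^s$ produces the claimed estimate. For the second, since the output now lives in $H_R$, I would keep the full $s$-dependence of \eqref{estgj} and simply pair with $|(\gamma,e_j)_{L^2}|\leq \|\gamma\|_0$. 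The main technical obstacle is the bookkeeping in deriving \eqref{estgj}: ensuring that, after expanding the product rule and applying \eqref{eq:tamedS^2}, \eqref{d2Sstar}, \eqref{eq:tamedS}, \eqref{eq:tamedS*}, the smallness of $\|(a,b)\|_6$ absorbs all cross-terms and the right-hand side really comes out linear in $\|(a,b)\|_{s+\frac32}$.
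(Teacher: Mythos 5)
Your proposal is correct and follows essentially the same route as the paper: you identify $g_j=\Pi_R M(a,b)e_j$ via self-adjointness of $M$, observe that the coupling vanishes at $(0,0)$ because $M(0,0)$ is diagonal, and control the difference through the tame estimates for $\diff S$, $\diff S^*$ and their second differentials. The only cosmetic difference is that you bound $M(a,b)-M(0,0)$ by integrating $\diff M(\tau a,\tau b)[a,b]$, whereas the paper writes it directly as $(\diff S(a,b)-\diff S(0,0))\diff S(a,b)^*+\diff S(0,0)(\diff S(a,b)-\diff S(0,0))^*$ and invokes the pre-packaged bounds \eqref{dSa-dS0small} and \eqref{dS0d2} (which are themselves proved by exactly your fundamental-theorem-of-calculus argument).
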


\begin{proof}
Since the operator $ M(0,0) $ 
is diagonal (Lemma \ref{dSdS0})
we have that $ \Pi_L  M (0,0)  \Pi_R = 0 $ and,  
setting $ \Delta  
:= M(a,b) - M(0,0) $,  we get  
\[
M_L^R \gamma =  \Pi_L \Delta  \Pi_R \gamma = \sum_{0<|j| \leq N} (  \Delta 
\Pi_R \gamma , e_j)_{L^2} e_j 
 =
\sum_{0<|j| \leq N}(   \gamma , \Pi_R \Delta  e_j)_{L^2} e_j 
\]
being $ \Delta $  self-adjoint as $ M $ in
\eqref{opAab}.
This proves that $ M_L^R $ has the form \eqref{Aalpha0}   with $ 
g_j := \Pi_R \Delta  e_j $. The form of $  M^L_R  = (M_L^R)^*  $ 
follows by  \eqref{RFRadj}. 
Recalling  \eqref{opAab}, we have
\[
\Delta = M(a,b) - M(0,0) = \Big(\diff S (a,b)-\diff S (0,0) \Big) \diff S (a,b)^*+
\diff S (0,0) \Big(  \diff S (a,b)-\diff S (0,0)  \Big)^*  \, .
\]
Then, using \eqref{dSa-dS0small}, \eqref{eq:tamedS*}, Lemma \ref{lem:dStrivial} 
and \eqref{dS0d2},  
 we deduce that each $ g_j  =  \Pi_R \Delta  e_j $ 
 satisfies \eqref{estgj}. 
 
Finally \eqref{MLRtame} follows by \eqref{Rure} and \eqref{Aalpha0}-\eqref{estgj}.
\end{proof}

In view of Lemmata \ref{invALL} and  \ref{lem:resol}, 
in order to prove the invertibility of $ M $ and the tame estimates for 
$ M^{-1}$, it is sufficient to prove that the real operator 
$ \tilde M_R^R =  M_R^R -  M_R^L (M^L_L)^{-1} M^R_L  $
in \eqref{ARRsmall} is invertible and its inverse satisfies tame estimates. 
The next proposition  is the key result.

\begin{prop}\label{lem}\label{invpropAtilde}
{\bf (Inverse of $\tilde M_R^R $)}
There is  $ N \in \N $ such that, if \eqref{eq:alphabsmall} 
holds with $ \delta $ small enough, then
the operator $ \tilde M_R^R $ in \eqref{ARRsmall} is invertible and, for any  $ s \geq 2 $, 
  \[
\| (\tilde M_R^R)^{-1}  \gamma \|_s 
 \leq C (s) \| \gamma \|_{s+1} +  C(s) \| (a, b) \|_{s+4} \|  \gamma  \|_{3}    \, . 
\]
\end{prop}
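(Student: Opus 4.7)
The plan is to execute the three-step scheme outlined in the introduction, using the explicit Fourier representation \eqref{eq:dSdS*} of $M(a,b)$ together with the Bessel decomposition $J_1(\theta) = \mathrm{Re}(e^{i\theta} r(\theta))$ of Lemma \ref{lem:J1} to expose the matrix elements of $M(a,b)$ as finite sums of oscillatory integrals. Then I would reduce the inversion of $\tilde M_R^R$ to that of a perturbation of an invertible diagonal operator via a Neumann series, which is the content of Lemma \ref{lem:invertame}.

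\textbf{Diagonal/off-diagonal splitting.} On $H_R$, define $\mathcal D$ as the diagonal Fourier multiplier whose $(j,j)$-entry is $4\pi^2 \bigl(J_1(jA_*)^2 + J_1'(jA_*)^2\bigr)$; by \eqref{e:JJ} it is invertible, and $\mathcal D^{-1}$ is an order $+1$ Fourier multiplier so that $\|\mathcal D^{-1}\gamma\|_s \lesssim \|\gamma\|_{s+1}$. Writing
\[
\tilde M_R^R = \mathcal D + \mathcal N, \qquad \mathcal R := \mathcal D^{-1}\mathcal N, \qquad \tilde M_R^R = \mathcal D(\mathrm{Id} + \mathcal R),
\]
where $\mathcal N$ collects (i) $M_R^R(a,b) - M_R^R(0,0)$, (ii) the off-diagonal part of $M_R^R(0,0)$ (which actually vanishes by \eqref{dSdS0}), and (iii) the coupling correction $-M_R^L(M_L^L)^{-1}M_L^R$ from the resolvent identity \eqref{ARRsmall}, reduces the problem to proving that $\mathcal R$ satisfies the asymmetric tame estimate with smallness \eqref{eq:Rgamma}.

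\textbf{Further splitting $\mathcal R = \mathcal R_1 + \mathcal R_2 + \mathcal R_3$.} Expanding each $J_1$ and $J_1'$ in \eqref{eq:dSdS*} via Lemma \ref{lem:J1} turns the matrix elements into oscillatory integrals with phases $\pm kA(x) \pm jA(x) + (j-k)B(x)$ and symbols controlled by Lemma \ref{lem:Sobosimbol}. The piece $\mathcal R_1$ collects those sign combinations for which the $A$-phase cancels, leaving the phase $(j-k)B(x)$: these matrix elements have off-diagonal decay (via Lemma \ref{lem:PS} on the non-stationary factor $e^{i(j-k)(x+b(x))}$ and the Bessel decay $|r|\sim |j|^{-1/2}$), so $\mathcal R_1$ is pseudo-differential and Lemma \ref{lem:tamepseudodiff} gives the asymmetric bound automatically, with smallness supplied by Lemma \ref{lem:piccolosimbolo} once $N$ is large (the order $+1$ gain of $\mathcal D^{-1}$ is beaten by the extra decay at $|j|>N$). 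The piece $\mathcal R_2$ is the finite-rank coupling term, for which Lemma \ref{lem:coupling}, the bound \eqref{MLRtame}, and Lemma \ref{invALL} already yield smallness once $\|(a,b)\|_6 < \delta(N)$. The piece $\mathcal R_3$ collects the genuinely Fourier-integral contributions whose phase retains the $A(x)$-dependence and is therefore non-stationary on $H_R\times H_R$: by integrating by parts, this operator is strictly more regularizing than $\mathcal R_1$.

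\textbf{The main obstacle.} The hard step, as the introduction flags, is extracting the asymmetric tame estimate for $\mathcal R_3$: composition and FIO operators only satisfy symmetric tame estimates (Remark \ref{r:nona}) because of the exponential-in-$s$ blow-up in $\|\mathcal P u\|_s$. The idea is to exploit the additional regularization of $\mathcal R_3$ (say, of order $-\mu$ for some $\mu > 0$ coming from repeated integration by parts in the non-stationary phase) and the asymmetric interpolation inequality \eqref{asintpo}: for any $\epsilon$, one can trade $\mu$ derivatives on $\gamma$ for $\mu$ derivatives on the symbol (and on $a,b$) at the price of a factor $\epsilon$, and choosing $\epsilon = 1/C(s)$ converts the symmetric Lemma \ref{lem:Eusmall} bound into the desired asymmetric one, with smallness inherited from the $N^{-\mu}$ factor produced by the non-stationary-phase estimate on $|j-k|, |j|, |k| > N$.

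\textbf{Conclusion.} With $\mathcal R$ decomposed and each piece satisfying \eqref{eq:Rgamma} with $C(s_1,N)$ arbitrarily small, Lemma \ref{lem:invertame} inverts $\mathrm{Id} + \mathcal R$ on $H_R$ with tame estimates \eqref{1+Rstame}. Writing $(\tilde M_R^R)^{-1} = (\mathrm{Id} + \mathcal R)^{-1}\mathcal D^{-1}$ and composing \eqref{1+Rstame} with $\|\mathcal D^{-1}\gamma\|_s \lesssim \|\gamma\|_{s+1}$ yields the stated estimate; the loss $s+4$ on $(a,b)$ reflects the combined derivative cost of the tame constants produced by Lemmata \ref{lem:tamedS} and \ref{dSadj} for $\diff S$ and $\diff S^*$ (each losing $\tfrac12$ in Sobolev exponent) propagated through the symbol estimates of $\mathcal R_1$ and $\mathcal R_3$.
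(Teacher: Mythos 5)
Your high-level architecture (resolvent identity, diagonal plus remainder, Neumann series via Lemma \ref{lem:invertame}, a three-piece splitting of $\mathcal R$) matches the paper, but the two non-trivial pieces $\mathcal R_1$ and $\mathcal R_3$ are defined by the wrong criterion, and the smallness mechanisms you assign to them do not work. Expanding $J_1(kA)J_1(jA)+J_1'(kA)J_1'(jA)$ by sign combinations produces phases $(\epsilon_1 k+\epsilon_2 j)A(x)+(j-k)B(x)$; there is \emph{no} combination in which the $A$-phase cancels leaving $(j-k)B(x)$, so your $\mathcal R_1$ as described is empty, and under the charitable reading (phases $(j-k)(B\pm A)$) it is the \emph{leading} part of the operator, of exactly order $-1$ like $\mathcal D$. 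For that piece Lemma \ref{lem:piccolosimbolo} buys nothing: there is no order gap to trade for powers of $N$. Its smallness must instead come from $\|(a,b)\|$ after subtracting the flat value, which requires writing the off-diagonal entries as $\int_0^1\frac{d}{d\tau}(\cdots)\,\diff\tau$ along the homotopy $(\tau a,\tau b)$ and only then applying non-stationary phase (the paper's Lemma \ref{lem:N1}); applying Lemma \ref{lem:PS} directly to $T(a,b)$ gives off-diagonal decay but no smallness.

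The more serious gap is $\mathcal R_3$. The terms whose phase "retains the $A$-dependence" in your sense are those with phase $(j+k)A(x)+(j-k)B(x)$, and these are \emph{not} amenable to integration by parts near the diagonal: for $j=k$ the phase derivative is $2ja'(x)$, which vanishes at critical points of $a$ and identically when $a\equiv 0$, in which case the diagonal entry is exactly $2\pi|r(jA_*)|^2e^{2ijA_*}\sim|j|^{-1}$ --- the same order as $\mathcal D$, hence $O(1)$ and not small after applying $\mathcal D^{-1}$, which destroys the Neumann series. What saves the day in the paper is the exact algebraic cancellation $\mathrm{Re}\,z_1\,\mathrm{Re}\,z_2+\mathrm{Im}\,z_1\,\mathrm{Im}\,z_2=\mathrm{Re}(z_1\bar z_2)$ (Eq.~\eqref{Tabjk2N}), which eliminates all $(j+k)A$-phases from the leading part; your decomposition never invokes it. Relatedly, you never isolate the genuinely lower-order remainder of $J_1'$, namely $J_1'=-\mathrm{Im}(H_1)+K$ with $K(\theta)=\mathrm{Re}(r'(\theta)e^{i\theta})$ of order $-3/2$ (Eq.~\eqref{defK}); it is the operator $E_1$ built from $K$, restricted to $|k|>N$ and reinterpreted as order $-1$ via Lemma \ref{lem:piccolosimbolo}, that produces the $N^{-1/2}$ smallness, combined with the interpolation \eqref{asintpo} to make its symmetric FIO tame estimate asymmetric. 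Without the cancellation identity and the $E_0+E_1$ splitting, neither the smallness of your $\mathcal R_1$ nor the regularization of your $\mathcal R_3$ can be established as claimed.
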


\noindent 

Before proving Proposition \ref{invpropAtilde} we show how 
it implies Proposition \ref{prop:key}. 

\noindent 
{\sc Proof of Proposition \ref{prop:key}.}
 It is a direct consequence of 
Proposition \ref{invpropAtilde}, Lemmata \ref{lem:resol},  \ref{invALL}, and 
\ref{lem:coupling}. 
 
 \medskip
 
\paragraph{\bf Proof of Proposition \ref{invpropAtilde}.}
The derivative of  the Bessel function 
$ J_1(\theta) = \mathrm{Re} (H_1(\theta)) $, 
with $H_1(\theta) = r(\theta)e^{i\theta} $ 
as  in Lemma \ref{lem:J1}, is   
\be\label{defK}
J_1'(\theta) = - \mathrm{Im} (H_1(\theta)) + K(\theta) \qquad \text{with} \qquad K(\theta) := \mathrm{Re} (r'(\theta) e^{i\theta}) \, . 
\ee
The fact that  $ r (\theta ) \sim \langle \theta \rangle^{- \frac12 } $  
whereas
$ r' (\theta ) 	\sim  \langle \theta \rangle^{- \frac32 } $ 
 for $ |\theta| \to + \infty $ (Lemma \ref{lem:J1}), suggests 
to decompose the linear operator $ \diff S(a,b) $ in \eqref{diffSab} as
\be\label{SabE0E1}
\diff S(a,b) = E_0(a,b) + E_1(a,b)
\ee
where
\begin{align}
E_0(a,b)[\alpha,\beta] &:=\sum_{k\neq 0} \Big ( -    \int_\T \Big 
[ \mathrm{Im} (H_1( kA(x))) \alpha (x) + i \mathrm{Re} ( H_1 (kA(x))) \beta (x) \Big ] e^{- ik B(x)} \, \diff x\Big ) e_k  \label{formE0}\\
E_1(a,b)[\alpha,\beta] &  
:=  \sum_{k\neq 0} \Big ( \int_\T K( kA(x)) \alpha(x) e^{- ik B(x)}\, \diff x\Big ) e_k \, . \label{formE1}
\end{align}
Note that $E_1(a,b)[\alpha,\beta]  $ is independent of $ \beta $.

The above decomposition  \eqref{SabE0E1}  induces 
the following decomposition of $ \tilde M_R^R $ in \eqref{ARRsmall}. 

\begin{lem}
{\bf (Decomposition lemma)}
The operator 
$ \tilde M_R^R $ in \eqref{ARRsmall} may be written as 
\be\label{ARRNlem}
 \tilde M_R^R  = 
 {\mathcal D} + {\mathcal N}_1 + {\mathcal N}_2 + {\mathcal N}_3 
\ee
where $ {\mathcal D} $ is the diagonal operator
\be\label{diagD} 
{\mathcal D}  := {\mathcal D}(a,b) := {\rm diag}(T^j_j (a,b))_{|j|> N}   
\ee
and 
\begin{align} 
{\mathcal N}_1 &  := {\mathcal N}_1 (a,b) := \big( T^j_k (a,b) \big)_{|j|,|k|> N, j \neq k} 
\label{defN1} \\
{\mathcal N}_2 &  := {\mathcal N}_2 (a,b) := -  M_R^L (M^L_L)^{-1} M^R_L \\
{\mathcal N}_3 & := {\mathcal N}_3 (a,b) := \Pi_R E_0\circ(\Pi_R E_1)^* + \Pi_R E_1\circ (\Pi_R E_0)^* + \Pi_R E_1\circ (\Pi_R E_1)^*  \, , \label{defN3}
\end{align}
with  matrix entries (cfr.~Lemma \ref{matrixentries})  
\begin{align}
			 T^j_k (a,b) 
			& =  2 \pi \int_\T \Big(
			\mathrm{Re} (H_1 ( kA(x))) \mathrm{Re} (H_1(jA(x))) + 
			\mathrm{Im} (H_1( kA(x))) \mathrm{Im} (H_1( jA(x)))   \Big) e^{i (j-k)B(x)}\, \diff x 
			\label{Tabjk1N} \\
			& =  \pi \int_\T r( kA(x)) \overline{r( jA(x))} e^{ i(j-k)(B(x)-A(x))}\, \diff x + \pi \int_\T \overline{r(kA(x))} r(jA(x)) e^{ i(j-k)(B(x)+A(x))}\, \diff x \, .  \label{Tabjk2N}   
			\end{align}
\end{lem}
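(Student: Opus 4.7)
I would treat this as a purely algebraic bookkeeping lemma, with no genuine analytic content. The two ingredients are: (i) the decomposition of $J_1'$ given in \eqref{defK}, which together with $J_1(\theta) = \mathrm{Re}(H_1(\theta))$ induces the splitting $\diff S(a,b) = E_0(a,b) + E_1(a,b)$ of \eqref{SabE0E1}; (ii) the resolvent identity already established in \eqref{ARRsmall}, which isolates the operator $\mathcal N_2 := -M_R^L (M_L^L)^{-1} M_L^R$. First I would verify $\diff S = E_0 + E_1$ by direct substitution: plugging $J_1' = -\mathrm{Im}(H_1) + K$ and $J_1 = \mathrm{Re}(H_1)$ into \eqref{diffSab} splits the $\alpha$-term into an $H_1$-piece (which joins the $\beta$-term to form $E_0$ as in \eqref{formE0}) and a $K$-piece (which is $E_1$ as in \eqref{formE1} and depends only on $\alpha$, precisely as stated).

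\textbf{Main computation.} Next, by bilinearity of composition,
\[
\diff S \circ \diff S^* \;=\; E_0 E_0^* + E_0 E_1^* + E_1 E_0^* + E_1 E_1^*,
\]
and applying $\Pi_R$ on both sides I read off $M_R^R = \Pi_R E_0 E_0^* \Pi_R + \mathcal N_3$ with $\mathcal N_3$ exactly as in \eqref{defN3}. Combined with $\tilde M_R^R = M_R^R + \mathcal N_2$ coming from \eqref{ARRsmall}, the lemma reduces to showing $\Pi_R E_0 E_0^* \Pi_R = \mathcal D + \mathcal N_1$, i.e.\ that the matrix entries $(E_0 E_0^* e_j, e_k)_{L^2}$ for $|j|,|k|>N$ are precisely $T^j_k$ of \eqref{Tabjk1N}. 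Taking the $L^2$-adjoint of \eqref{formE0} yields
\[
E_0^* e_j \;=\; \bigl(-2\pi\,\mathrm{Im}(H_1(jA(x)))\, e^{ij B(x)},\ \ 2\pi i\,\mathrm{Re}(H_1(jA(x)))\, e^{ij B(x)}\bigr),
\]
after which $(E_0 E_0^* e_j, e_k)_{L^2} = (E_0^* e_j, E_0^* e_k)_{L^2\times L^2}$ expands, through $(u,v)_{L^2} = \tfrac{1}{2\pi}\int u\bar v\, \diff x$, to exactly the right-hand side of \eqref{Tabjk1N} (the factor $(2\pi i)\overline{(2\pi i)} = 4\pi^2$ kills the $i$ in the second component). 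Separating the $j=k$ contribution from the $j\neq k$ one produces $\mathcal D + \mathcal N_1$ as defined in \eqref{diagD}-\eqref{defN1}.

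\textbf{Equivalence of the two forms of $T^j_k$, and main obstacle.} For the passage from \eqref{Tabjk1N} to \eqref{Tabjk2N} I would use the identity $\mathrm{Re}(z)\mathrm{Re}(w) + \mathrm{Im}(z)\mathrm{Im}(w) = \mathrm{Re}(z\bar w)$ with $z = H_1(kA) = r(kA)e^{ikA}$ and $w = H_1(jA) = r(jA)e^{ijA}$: the integrand in \eqref{Tabjk1N} becomes $\mathrm{Re}\bigl(r(kA)\overline{r(jA)}\, e^{i(k-j)A}\bigr) = \tfrac12 \bigl(r(kA)\overline{r(jA)}\, e^{i(k-j)A} + \overline{r(kA)}r(jA)\, e^{-i(k-j)A}\bigr)$, and multiplying by $2\pi\, e^{i(j-k)B(x)}$ and integrating yields \eqref{Tabjk2N}. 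The only obstacle is notational: carefully tracking signs, the factor $i$ in the second slot of $E_0$, and complex conjugates when computing $E_0^*$. A useful sanity check is that $E_0^* + E_1^*$ must reproduce \eqref{eq:dS*}; this is immediate because $-\mathrm{Im}(H_1) + K = J_1'$ in the first component and $\mathrm{Re}(H_1) = J_1$ in the second.
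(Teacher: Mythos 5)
Your proposal is correct and follows essentially the same route as the paper: split $\diff S=E_0+E_1$, expand $M_R^R=\Pi_R\diff S\circ(\Pi_R\diff S)^*$ bilinearly to isolate $\mathcal N_3$, identify $\Pi_R E_0\circ(\Pi_R E_0)^*$ with $\mathcal D+\mathcal N_1$ via the matrix entries $(E_0^*e_j,E_0^*e_k)_{L^2\times L^2}$, and pass to \eqref{Tabjk2N} through $\mathrm{Re}(z)\mathrm{Re}(w)+\mathrm{Im}(z)\mathrm{Im}(w)=\mathrm{Re}(z\bar w)$. All the sign and factor bookkeeping in your computation checks out against \eqref{Tabjk1N}--\eqref{Tabjk2N}.
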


\begin{proof}
According to the decomposition \eqref{SabE0E1} the linear operator $ M_R^R  $
in  \eqref{Adeco} is equal to 
\begin{align}
M_R^R  & = 
\Pi_R \diff S \circ \diff S^* \Pi_R \notag  = \Pi_R \diff S \circ (\Pi_R \diff S)^* \notag  \\
& = 
\underbrace{\Pi_R E_0 \circ (\Pi_R E_0)^*}_{= \, T(a,b)} \, + \, 
\underbrace{\Pi_R E_0 \circ (\Pi_R E_1)^* + 
\Pi_R E_1 \circ (\Pi_R E_0)^* + \Pi_R E_1\circ (\Pi_R E_1)^* }_{= \, {\mathcal N}_3(a,b)}
\, . \label{dSdS*}  
\end{align}
Then we write  in matrix form
\be\label{TplusDN}
T(a,b) [\gamma ] = \Pi_R E_0\circ (\Pi_R E_0)^* [\gamma] 
=  \sum_{|k|, |j| > N} 
T^j_k(a,b)  \widehat{\gamma}(j)  e_k  \equiv 
{\mathcal D} + {\mathcal N}_1  
\ee
where $ {\mathcal D} $ is the diagonal operator in \eqref{diagD} 
and  $ {\mathcal N}_1  $ is the off-diagonal operator defined in \eqref{defN1}.
Thus \eqref{ARRNlem}  is a direct consequence of   \eqref{ARRsmall}, \eqref{dSdS*} and \eqref{TplusDN}.  
Let us now compute the matrix entries $ T^j_k (a,b) $. 
By \eqref{formE0}, and since (cfr.~Lemma \ref{lem:adjo}) 
\[
E_0 (a,b)^* [\gamma] = - 2 \pi  
\Big ( \sum_{j \neq 0 } 
\mathrm{Im} (H_1( jA(x)) e^{ij B(x)} \widehat \gamma(j), i \sum_{j \neq 0 } 
\mathrm{Re}( H_1(jA(x))) e^{ i j B(x)} \widehat\gamma(j)\Big ) \, , 
\]
we deduce \eqref{Tabjk1N}. 
Finally  \eqref{Tabjk2N} follows 
recalling that $H_1(\theta) = e^{i\theta} r(\theta)$ and using 
the identity $\mathrm{Im} (z_1) \mathrm{Im}(z_2) + \mathrm{Re} (z_1) \mathrm{Re}(z_2) = \mathrm{Re} (z_1\bar z_2)=
\frac 12 (z_1\bar z_2 + \bar z_1z_2)$.
\end{proof}

We first prove the invertibility of the diagonal operator $ {\mathcal D} $ defined in \eqref{diagD}. 

\begin{lem}\label{lemD}
{\bf (Invertibility  of $ {\mathcal D}  $)} 
 Assume  \eqref{eq:alphabsmall}. 
 There is $ N_0 \in \N $ such that, for any $ N \geq N_0 $,
 the diagonal operator $ {\mathcal D}  $ in \eqref{diagD} is invertible and 
its matrix entries satisfy 
\be\label{Tjjd}
|T^j_j (a,b)| \sim {|j|}^{-1} \, . 
\ee
As a consequence there is $ C> 0 $ such that
\be\label{D-1s}
\| {\mathcal D} u \|_{s+1} \leq C \| u \|_s \, , \quad
\| {\mathcal D}^{-1} u \|_{s} \leq C \| u \|_{s+1} \, , \quad \forall  s \in \R \, .
\ee
\end{lem}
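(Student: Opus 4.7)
\textbf{Proof proposal for Lemma \ref{lemD}.} The plan is to first compute the diagonal entries $T^j_j(a,b)$ explicitly and then read off the $|j|^{-1}$ asymptotics directly from the Bessel-function bounds in Lemma \ref{lem:J1}. Setting $k=j$ in \eqref{Tabjk2N} the two integrals collapse (the exponential factors become $1$) and I get the clean expression
\[
T^j_j(a,b)=2\pi \int_\T \big| r(jA(x))\big|^2 \, \diff x \, .
\]
In particular $T^j_j$ is real and positive, so $\mathcal D$ is automatically invertible as soon as each entry is nonzero; the whole question becomes quantitative.

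By Sobolev embedding and the smallness assumption \eqref{eq:alphabsmall} we have $\|a\|_{C^0}\leq A_*/2$, hence $A_*/2\le A(x)\le 3A_*/2$ uniformly in $x\in\T$. Choosing the threshold $N_0\in\N$ so that $N_0 A_*/2 > \theta_*$ (with $\theta_*$ as in \eqref{firstHF}), for every $|j|>N\geq N_0$ and every $x\in\T$ the argument $jA(x)$ lies in the region where both the upper bound \eqref{e:r} and the lower bound \eqref{firstHF} apply. Combining them gives
\[
\frac{c_*^2}{|j|A(x)}\leq \big|r(jA(x))\big|^2 \leq \frac{C_0^2}{|j|A(x)} \, ,
\]
and integrating in $x$, using once more that $A(x)$ is pinched between $A_*/2$ and $3A_*/2$, yields
\[
\frac{c}{|j|}\leq T^j_j(a,b) \leq \frac{C}{|j|}
\]
for positive constants $c,C$ depending only on $A_*$. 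This is \eqref{Tjjd}.

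Finally, \eqref{D-1s} is immediate from Plancherel. Since $\mathcal D$ is diagonal on the basis $\{e_j\}_{|j|>N}$, for any $u=\sum_{|j|>N}\widehat u(j)e_j$,
\[
\|\mathcal D u\|_{s+1}^2=\sum_{|j|>N}\langle j\rangle^{2(s+1)}|T^j_j|^2|\widehat u(j)|^2 \leq C^2 \sum_{|j|>N}\langle j\rangle^{2s}|\widehat u(j)|^2 = C^2\|u\|_s^2 \, ,
\]
where I used $\langle j\rangle^2|T^j_j|^2\leq C^2$, which follows from \eqref{Tjjd} since $\langle j\rangle=|j|$ for $|j|\geq 1$. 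The same computation with $|T^j_j|^{-1}\leq c^{-1}|j|$ gives $\|\mathcal D^{-1}u\|_s\leq c^{-1}\|u\|_{s+1}$. No hard step here: once the Bessel decay in Lemma \ref{lem:J1} is in place, the only genuine input is the $C^0$-control of $A(x)$ away from zero provided by \eqref{eq:alphabsmall}.
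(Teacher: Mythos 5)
Your proof is correct and follows essentially the same route as the paper: compute $T^j_j=2\pi\int_\T |r(jA(x))|^2\,\diff x$ (the paper reads this off \eqref{Tabjk1N} via $|H_1|=|r|$, you get it from \eqref{Tabjk2N}, which is equivalent), then combine the pinching $A_*/2\le A(x)\le 3A_*/2$ with the upper bound \eqref{e:r} and the lower bound \eqref{firstHF} for $|j|>N\ge N_0$, and conclude \eqref{D-1s} by Plancherel. No gaps.
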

\begin{proof}
By \eqref{diagD}, \eqref{Tabjk1N} 
we have 
\[
{\mathcal D} = {\rm diag}(T^j_j (a,b))_{|j|> N} = 2 \pi\, 
\text{diag}_{|j| > N} \Big( \int_\T  |H_1(jA(x))|^2  \, \diff x \Big) = 
2 \pi \, 
\text{diag}_{|j| > N} \Big( \int_\T  |r(jA(x))|^2  \, \diff x \Big) 	\, ,
\]
since $ |H_1 (\theta)| = | r(\theta)| $ where $ r(\theta )$ is the function 
defined in \eqref{e:Jr}.
By \eqref{eq:alphabsmall} it results $ A (x) \geq A_* / 2 > 0 $ and 
therefore, by \eqref{e:r}-\eqref{firstHF},  for any $ |j| > N $ large enough,  
 the estimate \eqref{Tjjd} follows.
\end{proof}

In view of  Lemma \ref{lemD}  we rewrite the linear operator  $ \tilde M_R^R $
in \eqref{ARRNlem} as 
\be\label{TabDabNab}
\tilde M_R^R = 
{\mathcal D} \big( \text{Id} + {\mathcal R}_1 + {\mathcal R}_2 + {\mathcal R}_3
\big)  
\ee
where
$  {\mathcal R}_i :=  {\mathcal D}^{-1}  {\mathcal N}_i $ for any $ i = 1,2,3 $. 

The invertibility of $ \tilde M_R^R $ and the tame estimates of its inverse 
will follow by Lemma \ref{lem:invertame}, 
once we show that each $  {\mathcal R}_i $, $ i = 1,2,3 $, 
is a $ 0$-tame operator satisfying the smallness 
assumption of Lemma \ref{lem:invertame}. 
\\[1mm]
{\bf Step 1: Estimates of ${\mathcal R}_1 $.} By 
the non-stationary-phase principle 
we  deduce the following lemma. 

\begin{lem}\label{lem:N1}
{\bf (Off-diagonal decay of ${\mathcal N}_1 $)}
Assume \eqref{eq:alphabsmall}. 
For any $ s \in \N $,  
 for any $ j \neq k $,  
\be\label{Njk}
| [{\mathcal N}_1]^j_k| \leq C(s)  
\frac{\| (a, b) \|_{s+1}}{|k |^{\frac12} |j|^{\frac12} |j-k|^{s-1}} 
\, . 
\ee
\end{lem}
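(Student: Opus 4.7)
From \eqref{Tabjk2N}, the matrix entry $T^j_k(a,b)$ splits into the two oscillatory integrals
\[
I_\pm(a,b) := \pi\, e^{\mp i(j-k)A_*}\!\int_\T r(kA(x))\,\overline{r(jA(x))}\, e^{i(j-k)(x+p_\pm(x))}\,dx,\qquad p_\pm(x):=b(x)\pm a(x),
\]
(the second one with $r,\bar r$ exchanged), whose phases, after factoring the constants $\mp (j-k)A_*$, have the form $(j-k)(x+p_\pm(x))$. Under \eqref{eq:alphabsmall} and Sobolev embedding, $p_\pm$ satisfies the smallness hypotheses of Lemma~\ref{lem:PS}. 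A naive application of that lemma together with Lemma~\ref{lem:Sobosimbol} (applied to $r$, of order $-1/2$) and \eqref{eq:tameproduct} would yield a bound of the form $|j-k|^{-s}\,|k|^{-1/2}|j|^{-1/2}(1+\|(a,b)\|_{s+1})$, which is however not sufficient: the target \eqref{Njk} requires a \emph{linear} dependence on $\|(a,b)\|_{s+1}$ with no additive constant.

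The crucial observation allowing me to remove the ``$1$'' is that $T^j_k(0,0)=0$ whenever $j\neq k$. Indeed, this follows from the diagonality of $M(0,0)$ displayed in \eqref{dSdS0}, and equivalently by direct inspection: at $(a,b)=(0,0)$ the amplitude $r(kA_*)\overline{r(jA_*)}$ is a constant in $x$, the phase of $I_\pm$ reduces to $(j-k)(x\mp A_*)$, and $\int_\T e^{i(j-k)x}\,dx=0$. I therefore write
\[
T^j_k(a,b)=\int_0^1 \frac{d}{d\tau}\,T^j_k(\tau a,\tau b)\,d\tau
\]
and differentiate under the integral. Setting $A_\tau:=A_*+\tau a$, the $\tau$-derivative of $I_\pm(\tau a,\tau b)$ is again an oscillatory integral with phase $(j-k)(x+\tau p_\pm(x))$, but with a new amplitude $w_\tau^\pm(x)$ which is \emph{linear} in $(a,b)$ and has three contributions: $k\,a(x)\,r'(kA_\tau)\overline{r(jA_\tau)}$ and $j\,a(x)\,r(kA_\tau)\overline{r'(jA_\tau)}$ from differentiating the Bessel amplitude, plus a decisive term $i(j-k)\,p_\pm(x)\,r(kA_\tau)\overline{r(jA_\tau)}$ coming from differentiating the phase. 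In particular, no factor of $(j-k)$ arises from the amplitude derivatives.

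Applying Lemma~\ref{lem:PS} with $n=j-k$, amplitude $w_\tau^\pm$ and perturbation $\tau p_\pm$, I obtain
\[
\Bigl|\tfrac{d}{d\tau}I_\pm(\tau a,\tau b)\Bigr|\leq \frac{C(s)}{|j-k|^s}\bigl(\|w_\tau^\pm\|_s+\|p_\pm\|_{s+1}\,\|w_\tau^\pm\|_1\bigr),\qquad s\in \N.
\]
The Sobolev norms of $w_\tau^\pm$ are controlled by \eqref{eq:tameproduct} combined with Lemma~\ref{lem:Sobosimbol}, using that $r$ and $r'$ have orders $-1/2$ and $-3/2$ by Lemma~\ref{lem:J1}; the factors $|k|$, $|j|$ in the first two terms of $w_\tau^\pm$ are absorbed by the extra decay of $r'$, while the factor $|j-k|$ in the third term dominates pointwise. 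Using $\|(a,b)\|_{s_0}\leq \|(a,b)\|_6<\delta$ to absorb tame corrections, each summand of $w_\tau^\pm$ satisfies $\|w_\tau^\pm\|_s\lesssim_s |j-k|\,|k|^{-1/2}|j|^{-1/2}\,\|(a,b)\|_{s+1}$. Integrating over $\tau\in[0,1]$, the extra factor $|j-k|$ degrades the decay from $|j-k|^{-s}$ to $|j-k|^{-(s-1)}$, which is exactly \eqref{Njk}. The main obstacle will be the careful bookkeeping in the tame estimate for $w_\tau^\pm$, in particular verifying that the weights $|k|$ from $k\,r'(kA_\tau)$ and $|j|$ from $j\,r'(jA_\tau)$ are precisely compensated by the additional $|k|^{-1}$, $|j|^{-1}$ decay of $r'$ versus $r$, so that the prefactor $|k|^{-1/2}|j|^{-1/2}$ is preserved.
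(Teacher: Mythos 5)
Your proposal is correct and follows essentially the same route as the paper: subtracting the vanishing value $T^j_k(0,0)$ (for $j\neq k$), writing $T^j_k(a,b)=\int_0^1\frac{d}{d\tau}T^j_k(\tau a,\tau b)\,d\tau$ to obtain an amplitude linear in $(a,b)$, applying the non-stationary-phase Lemma~\ref{lem:PS}, and compensating the factors $|k|$, $|j|$ by the extra decay of $r'$ while letting the $|j-k|$ factor from the phase derivative account for the loss from $|j-k|^{-s}$ to $|j-k|^{-(s-1)}$. This matches the paper's argument step for step, including the bound $\|\sigma_\tau\|_s\lesssim_s |j-k|\,|k|^{-1/2}|j|^{-1/2}\|(a,b)\|_s$.
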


\begin{proof}
Let us 
prove that  the second term of 
$ [{\mathcal N}_1]^j_k = T_k^j (a,b)  $ in \eqref{Tabjk2N},  namely 
\[
\Sigma_k^j   := \pi \int_\T 
\overline{r( kA(x))} r(jA(x))
 e^{ i(j-k)(B(x)+ A(x))}\, \diff x   - \pi \int_\T 
\overline{r( kA_*)} r( jA_*)
 e^{ i(j-k)(x+ A_*)}\, \diff x \, , 
\]
(the last integral is zero since $ j \neq k $) satisfies a bound as \eqref{Njk}.
Recalling that $ A(x)  = A_* + a(x) $, 
$ B(x) = x + b(x) $ (cfr.~\eqref{AxBx}), and denoting
\[
A_\tau (x) := A_* + \tau a(x) \, , \quad B_\tau (x) := x + \tau b(x) \, , 
\quad \forall \tau \in [0,1] \, , 
\]  
we write 
\[
\begin{aligned}
\Sigma_k^j & = \int_0^1 \frac{d}{d\tau} \int_{\T} 
\overline{r(kA_\tau (x))} r( jA_\tau (x))
 e^{ i(j-k)(B_\tau (x)+ A_\tau(x))}\, \diff x  \diff \tau\\
 & = \int_0^1  \int_{\T} \sigma_\tau (x,j,k)
 e^{ i(j-k)(x + \tau b(x)  + \tau a(x) )}\, \diff x  \diff \tau
 \end{aligned}
\]
where
\begin{equation}\label{sigmakj}
\begin{aligned}
\sigma_\tau (x,j,k) 
& := \Big[  \overline{r' (k A_\tau (x))} k a(x)
r ( j A_\tau (x)) + 
\overline{r (k A_\tau (x))} r' (j A_\tau (x)) j a(x) \\
& \quad + \overline{r( kA_\tau (x))} r( jA_\tau (x)) 
 i(j-k)(b (x)+ a(x)) \Big]  e^{ i(j-k) A_*  } \, . 
\end{aligned}
\end{equation}
We apply the non-stationary phase Lemma \ref{lem:PS}
with
\[
u(x) = \sigma_\tau (x,j,k)  \, , \quad p(x) = \tau b (x) + \tau a(x)  \, , \quad  n =  j - k \, .
\]
Note that $ \| p \|_2 \leq \| b \|_2  + \| a \|_2 =: K $, $ \| p' \|_{C^0} \leq 
\| b \|_{C^0} + \| a \|_{C^0} < 1 / 2 $
by \eqref{eq:alphabsmall}, and for any $s\in \R$
\be\label{ps+1}
\| p \|_{s+1} \leq \| (a,b) \|_{s+1}  \, . 
\ee
By \eqref{sigmakj}, \eqref{eq:tameproduct}, Lemma \ref{lem:Sobosimbol}, \eqref{e:r},
\eqref{eq:alphabsmall}, and since $ 1 \leq |j-k|$, we deduce that,
for any $ s \geq s_0 > \tfrac12 $, 
\be\label{equs}
\| u \|_s \leq \sup_{\tau \in [0,1]} \| \sigma_\tau ( \cdot ,j,k)\|_s 
\lesssim_s \frac{|j-k|}{\sqrt{|j|} \sqrt{|k|}} \|(a,b) \|_{s} \, . 
\ee
Then, by \eqref{fasesta}, \eqref{ps+1} and \eqref{equs} 
we conclude that, for any $ j \neq k $, $ s \in \N $, 
\be\label{decays-1}
| \Sigma^j_k|  \lesssim_s 
\frac{1}{|j-k|^s} 
\Big( \| u \|_{s} + \| p \|_{s+1} \| u \|_1  \Big) 
\lesssim_s \frac{1}{\sqrt{|k|}\sqrt{|j|}} \frac{1}{|j-k|^{s-1}} 
\| (a, b) \|_{s+1}  
\ee
The first term in \eqref{Tabjk2N}  is  estimated  similarly. 
\end{proof}

Lemmata \ref{lem:N1} and 
\ref{lemD} imply off-diagonal decay of the matrix which represents the 
operator $ {\mathcal R}_1 
= {\mathcal D}^{-1} {\mathcal N}_1 $.  

\begin{lem}\label{decayR}
For any $ s \in \N $,  for any $ |j|, |k| > N $,  
\be\label{decayR1off}
|[{\mathcal R}_1]^j_k| \leq C(s)  \frac{ \| (a, b)\|_{s+1}}{ |j-k|^{s-\frac32}} \, , \quad 
\forall  j \neq k \, .
\ee
\end{lem}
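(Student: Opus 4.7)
The plan is to combine directly Lemmata \ref{lem:N1} and \ref{lemD}. Since ${\mathcal D}^{-1}$ is diagonal with entries $1/T^k_k$, the matrix product ${\mathcal R}_1 = {\mathcal D}^{-1}{\mathcal N}_1$ has entries
\[
[{\mathcal R}_1]^j_k \, = \, \frac{1}{T^k_k}\,[{\mathcal N}_1]^j_k\, , \qquad |j|,|k|>N,\ j\ne k,
\]
so that the bound $|1/T^k_k|\lesssim |k|$ from \eqref{Tjjd} together with the off-diagonal decay \eqref{Njk} immediately yield
\[
\bigl|[{\mathcal R}_1]^j_k\bigr|\, \lesssim \, |k|\cdot\frac{\|(a,b)\|_{s+1}}{|k|^{\frac12}\,|j|^{\frac12}\,|j-k|^{s-1}}\, = \, \frac{|k|^{\frac12}}{|j|^{\frac12}}\cdot\frac{\|(a,b)\|_{s+1}}{|j-k|^{s-1}}.
\]

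The only remaining task is to convert the prefactor $|k|^{\frac12}/|j|^{\frac12}$ into a half power of $|j-k|$, which I would do by a short triangle-inequality case split. If $|j|\geq |j-k|$, then $|k|\leq |j|+|j-k|\leq 2|j|$ and the ratio is $\lesssim 1$. If instead $|j|<|j-k|$, then $|k|\leq 2|j-k|$ and, using $|j|\geq 1$, one has $|k|^{\frac12}/|j|^{\frac12}\leq |k|^{\frac12}\lesssim |j-k|^{\frac12}$. Exploiting $|j-k|\geq 1$ in the first case, both cases combine to
\[
\frac{|k|^{\frac12}}{|j|^{\frac12}} \, \lesssim \, |j-k|^{\frac12},
\]
which upgrades the decay $|j-k|^{-(s-1)}$ to the claimed $|j-k|^{-(s-\frac32)}$, yielding \eqref{decayR1off}.

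I do not anticipate any serious obstacle: all the analytic content, namely the non-stationary-phase estimate for ${\mathcal N}_1$ and the Bessel-function lower bound $|T^j_j|\gtrsim |j|^{-1}$, has already been packaged in the two lemmata invoked above. The conceptual point of the short calculation is that inverting the diagonal part of the normal operator costs a full power of $|k|$, half of which is absorbed by the $|k|^{-\frac12}|j|^{-\frac12}$ prefactor of $[{\mathcal N}_1]^j_k$ and the other half of which is traded, via the triangle inequality, for the $|j-k|^{\frac12}$ that degrades the off-diagonal decay rate from $s-1$ to $s-\frac32$.
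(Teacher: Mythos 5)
Your proposal is correct and follows essentially the same route as the paper: multiply the bound \eqref{Njk} by the diagonal bound $|1/T^k_k|\lesssim |k|$ from \eqref{Tjjd}, then trade the prefactor $|k|^{1/2}/|j|^{1/2}$ for $|j-k|^{1/2}$ via a triangle-inequality case split (the paper splits on $|j-k|<|k|/2$ versus $|j-k|\geq |k|/2$, which is an inessential variant of your split on $|j|$ versus $|j-k|$).
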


\begin{proof}
By \eqref{Tjjd} and \eqref{Njk}  
we have, for any $ s \in \N $,  
\be\label{Rjk}
|[{\mathcal R}_1]^j_k| = 
|[{\mathcal D}^{-1}]^k_k| |[{\mathcal N}_1]^j_k| 
 \lesssim_s  \frac{|k|^{\frac12} \| (a, b) 
\|_{s+1}}{|j |^{\frac12} |j-k|^{s-1}} \, , \quad \forall j \neq k \, .
\ee
We now decompose the indices $ (j,k)$ according if
\[
(i) \quad |j-k| < \frac{|k|}{2} \, , \qquad (ii) \quad |j-k| \geq \frac{ |k|}{2} \, . 
\]
In case ($i$) we have $ ||j|-|k|| \leq |j-k| < \frac{ |k|}{2} $ and therefore 
$ |j| \sim | k | $ and, by \eqref{Rjk}, we deduce the bound
\be\label{caseR1}
|[{\mathcal R}_1]^j_k| \lesssim_s  \frac{ \| (a, b) \|_{s+1}}{ |j-k|^{s-1}} \, , \quad 
\forall s \in \N \,  .
\ee
In case ($ii$) we have,  since $ |j| > N \geq 1  $, for any 
$ s \in \N $, 
\be\label{caseR2}
|[{\mathcal R}_1]^j_k| \lesssim_s  \frac{ |k|^{\frac12} \| (a, b) \|_{s+1}}{ |j-k|^\frac12  |j-k|^{s-\frac32}}
\lesssim_s  \frac{ \| (a, b) \|_{s+1}}{   |j-k|^{s-\frac32}}  \, .
\ee
The bounds  \eqref{caseR1}-\eqref{caseR2} directly imply \eqref{decayR1off}. 
\end{proof}

Lemma \ref{decayR} implies that 
$ {\mathcal R}_1 $ 
has  finite 
$ s$-decay norm defined in \eqref{sdecay}.
 
\begin{lem} \label{decayR1}
{\bf (Off-diagonal decay of $ {\mathcal R}_1 $)}
For any $ s \geq 0  $ we have 
\[ 
| {\mathcal R}_1 |_s \leq C(s)   \| (a, b) \|_{s+4}  \, . 
\]
As a consequence  ${\mathcal R}_1 $ is a $ 0$-tame operator with a tame constant 
$ C_{{\mathcal R}_1} (s) \leq C(s) \| (a, b) \|_{s+4}   $, cfr.~\eqref{tameMsa}.
\end{lem}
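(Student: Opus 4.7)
The plan is to first observe that $\mathcal{R}_1 = \mathcal{D}^{-1}\mathcal{N}_1$ is a purely off-diagonal operator: by Lemma~\ref{lemD} the operator $\mathcal{D}$ is diagonal, while $\mathcal{N}_1$ vanishes on the diagonal by its very definition \eqref{defN1}. Hence $[\mathcal{R}_1]^j_j = 0$ for every $|j|>N$, and only the genuinely off-diagonal entries need to be controlled in computing the $s$-decay norm \eqref{sdecay}.

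The heart of the argument will consist in choosing, given a real $s \geq 0$, an integer $n$ so that Lemma~\ref{decayR} can be applied with two competing constraints met simultaneously. First, I need the exponent $n - 3/2$ in the denominator of \eqref{decayR1off} to be large enough that the $\langle m\rangle^{2s}$-weighted sum converges, i.e.\ $n > s + 2$. Second, I need the Sobolev index $n+1$ appearing in the numerator not to exceed $s+4$, in order to reach the advertised bound $\|(a,b)\|_{s+4}$ via monotonicity of the Sobolev norms. The choice $n := \lfloor s \rfloor + 3 \in \N$ satisfies both: one has $n + 1 = \lfloor s\rfloor + 4 \leq s + 4$, and $2(n-s) - 3 = 2\lfloor s\rfloor + 3 - 2s > 1$ strictly, since $\lfloor s\rfloor + 1 > s$ for every real $s$.

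With this $n$, Lemma~\ref{decayR} gives, for every $m \neq 0$,
\begin{equation*}
\sup_{j-k = m} |[\mathcal{R}_1]^j_k| \leq C(s) \, \frac{\|(a,b)\|_{s+4}}{\langle m\rangle^{n-3/2}} \, ,
\end{equation*}
and substituting into \eqref{sdecay} together with the convergent geometric-type sum $\sum_{m\neq 0} \langle m\rangle^{2s - 2n + 3}$ yields the claimed bound $|\mathcal{R}_1|_s \leq C(s)\|(a,b)\|_{s+4}$. For the tame estimate on the action of $\mathcal{R}_1$, I would then invoke \eqref{tameMsa} with $s_0 = 1$, which reads
\begin{equation*}
\|\mathcal{R}_1 u\|_s \leq C(1)\,|\mathcal{R}_1|_1 \, \|u\|_s + C(s) \, |\mathcal{R}_1|_s \, \|u\|_1 \, .
\end{equation*}
Since \eqref{eq:alphabsmall} forces $|\mathcal{R}_1|_1 \leq C\|(a,b)\|_5 \leq C\delta$ to be uniformly bounded, this is precisely the $0$-tame estimate \eqref{sigmatame} with tame constant $C_{\mathcal{R}_1}(s) \leq C(s)\|(a,b)\|_{s+4}$, which is non-decreasing in $s$, as required.

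I do not anticipate a serious obstacle: the argument is essentially a bookkeeping exercise matching the regularity budget of Lemma~\ref{decayR} to the summability required by the $s$-decay norm. The only slightly delicate point is that Lemma~\ref{decayR} was proved only for integer exponents, so one must work with $n = \lfloor s\rfloor + 3$ rather than the naive $s + 3$, and verify that the remaining gap $\lfloor s\rfloor + 1 - s > 0$ still keeps the series convergent while simultaneously respecting the budget of $s+4$ derivatives on $(a,b)$.
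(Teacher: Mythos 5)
Your argument is correct and is exactly the computation the paper leaves implicit when it asserts that Lemma \ref{decayR} implies finiteness of the $s$-decay norm: apply \eqref{decayR1off} with the integer exponent $\lfloor s\rfloor+3$, observe that the diagonal of $\mathcal{R}_1$ vanishes, sum over $m$, and feed the result into \eqref{tameMsa} with $s_0=1$. The only cosmetic caveat is that the constant produced by the convergent sum $\sum_{m\neq0}\langle m\rangle^{2s-2\lfloor s\rfloor-3}$ is not non-decreasing in $s$ (it blows up as $s$ approaches an integer from below), but this does not affect the validity of the tame estimate for each fixed $s$ and is an issue the paper's own formulation shares.
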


{\bf Step 2: Estimates of 
$ {\mathcal R}_2  $. }
The  operator $ {\mathcal R}_2 
=   - {\mathcal D}^{-1}  M_R^L (M^L_L)^{-1} M^R_L $
is a small finite rank operator, since  $M_R^L $ and $ M^R_L $
are small finite rank operators  as in \eqref{Aalpha0}. The following estimates hold.

\begin{lem}\label{lem:restireso}
{\bf (Bound of $ {\mathcal R}_2 $)} Assume \eqref{eq:alphabsmall}. 
The operator $ {\mathcal R}_2  $ 
satisfies for any  $ s \geq \frac 32 $, any $ \gamma \in H^0_0(\T) $,   
\[
\| {\mathcal R}_2 \gamma \|_s \leq C(s,N) \| (a,b) \|_{s+ \frac52} \| \gamma \|_0 \, . 
\]
\end{lem}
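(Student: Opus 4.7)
The plan is to exploit the finite rank structure of $\mathcal{N}_2 = -M_R^L (M_L^L)^{-1} M_L^R$ inherited from Lemma \ref{lem:coupling}, combined with the one-derivative smoothing provided by $\mathcal{D}^{-1}$ via \eqref{D-1s}. Since $\mathcal{N}_2$ factors through the finite-dimensional space $H_L$, it is a finite rank operator whose Sobolev bounds are governed entirely by the functions $g_j$ from Lemma \ref{lem:coupling}.

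First I would write $\mathcal{N}_2\gamma$ explicitly. By Lemma \ref{lem:coupling} one has $M_L^R \gamma = \sum_{0<|j|\leq N} (\gamma, g_j)_{L^2}\, e_j \in H_L$. By Lemma \ref{invALL} the inverse $(M_L^L)^{-1}$ has operator norm bounded by $C(N)$ on the finite-dimensional space $H_L$; writing $(\mu_{kj})_{0<|j|,|k|\leq N}$ for its matrix in the basis $\{e_j\}$ (so that $|\mu_{kj}|\leq C(N)$), and using that $M_R^L$ sends $e_k$ to $g_k$, I obtain the finite rank expression
\[
\mathcal{N}_2\gamma = -\sum_{0<|j|,|k|\leq N} \mu_{kj}\,(\gamma, g_j)_{L^2}\, g_k.
\]
Cauchy--Schwarz and a sum of at most $(2N)^2$ terms then give
\[
\|\mathcal{N}_2\gamma\|_{s+1} \leq C(N)\max_{0<|j|\leq N}\|g_j\|_0 \cdot \max_{0<|k|\leq N}\|g_k\|_{s+1}\cdot \|\gamma\|_0.
\]

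Next I would invoke the bound $\sup_{|j|\leq N}\|g_j\|_\sigma \leq C(\sigma,N)\|(a,b)\|_{\sigma+\frac32}$ from Lemma \ref{lem:coupling}, which is valid only for $\sigma\geq \frac52$. At height $\sigma=0$, I would use instead $\|g_j\|_0\leq \|g_j\|_{5/2}\leq C(N)\|(a,b)\|_4$ and absorb this into the constant via \eqref{eq:alphabsmall}. At height $\sigma=s+1$, the constraint $\sigma\geq\frac52$ forces $s\geq\frac32$, precisely the threshold in the statement, and yields $\max_k\|g_k\|_{s+1}\leq C(s,N)\|(a,b)\|_{s+\frac52}$. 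Combining the two,
\[
\|\mathcal{N}_2\gamma\|_{s+1}\leq C(s,N)\,\|(a,b)\|_{s+\frac52}\,\|\gamma\|_0.
\]

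Finally, since $\mathcal{R}_2=\mathcal{D}^{-1}\mathcal{N}_2$ and \eqref{D-1s} gives $\|\mathcal{D}^{-1}u\|_s\leq C\|u\|_{s+1}$, I conclude
\[
\|\mathcal{R}_2\gamma\|_s = \|\mathcal{D}^{-1}\mathcal{N}_2\gamma\|_s \leq C\,\|\mathcal{N}_2\gamma\|_{s+1} \leq C(s,N)\,\|(a,b)\|_{s+\frac52}\,\|\gamma\|_0,
\]
which is the claim. I do not foresee a genuine obstacle: the lemma reduces to an essentially algebraic computation exploiting the finite-dimensional range of the coupling and the regularizing action of $\mathcal{D}^{-1}$. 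The only subtle bookkeeping is that the threshold $s\geq\frac32$ is forced by the domain of validity of Lemma \ref{lem:coupling} applied at height $s+1$.
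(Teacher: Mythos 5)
Your argument is correct and follows exactly the route the paper intends: the paper's proof of this lemma is the one-line citation of \eqref{D-1s} together with Lemmata \ref{lem:coupling} and \ref{invALL}, i.e.\ precisely the finite-rank factorization through $H_L$, the bound $\|(M_L^L)^{-1}\|\lesssim_N 1$, the estimates \eqref{estgj} on the $g_j$ (whence the threshold $s+1\geq\tfrac52$), and the one-derivative gain of $\mathcal D^{-1}$. You have merely written out explicitly what the paper leaves implicit.
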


\begin{proof}
By \eqref{D-1s} and  Lemmata \ref{lem:coupling} and  \ref{invALL}. 
\end{proof}

{\bf Step 3: Estimate of $ {\mathcal R}_3 $. }
Exploiting the fact that $ E_1 $ in \eqref{formE1} 
is bounded as an operator of order $ - \frac32 $
we now prove that the operator $ {\mathcal N}_3 $ 
in \eqref{defN3} is  small of size $ O(1/ \sqrt{N}) $ 
as a bounded operator of order 
$ - 1  $. 
Let us start showing the following.

\begin{lem}\label{restoE1}
For any $ s \geq 3 $ we have 
\begin{equation}
\label{PRE1picco}
\| \Pi_R E_1  [\alpha,\beta] \|_s \leq 
C(s) \frac{1}{\sqrt{N}}  \| \alpha  \|_{s- 1}   + C(s) \frac{1}{\sqrt{N}}  \| (a,b) \|_{s}  \| \alpha \|_{2}   \, . 
\end{equation}
For any $ s \geq 2  $ we have 
\begin{equation}
 \label{PRE1piccoad}
\| (\Pi_R E_1)^*   \gamma \|_s \leq 
C(s) \frac{1}{\sqrt{N}}  \| \gamma \|_{s- 1}  +
C(s) \frac{1}{\sqrt{N}}  \| (a,b) \|_{s}  \| \gamma \|_{1}  \, . 
\end{equation}
\end{lem}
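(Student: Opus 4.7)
The plan is to exhibit $E_1$ as a sum of two operators of the FIO-adjoint form \eqref{eq:E*App}, each with a symbol of order $-3/2$, and then exploit the fact that $\Pi_R$ makes the symbol vanish for $|k|\leq N$ in order to gain the smallness factor $N^{-1/2}$ via Lemma~\ref{lem:piccolosimbolo}. Concretely, splitting $K(\theta)=\tfrac12(r'(\theta)e^{i\theta}+\overline{r'(\theta)}e^{-i\theta})$ in \eqref{defK} and using $A(x)=A_*+a(x)$, $B(x)=x+b(x)$, one writes \eqref{formE1} as
\[
E_1[\alpha,\beta]=\sum_{k\neq 0}\left(\int_\T \tfrac12 \overline{r'(kA(x))}e^{-ikA_*}\,\alpha(x)\,e^{-ik(x+b(x)-a(x))}\diff x\right)e_k+\text{(conjugate term with $p=b+a$)},
\]
so that $E_1=E^{*,+}+E^{*,-}$ in the notation of \eqref{eq:E*App}, with $u=\alpha$, phases $p_\pm(x)=b(x)\pm a(x)$ and symbols $h_\pm(x,k)$ built from $r'(kA(x))$ up to a unimodular factor $e^{\mp ikA_*}$. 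By Lemma~\ref{lem:J1}(1), $r'$ satisfies \eqref{eq:1lemma1s2} with $\sigma=-3/2$, so Lemma~\ref{lem:Sobosimbol} gives $|h_\pm|_{-3/2,s}\leq C(s)(1+\|a\|_s)$ for all $s\geq 0$.

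For the first estimate, I apply $\Pi_R$ on the output side, which amounts to replacing $h_\pm$ by the truncated symbol vanishing for $|k|\leq N$. Lemma~\ref{lem:piccolosimbolo} with $\sigma_1=-1>\sigma_2=-3/2$ then gives the crucial improvement
\[
|h_\pm|_{-1,s}\leq N^{-1/2}|h_\pm|_{-3/2,s}\leq C(s)N^{-1/2}(1+\|a\|_s).
\]
Since $\|p_\pm\|_{C^{2}}\leq\|(a,b)\|_{C^{2}}\lesssim \|(a,b)\|_6<\delta$ and $\|p_\pm\|_{3}\leq 1$ by \eqref{eq:alphabsmall} with $\delta$ small, Lemma~\ref{lem:Estargene} applies with $\sigma=-1$, $s_0=1$ for all $s\geq\max\{1,s_0+1-\sigma\}=3$. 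Substituting the bound on $|h_\pm|_{-1,\cdot}$, bounding $\|p_\pm\|_{s}\leq \|(a,b)\|_s$ and using $\|a\|_2\leq\delta\leq 1$ yields
\[
\|\Pi_R E_1[\alpha,\beta]\|_s\leq C(s)\frac{1}{\sqrt N}\|\alpha\|_{s-1}+C(s)\frac{1}{\sqrt N}\bigl(1+\|a\|_s+\|(a,b)\|_s\bigr)\|\alpha\|_{2}.
\]
For $s\geq 3$ we have $\|\alpha\|_2\leq\|\alpha\|_{s-1}$, so the constant "$1+\|a\|_s$" contribution can be merged with the first term (up to renaming $C(s)$), producing exactly \eqref{PRE1picco}.

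For the second estimate, since $\Pi_R^{*}=\Pi_R$, one has $(\Pi_R E_1)^{*}=E_1^{*}\circ\Pi_R=(E^{*,+})^{*}\Pi_R+(E^{*,-})^{*}\Pi_R$, which is a sum of two operators of the FIO form $E$ in \eqref{eq:EApp} with the same symbols $h_\pm$ and phases $p_\pm$, but now with the symbol vanishing for $|j|\leq N$ on the \emph{input} Fourier index $j$ (because of the factor $\Pi_R$ acting on the right). Lemma~\ref{lem:piccolosimbolo} again gives $|h_\pm|_{-1,s}\leq C(s)N^{-1/2}(1+\|a\|_s)$, and Lemma~\ref{lem:Eusmall} with $\sigma=-1$ and $s_0=1$ applies for all $s\geq s_0+1=2$, producing
\[
\|(\Pi_R E_1)^{*}\gamma\|_s\leq C(s)\frac{1}{\sqrt N}\|\gamma\|_{s-1}+C(s)\frac{1}{\sqrt N}\bigl(1+\|a\|_s+\|(a,b)\|_s\bigr)\|\gamma\|_{1}.
\]
Since $\|\gamma\|_1\leq\|\gamma\|_{s-1}$ for $s\geq 2$, the "$1+\|a\|_s$" part is again absorbed, giving \eqref{PRE1piccoad}. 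No real analytic difficulty arises; the one point that needs attention is checking that the smallness hypotheses on the phase ($\|p_\pm\|_{C^{2}}<1/2$ and $\|p_\pm\|_{3}\leq 1$) required by Lemmata~\ref{lem:Eusmall} and \ref{lem:Estargene} are guaranteed by \eqref{eq:alphabsmall} with $\delta$ small (which follows from $s_0+2=3\leq 6$), and that the gain $N^{-1/2}$ really comes from the extra $1/2$ order of smoothing of $r'$ compared with $r$, i.e.~from the first derivative of $J_1$ being one order more decaying than $J_1$ itself.
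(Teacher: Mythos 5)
Your proposal is correct and follows essentially the same route as the paper: decompose $K(kA(x))=\mathrm{Re}(r'(kA(x))e^{ikA(x)})$ into two FIO pieces of the form \eqref{eq:E*App} with phases $b\pm a$ and symbols of order $-\tfrac32$ (Lemmata \ref{lem:J1} and \ref{lem:Sobosimbol}), gain the factor $N^{-1/2}$ by viewing the $\Pi_R$-truncated symbol as one of order $-1$ via Lemma \ref{lem:piccolosimbolo}, and conclude with Lemma \ref{lem:Estargene} for $\Pi_R E_1$ and Lemma \ref{lem:Eusmall} for its adjoint. The only cosmetic difference is that the paper inserts a smooth cut-off $\chi_{>N}(k)$ where you use the sharp truncation induced by $\Pi_R$, which is immaterial for the symbol norms involved.
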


\begin{proof}
The operator $ E_1 $ is given in \eqref{formE1} with $ K $ defined in \eqref{defK}, and thus
\begin{align}
\Pi_R E_1(a,b)[\alpha,\beta] 
& := 
 \sum_{|k| > N } \Big ( \int_\T K(kA(x)) \alpha(x) e^{- ikB(x)}\, \diff x\Big ) e_k \notag  \\
& = 
 \sum_{k \in \Z } \Big (\int_\T \mathrm{Re} \big(  r'( kA(x)) e^{i kA(x)} \big) \chi_{> N} (k) \alpha(x) e^{- ikB(x)}\, \diff x\Big ) e_k \label{PIRE1}
\end{align}
where $ \chi_{> N} (z)$ is a $ C^\infty $ cut-off function 
equal to $ 1 $ for $ |z| \geq N +1  $ and $ 0 $ for $ |z| \leq N  $. 
 The operator in \eqref{PIRE1} is the sum 
\be\label{pezzE1}
\begin{aligned}
& \ \ \frac12 \sum_{k \in \Z } 
 \Big (\int_\T  r'(kA(x)) e^{i k A_* } \chi_{>N} (k) 
 \alpha(x) e^{-ik(x+b(x) -a(x))}\, \diff x\Big ) e_k \\
 & +
 \frac12 \sum_{k \in \Z } 
 \Big (\int_\T  \overline{r'(kA(x))} e^{-i k A_* }  \chi_{>N} (k) 
 \alpha(x) e^{-ik(x+ b(x) + a(x))}\, \diff x\Big ) e_k \, , 
 \end{aligned}
 \ee
 namely of operators of the form \eqref{eq:E*App} 
 with  
$ u \equiv \alpha $, 
symbols $ h(x,k) \equiv \tfrac12 \, \overline{r'(kA(x))} e^{- i k A_*}\chi_{>N} (k) $ 
and its conjugate, and $ p(x) =  b(x) \pm a(x)  $.
By  Lemmata \ref{lem:J1} 
 and \ref{lem:Sobosimbol} 
\be\label{h32b}
 |h|_{- \frac32,s} \lesssim_s 1 + \| a \|_s \, , \quad \forall s\geq 0 \, . 
 \ee 
 The function $ p $ satisfies, for any $ s \in \R $,  
\be\label{psb}
\| p \|_s  \leq  \| b\|_s  + \| a\|_s  \lesssim_{s} \| (a,b) \|_s \, . 
 \ee
Regarding $ h $
as a  symbol of order $ - 1 $,
 by Lemma \ref{lem:piccolosimbolo} and \eqref{h32b} we deduce that 
\be\label{simbolo-12}
| h |_{-1,s} \leq \frac{1}{\sqrt{N}} | h |_{-\frac32,s} 
\lesssim_s \frac{1}{\sqrt{N}} (1+ \| a \|_{s}), \quad \forall s\geq 0  \, . 
\ee
Thus \eqref{PRE1picco} follows by Lemma \ref{lem:Estargene} for any $ s
\geq \max\{1, s_0 +1 - \sigma \} = 3 $
and \eqref{simbolo-12}, \eqref{psb}.  

Finally, since the adjoint $ (\Pi_R E_1)^* $
is  the sum of adjoint of operators of the form \eqref{pezzE1}, 
the tame estimates \eqref{PRE1piccoad}  follow, for any $ s
\geq s_0 +1 = 2 $, 
by Lemma \ref{lem:Eusmall}  for a symbol of order $ - 1$ and \eqref{simbolo-12}, \eqref{psb}.   
\end{proof}

The operators $ \Pi_R E_0 $ and 
$ (\Pi_R E_0)^* $, where $E_0$ is defined as in \eqref{formE0},   have  order $ - \frac12 $ 
and satisfy tame estimates 
as $ \diff S $ and $ \diff S^* $, cfr.~Lemmata \ref{lem:tamedS} 
and \ref{dSadj}. 

\begin{lem}\label{restoE2}
For any $ s \geq 5/ 2  $ we have 
\be \label{PRE0}
 \| \Pi_R E_0  [\alpha,\beta]  \|_s \leq 
 C(s) \| (\alpha,\beta)  \|_{s- \frac12}  + C(s)  \| (a,b) \|_{s+ \frac12}  \| (\alpha,\beta)  \|_{2} .
\ee
For any $ s \geq 2 $
\be \label{PRE0ad}
\| (\Pi_R E_0)^*   \gamma \|_s \leq 
  C(s) \| \gamma \|_{s- \frac12}  + C(s) \| (a,b) \|_{s}  \| \gamma \|_{\frac32}  \, . 
\ee
\end{lem}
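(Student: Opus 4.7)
\noindent\textbf{Proof plan for Lemma \ref{restoE2}.} The strategy is to treat $\Pi_R E_0$ and $(\Pi_R E_0)^*$ exactly as $\diff S$ and $(\diff S)^*$ were handled in Lemmata \ref{lem:tamedS} and \ref{dSadj}: both operators have the same structural form, being finite sums of FIOs with symbols of order $-\tfrac12$ and phases of the type $b(x)\pm a(x)$. Since $\Pi_R$ is a bounded projection on every $H^s(\T)$, the tame bounds for $E_0$ and $E_0^*$ transfer directly to $\Pi_R E_0$ and $(\Pi_R E_0)^*$.

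First, starting from \eqref{formE0} and using $H_1(\theta)=e^{i\theta}r(\theta)$ together with $\mathrm{Re}(z)=\tfrac12(z+\bar z)$, $\mathrm{Im}(z)=\tfrac{1}{2i}(z-\bar z)$, I would expand $E_0(a,b)[\alpha,\beta]$ as a sum of four operators of the form $E^*$ in \eqref{eq:E*App}, each one corresponding to the choice $u\in\{\alpha,\beta\}$, phase $p(x)=b(x)\pm a(x)$, and symbol of the shape
\[
h(x,k) \;=\; c\,e^{\pm ikA_*}\,r(kA(x)) \quad\text{or its conjugate},
\]
with $c\in\{\pm\tfrac12,\pm\tfrac{i}{2}\}$ and $h(x,0)=0$. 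By Lemma \ref{lem:J1}(1) the function $r$ satisfies \eqref{eq:1lemma1s2} with $\sigma=-\tfrac12$, so Lemma \ref{lem:Sobosimbol} yields $|h|_{-\frac12,s}\lesssim_s 1+\|a\|_s$ for every $s\ge 0$. Under \eqref{eq:alphabsmall} the phase $p=b\pm a$ satisfies $\|p\|_{C^{s_0+1}}<1/2$ and $\|p\|_{s_0+2}\le 1$, so Lemma \ref{lem:Estargene} applies with $\sigma=-\tfrac12$. Its estimate \eqref{Estar} evaluated at $u$ replaced by $\alpha$ or $\beta$, together with $\|(a,b)\|_{s+\frac12}\ge \|p\|_{s+\frac12}$, gives exactly \eqref{PRE0} for any $s\geq \max\{1,s_0+1-\sigma\}=\tfrac52$, after composing with the bounded projector $\Pi_R$.

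For the adjoint, I would write $(\Pi_R E_0)^*=E_0^*\circ \Pi_R$ and use that each component of $E_0^*$ is, by the same expansion, a finite sum of FIOs of the form $E$ in \eqref{eq:EApp} with symbols of order $-\tfrac12$ and phases $p(x)=b(x)\pm a(x)$ (this is the analogue of the identity used in the proof of Lemma \ref{dSadj}). Applying Lemma \ref{lem:Eusmall} with $u=\gamma$, $\sigma=-\tfrac12$ and the symbol bound $|h|_{-\frac12,s}\lesssim_s 1+\|a\|_s$ already established yields \eqref{PRE0ad} for any $s\geq s_0+1=2$, again using $\|p\|_s\leq \|(a,b)\|_s$.

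There is no real obstacle: the statement is essentially a restatement, at the level of the single FIO building blocks $E_0,E_0^*$, of the tame estimates already proved for $\diff S$ and $(\diff S)^*$. The only bookkeeping point is the threshold for $s$: the estimate for $E_0$ requires $s\ge \tfrac52$ because $\sigma=-\tfrac12$ forces $s\ge s_0+1-\sigma=\tfrac52$ in Lemma \ref{lem:Estargene}, while the adjoint estimate only needs $s\ge s_0+1=2$ from Lemma \ref{lem:Eusmall}. These match the thresholds stated in \eqref{PRE0} and \eqref{PRE0ad}.
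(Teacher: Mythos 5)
Your proposal is correct and follows exactly the route the paper intends: the paper states Lemma \ref{restoE2} without a separate proof, merely noting that $\Pi_R E_0$ and $(\Pi_R E_0)^*$ have order $-\tfrac12$ and "satisfy tame estimates as $\diff S$ and $\diff S^*$", i.e.\ the decomposition into FIOs of the form \eqref{eq:E*App} and \eqref{eq:EApp} with symbols $c\,e^{\pm ikA_*}r(kA(x))$ of order $-\tfrac12$ and phases $b\pm a$, followed by Lemmata \ref{lem:Sobosimbol}, \ref{lem:Estargene} and \ref{lem:Eusmall}, precisely as in the proofs of Lemmata \ref{lem:tamedS} and \ref{dSadj}. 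Your bookkeeping of the thresholds $s\ge\tfrac52$ and $s\ge 2$ also matches the paper's.
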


We deduce that the operator
${\mathcal R}_3 $
is bounded, of size $ 1/ \sqrt{N} $, and satisfies 
the tame estimates  \eqref{Rhs-1} below, where we note that 
the constant in front of the high norm $  \| \gamma \|_{s} $ is {\it independent} of $ s $. 
\begin{lem}
\label{lemR2}
{\bf (Estimates of $ {\mathcal R}_3 $)}
The operator
$ {\mathcal R}_3 =
{\mathcal D}^{-1}  {\mathcal N}_3 $ 
 satisfies, for any $ s \geq 2 $, 
\be\label{Rhs-1}
\| {\mathcal R}_3 \gamma \|_s \leq \frac{1}{\sqrt{N}}  \| \gamma \|_{s}  + 
C_3 (s) \frac{1}{\sqrt{N}} \big( 1 + \| (a,b) \|_{s+ \frac32}\big) 
 \| \gamma \|_{\frac32}   \, . 
\ee
As a consequence  
the operator ${\mathcal R}_3 $ 
is $ 0$-tame with  tame constant $ C_3 (s) \frac{1}{\sqrt{N}} \big( 1 + \| (a,b) \|_{s+ \frac32}\big) $.  
\end{lem}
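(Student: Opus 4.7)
\textbf{Plan for Lemma \ref{lemR2}.} The approach is to first apply \eqref{D-1s} to reduce the problem to estimating $\|\mathcal N_3\gamma\|_{s+1}$, then to estimate each of the three compositions in $\mathcal N_3$ using the bounds of Lemmata \ref{restoE1} and \ref{restoE2}, and finally to restore the asymmetric tame form by interpolation.

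By \eqref{D-1s} we have $\|\mathcal R_3\gamma\|_s\leq C\,\|\mathcal N_3\gamma\|_{s+1}$, so it suffices to estimate each of the three terms of $\mathcal N_3$ in \eqref{defN3}. Since $E_1[\alpha,\beta]$ depends only on $\alpha$, its adjoint satisfies $(\Pi_R E_1)^*\gamma=(A_1,0)$ with $A_1$ controlled by \eqref{PRE1piccoad}. Composing with $\Pi_R E_0$ via \eqref{PRE0} applied at regularities $\tau=s+\tfrac12\geq \tfrac52$ (requiring $s\geq 2$) and $\tau=2$, and absorbing $\|(a,b)\|_6<\delta$, one finds
\begin{equation*}
\|\Pi_R E_0(\Pi_R E_1)^*\gamma\|_{s+1}\leq \frac{C(s)}{\sqrt N}\|\gamma\|_{s-\frac12}+\frac{C(s)}{\sqrt N}\|(a,b)\|_{s+\frac32}\|\gamma\|_{\frac32}.
\end{equation*}
The second term $\Pi_R E_1(\Pi_R E_0)^*\gamma$ is estimated symmetrically, applying \eqref{PRE0ad} to bound the first component of $(\Pi_R E_0)^*\gamma$ and then \eqref{PRE1picco} to the outer $\Pi_R E_1$; the result has the same shape. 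The third term $\Pi_R E_1(\Pi_R E_1)^*\gamma$ combines two applications of Lemma \ref{restoE1} and carries the even stronger $1/N$ smallness, which is immediately absorbable.

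The remaining step is to convert the contribution $\frac{C(s)}{\sqrt N}\|\gamma\|_{s-\frac12}$ into the required asymmetric form. From the single-function version of \eqref{asintpo} (take the second factor constant), for every $\epsilon>0$ and $s\geq 2$ there exists $C(\epsilon,s)>0$ with
\begin{equation*}
\|\gamma\|_{s-\frac12}\leq \epsilon\,\|\gamma\|_s+C(\epsilon,s)\,\|\gamma\|_{\frac32}.
\end{equation*}
Choosing $\epsilon=1/(3\,C(s))$ makes the coefficient in front of $\|\gamma\|_s$ collapse to $1/(3\sqrt N)$ per term; summing the three contributions gives precisely \eqref{Rhs-1}.

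\textbf{Main obstacle.} The critical point is that the coefficient of $\|\gamma\|_s$ in \eqref{Rhs-1} must be \emph{independent of} $s$, which is exactly what the Neumann series Lemma \ref{lem:invertame} demands in order to invert $\mathrm{Id}+\mathcal R_1+\mathcal R_2+\mathcal R_3$ uniformly in every Sobolev norm. Generic FIO tame bounds (Lemmata \ref{lem:Eusmall}, \ref{lem:Estargene}) produce an $s$-dependent prefactor on the high-norm term, and it is precisely this $s$-dependence that the interpolation step cancels. The construction succeeds only because $E_1$ is genuinely half a derivative more regularizing than $E_0$ (thanks to the decay $r'(\theta)=O(\langle\theta\rangle^{-3/2})$ in Lemma \ref{lem:J1}): spending that extra half-derivative on the interpolation is what upgrades the standard tame estimates for $\mathcal N_3$ into the asymmetric small tame estimates needed to close the inversion of $\tilde M_R^R$.
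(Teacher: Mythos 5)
Your proposal is correct and follows essentially the same route as the paper: reduce via the gain of one derivative from $\mathcal D^{-1}$ in \eqref{D-1s}, bound the three compositions in $\mathcal N_3$ with Lemmata \ref{restoE1} and \ref{restoE2}, and then use the interpolation inequality \eqref{asintpo} with a suitably small $\epsilon$ to trade the $s$-dependent constant in front of $\|\gamma\|_{s-\frac12}$ for the $s$-independent coefficient $1/\sqrt N$ in front of $\|\gamma\|_s$. The only (inconsequential) slip is the stated index $\tau=s+\tfrac12$ for the outer application of \eqref{PRE0}, which should be $s+1$ to produce the claimed $\|\cdot\|_{s+1}$ bound.
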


\begin{proof}
By \eqref{defN3}, \eqref{D-1s} and Lemmata  \ref{restoE1} and \ref{restoE2} we deduce that, 
for any $ s \geq 2  $, 
\begin{align} 
\| {\mathcal R}_3 \gamma \|_s 
& \leq 
C' (s) \frac{1}{\sqrt{N}} \| (a,b) \|_{s+\frac32}  \| \gamma \|_{\frac32}   + 
C' (s) \frac{1}{\sqrt{N}}  \| \gamma \|_{s- \frac12} \notag  \\
& \leq 
C' (s) \frac{1}{\sqrt{N}} \| (a,b) \|_{s+\frac32}  \| \gamma \|_{\frac32}   + 
C' (s) \frac{1}{\sqrt{N}} \Big(  \epsilon \| \gamma \|_{s} + 
C(\epsilon)\| \gamma \|_{\frac32} \Big)    \notag
\end{align}
by the interpolation estimate \eqref{asintpo}
with $ v = 1 $, $ \gamma \equiv u $, $ s_1 = \tfrac32  $, $ q_1 = s- 2 $, 
$ q_2 = \frac12 $. By choosing 
$ \epsilon = 1 / C' (s) $ we deduce \eqref{Rhs-1}.
\end{proof}

\noindent
{\bf Conclusion.} 
By Lemmata \ref{decayR1}, \ref{lem:restireso} and \ref{lemR2} 
the operator $ {\mathcal R}_1 + {\mathcal R}_2 + {\mathcal R}_3 $ is a 
$ 0$-tame operator with, for any $ s \geq s_1 \equiv 2 $,   a tame constant (cfr.~\eqref{sigmatame})
\be\label{tameR1R2R3}
C_{{\mathcal R}_1 + {\mathcal R}_2 + {\mathcal R}_3} (s) \leq 
\frac{C_4(s)}{\sqrt{N}}(1+ \| (a,b) \|_{s+ \frac32}) + C(s,N) \| (a,b) \|_{s+ 4} \, .
\ee
For $ s = s_1 $ we have 
\[ 
C_{{\mathcal R}_1 + {\mathcal R}_2 + {\mathcal R}_3} (s_1) < 
\frac{2C_4(s_1)}{\sqrt{N}} + C(s_1,N) \| (a,b) \|_{s_1+ 4} \leq \frac14 
\]
by taking
\be\label{abovesmm}
N := \max \Big\{ \big[ \big(16 C_4 (s_1) \big)^2 \big] + 1, N_0 \Big\}   \qquad \text{and} \qquad 
 C(s_1,N) \| (a,b) \|_{s_1+ 4} \leq \frac18 
\ee
 where $N_0$ is given by Lemma \ref{lemD}.  The inequality \eqref{abovesmm}
is  implied by the smallness condition \eqref{eq:alphabsmall}
with  $ \delta  := \frac{1}{8 C(s_1,N)} $.  
As a consequence 
Lemma  \ref{lem:invertame} implies that 
$ \text{Id} + {\mathcal R}_1 + {\mathcal R}_2 + {\mathcal R}_3$ is invertible 
and, recalling  \eqref{TabDabNab},  
\[
(\tilde M_R^R)^{-1} = 
 \Big( \text{Id} + {\mathcal R}_1 + {\mathcal R}_2 + {\mathcal R}_3
\Big)^{-1} {\mathcal D}^{-1} \, . 
\]
Finally using also  \eqref{1+Rstame}, \eqref{tameR1R2R3} 
and \eqref{D-1s}, 
we obtain, for any $ s \geq s_1 = 2  $ ($ N $ is fixed in \eqref{abovesmm} and $ s_1 = 2 $)
\[
\begin{aligned}
\| (\tilde M_R^R)^{-1} \gamma \|_s 
& =  \Big\| \Big( \text{Id} + {\mathcal R}_1 + {\mathcal R}_2 + {\mathcal R}_3 \Big)^{-1} {\mathcal D}^{-1} \gamma \Big\|_s \\
& \leq 2  \| {\mathcal D}^{-1} \gamma  \|_{s} +  
C_5 (s) \big( 1+  \| (a,b) \|_{s+ 4}  \big)   \| {\mathcal D}^{-1} 
\gamma  \|_{s_1} \\
& \leq \max \{ 2, C_5 (s)\}  \| \gamma \|_{s+1} +  C_5(s) \| (a, b) \|_{s+4} \|  \gamma  \|_{3} \, .   
\end{aligned} 
\]
This proves Proposition \ref{invpropAtilde}, thus completing the proof of Theorem \ref{thm:ri}.

\section{Proof of Theorem \ref{thm:main}}\label{sec:NM}

Theorem \ref{thm:main} is proved by the application of a Nash--Moser 
implicit function theorem.  
We report here a simplified version of \cite[Theorem 5.1 and Corollary 5.2]{Ambrozio:2021}   
 (in our application we have 
a right inverse of the linearized operator, not just an approximate one).
We  refer to \cite{Hamilton}, cfr.~\eqref{defSmtame} below, 
 for the notion of a smooth tame map.  

\begin{thm}[\cite{Ambrozio:2021}]\label{thm:CM}
Let $ {\mathcal F}  $ and $ {\mathcal H } $ 
be tame Fr\'echet spaces and let 
$ {\mathcal S} : {\mathcal U} \subset {\mathcal F}  \mapsto  {\mathcal H}   $
be a smooth tame map defined on an open set containing the origin
${\mathcal U} \subset {\mathcal F}  $, 
with $ {\mathcal S} (0) = 0 $. 
 Suppose  that, for any $ u \in {\mathcal U} $,  there exists 
 a  right inverse $ {\mathcal R} (u)$ of $ \diff {\mathcal S} (u) $ 
 with the property that 
\[ {\mathcal R} : {\mathcal U} \times {\mathcal H}  \mapsto {\mathcal F} \, ,
\quad (u,\gamma) \mapsto  {\mathcal R } ( u ) [\gamma ] \,,
\]   
is a  smooth tame map. 
Then, 
 for any $  v \in  \ker \diff {\mathcal S} (0) $ there exists a smooth 
$ 1 $-parameter family $ u (\tau)  $, $ \tau \in (-\delta, \delta)$, such that
\[
u(0) = 0 \, , \quad \frac{d}{d\tau}\Big |_{\tau=0} u(\tau) = v \, , \qquad 
{\mathcal S} (u (\tau)) = 0  \qquad \text{for any} \quad \tau \in (-\delta, \delta) \, . 
\] 
\end{thm}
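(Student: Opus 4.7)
The plan is to prove this Nash--Moser-type implicit function theorem via a smoothed Newton iteration in the style of Hamilton. Given $v \in \ker \diff \mathcal{S}(0)$, I would seek the desired curve in the form
\[
u(\tau) = \tau v + w(\tau), \qquad w(0) = 0,
\]
so that the equation $\mathcal{S}(u(\tau)) = 0$ becomes $\mathcal{S}(\tau v + w(\tau)) = 0$. The crucial initial observation is that, since $\mathcal{S}(0) = 0$ and $\diff \mathcal{S}(0)[v] = 0$, the residual of the trivial guess $w \equiv 0$ satisfies
\[
\mathcal{S}(\tau v) = \tau^2 \int_0^1 (1-s)\, \diff^2 \mathcal{S}(s\tau v)[v,v]\, ds = O(\tau^2)
\]
in every tame seminorm (by smoothness of $\mathcal{S}$ and the tame estimates for $\diff^2 \mathcal{S}$). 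This quadratic smallness is what enables the Newton iteration to overcome the loss of derivatives of $\mathcal{R}$.

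Next, I would construct a sequence $\{w_n(\tau)\}$ by the smoothed Newton scheme
\[
w_{n+1}(\tau) = w_n(\tau) - S_{N_n}\,\mathcal{R}\bigl(\tau v + w_n(\tau)\bigr)\,\mathcal{S}\bigl(\tau v + w_n(\tau)\bigr),
\]
where $\{S_N\}$ is the smoothing family on the tame Fr\'echet space $\mathcal{F}$ and $N_n$ is a rapidly increasing sequence (the standard choice is $N_n = N_0^{\chi^n}$ for some $\chi \in (1,2)$). The role of $S_{N_n}$ is to regain the derivatives lost by $\mathcal{R}$: at the cost of a controllable polynomial blow-up in a high Fr\'echet seminorm, it keeps $w_{n+1}$ smooth while introducing only a mild error, tamed by the standard smoothing inequalities $\|(I - S_N) u\|_{s_0} \leq C N^{-(s_1 - s_0)} \|u\|_{s_1}$ and $\|S_N u\|_{s_1} \leq C N^{s_1 - s_0} \|u\|_{s_0}$.

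The analytic heart of the proof is the inductive claim that, for $|\tau|$ sufficiently small, there are constants such that
\[
\|\mathcal{S}(\tau v + w_n)\|_{s_0} \leq \varepsilon_n, \qquad \|w_{n+1} - w_n\|_{s_0} \leq C \varepsilon_n, \qquad \|w_n\|_{s_1} \leq C N_n^{\alpha},
\]
with $\varepsilon_n$ decaying super-exponentially (say $\varepsilon_{n+1} \leq \varepsilon_n^{\chi}$). The key algebraic identity driving the iteration comes from Taylor's formula together with the right-inverse property $\diff \mathcal{S}(u) \circ \mathcal{R}(u) = \mathrm{Id}$:
\[
\mathcal{S}(\tau v + w_{n+1}) = \diff \mathcal{S}(\tau v + w_n)\bigl[(I - S_{N_n})\mathcal{R}(\tau v + w_n)\, \mathcal{S}(\tau v + w_n)\bigr] + Q_n^{\mathrm{quad}},
\]
where $Q_n^{\mathrm{quad}}$ is the standard quadratic Taylor remainder. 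Both terms are quadratically small in the low norm by interpolation: the first via $(I - S_{N_n})$ combined with the high-norm bound on $\mathcal{R} \mathcal{S}$; the second via the tame bound on $\diff^2 \mathcal{S}$ applied to the previous increment. Convergence $w_n(\tau) \to w(\tau)$ in every tame seminorm follows, giving $\mathcal{S}(\tau v + w(\tau)) = 0$ with $w(\tau) = o(\tau)$.

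Finally, smoothness in $\tau$ is obtained by running the analogous iteration on the $\tau$-differentiated equations: each derivative $\partial_\tau^k w$ satisfies a linear equation $\diff \mathcal{S}(u(\tau))[\partial_\tau^k w] = G_k$, with $G_k$ determined by lower $\tau$-derivatives, and solved by $\mathcal{R}(u(\tau))$, exploiting that $\mathcal{R}$ is jointly smooth tame in $(u, \gamma)$. The main obstacle throughout is the careful balancing of the parameters $\chi$, $s_0$, $s_1$, $N_n$ and $\alpha$, so that the quadratic Newton gain $\varepsilon_n \mapsto O(\varepsilon_n^2)$ strictly dominates the polynomial loss $N_n^\alpha$ accrued in the high seminorm at each step; this balance, made feasible by the tame interpolation inequalities that come with the tame Fr\'echet structure, is the standard and only genuinely delicate point of the argument.
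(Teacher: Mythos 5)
This theorem is quoted by the paper from Ambrozio--Marques--Neves (itself a variant of Hamilton's Nash--Moser theorem) without proof, so there is no in-paper argument to compare against; your smoothed Newton iteration, with the quadratic initial residual $\mathcal{S}(\tau v)=O(\tau^2)$ guaranteeing $w(\tau)=o(\tau)$ and the identity $\mathcal{S}_{n+1}=\diff\mathcal{S}_n[(I-S_{N_n})\mathcal{R}_n\mathcal{S}_n]+Q_n^{\mathrm{quad}}$ driving the superlinear convergence, is precisely the standard proof underlying that reference and the outline is correct. The one soft spot is the final smoothness-in-$\tau$ step: because $\diff\mathcal{S}$ has a nontrivial kernel, $\mathcal{R}(u(\tau))[G_k]$ is only \emph{a} solution of the linearized equation and need not coincide with $\partial_\tau^k w$, so smoothness should instead be obtained by propagating $\tau$-derivative estimates through the iterates $w_n(\tau)$ themselves (each smooth in $\tau$ by construction, since $S_{N_n}$, $\mathcal{R}$ and $\mathcal{S}$ are smooth tame) and proving uniform convergence of $\partial_\tau^k w_n$.
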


We apply Theorem \ref{thm:CM}
 to the nonlinear map $ {\mathcal S}  := S $ in \eqref{eq:S} 
defined on the Fr\'echet spaces 
$ {\mathcal F} := C^\infty (\T) \times  C^\infty (\T) $ and 
$ {\mathcal H} := C^\infty_0 (\T) $
and open set
$ {\mathcal U} := \{ u = (a,b) \, : \, \| u \|_6 < \delta  \} $. By Lemma \ref{lem:tameS}
we know that $S  $ is a tame nonlinear map, in the sense of Definition 2.1.1 of \cite{Hamilton}. 

\begin{lem}  \label{adjsmtame}
The map $S :  {\mathcal U}  \subset {\mathcal F} \to {\mathcal H}   $ is smooth tame. 
The map  $ \diff S^*  : \{ {\mathcal U}  \subset {\mathcal F}\} \times {\mathcal H}  
\to {\mathcal F}    $ 
is   smooth tame. 
 \end{lem}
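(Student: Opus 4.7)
The plan is to verify Hamilton's definition of smooth tameness by induction on the order of differentiation, building on the tame estimates already established in Lemmata \ref{lem:tameS}, \ref{lem:tamedS}, \ref{lem:tamedS2}, \ref{dSadj}. Writing $u := (a,b)$ and test vectors $v_j := (\alpha_j, \beta_j)$, the structural claim to establish by induction is: the $n$-th Fr\'echet differential $\diff^n S(u)[v_1,\ldots,v_n]$ is a finite sum of FIOs of adjoint type \eqref{eq:E*App} in which the input $w$ is a product of some $\alpha_j$'s and $\beta_j$'s, the phase is $p(x) = b(x) \pm a(x)$, and the symbol $h(x,k)$ is of the form $k^{n-1}\ell(kA(x))\, e^{\pm i k A_*}$ with $\ell \in \{r, r', r'', \ldots\}$ or their complex conjugates (the building blocks from Lemma \ref{lem:J1}). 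The inductive step is direct differentiation: passing from $\diff^n S$ to $\diff^{n+1} S$, one differentiates either a factor $J_1^{(p)}(kA(x))$—which produces $k\, J_1^{(p+1)}(kA(x))\, \alpha_{n+1}$—or the phase $e^{-ikB(x)}$—which produces $-i k\, \beta_{n+1}\, e^{-ikB(x)}$—so both operations preserve the structural form while increasing the power of $k$ and the derivative order of $\ell$ by one each.

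Once this structural claim is in hand, Lemmata \ref{lem:J1} and \ref{lem:Sobosimbol} imply that $h$ is a symbol of order $\sigma_n := n - 3/2$ satisfying $|h|_{\sigma_n,s} \lesssim_{s,n} 1 + \|a\|_s$ for all $s \geq 0$. Applying Lemma \ref{lem:Estargene} (valid since $\|(a,b)\|_6 < \delta$ gives $\|p\|_{C^{s_0+1}}, \|p\|_{s_0+2} \leq 1$), combined with the algebra-type estimate \eqref{eq:tameproduct} to distribute the norm of the product $w$ onto the individual $v_j$'s, yields, for $s$ larger than a threshold $s_1(n)$, a multilinear tame bound of the form
\[
\|\diff^n S(u)[v_1,\ldots,v_n]\|_s \leq C(s,n) \sum_{j=1}^n \|v_j\|_{s+\mu_n}\!\!\prod_{i \neq j}\!\!\|v_i\|_{s_1(n)} + C(s,n)\|u\|_{s+\mu_n}\prod_{i=1}^n \|v_i\|_{s_1(n)},
\]
with $\mu_n, s_1(n)$ depending only on $n$. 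This is exactly the smooth tame estimate of \cite{Hamilton}, and proves the first statement of the lemma.

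For $\diff S^*$, the argument is parallel, starting from the explicit formula \eqref{eq:dS*}, which realizes $(u,\gamma) \mapsto \diff S(u)^*[\gamma]$ as a sum of FIOs of the direct form \eqref{eq:EApp} with $w = \gamma$, phase $p = b \pm a$, and symbols involving $r(kA(x))$ of order $-1/2$. Joint differentiation in $(u, \gamma)$ preserves the FIO structure: each derivative in $a$ or $b$ brings down one factor $k$ and either shifts $r^{(m)} \mapsto r^{(m+1)}$ or multiplies by one of $\alpha, \beta, \gamma$, and $\gamma$-differentiation is linear so it is immediate. The symbol order grows linearly in the number of $(u,\gamma)$-derivatives while the decay $\langle\theta\rangle^{-1/2}$ of $r$ and all its derivatives (Lemma \ref{lem:J1}) is preserved. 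Lemma \ref{lem:Eusmall} then yields the multilinear tame estimates analogous to those above.

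The main point is structural bookkeeping rather than new analytic content: all the essential ingredients—the FIO tame estimates of Lemmata \ref{lem:Eusmall}, \ref{lem:Estargene}, the Bessel decay Lemma \ref{lem:J1}, the symbol estimate Lemma \ref{lem:Sobosimbol}, and the algebra-type inequality \eqref{eq:tameproduct}—have already been assembled in Sections \ref{sec:func}--\ref{sec:diffS}. The only point requiring care is to track that the symbol order $\sigma_n$ and the loss of derivatives $\mu_n$ both grow at most linearly in $n$, so that Hamilton's definition, which permits a finite loss of derivatives at each order, is indeed verified.
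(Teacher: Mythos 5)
Your proposal is correct and follows essentially the same route as the paper: identify $\diff^m S$ as a finite sum of adjoint-type FIOs \eqref{eq:E*App} with symbols $k^{m-1}\ell(kA(x))e^{\pm ikA_*}$ of order $m-\tfrac32$ (via Lemmata \ref{lem:J1} and \ref{lem:Sobosimbol}), apply Lemma \ref{lem:Estargene}, and treat $\diff S^*$ in parallel via \eqref{eq:EApp} and Lemma \ref{lem:Eusmall}. The only cosmetic difference is that the paper writes down the closed formula \eqref{eq:coefficientsdS^m} for $\diff^m S$ directly, whereas you track the same structure by induction.
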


\begin{proof}
Let us  prove that for any $ m \in \N $ the derivative $ \diff^m S $
is tame, i.e. there is $ \sigma (m) $ such that, for any $ s \geq 5/2 $, 
\be\label{defSmtame}
\begin{aligned}
 \big\| \diff^m S(a,b)&[(\alpha_1,\beta_1), \ldots, (\alpha_m,\beta_m)] \big\|_{s} 
 \\
& \lesssim_s
\| (a, b)\|_{s + \sigma(m)} 
\prod_{i=1}^m 
\| (\alpha_i, \beta_i)\|_{s_0 + \sigma(m)} + 
\sum_{j=1}^m \| (\alpha_j, \beta_j)\|_{s + \sigma(m)} \prod_{i \neq j}
\| (\alpha_i, \beta_i)\|_{s_0 + \sigma(m)} \, . 
\end{aligned}
\ee
For $ m = 1, 2 $ the bounds \eqref{defSmtame} 
follow by Lemmata \ref{lem:tamedS} and \ref{lem:tamedS2}. 
For $ I \subset \{ 1, \ldots, m \}  $ we denote by $ |I| $ its cardinality,
 $ |\varnothing | = 0 $ 
 and $ J_1^{(0)} \equiv J_1  $. 
For any $ m \in \N $ it results (by differentiating \eqref{diffSab}) 
\begin{align}
&  \diff^m S(a,b)[(\alpha_1,\beta_1), \ldots, (\alpha_m,\beta_m)] 
 \label{eq:coefficientsdS^m}  \\
 & =    \sum_{k \neq 0}
 \sum_{I\subset\{1,\ldots,m\}} 
(-i)^{|I^c|}   k^{m-1} \Big(  \int_\T    J_1^{(|I|)} ( k A(x))  \Pi_{i_1 \in I}\alpha_{i_1} (x) 
\Pi_{i_2 \in I^c} \beta_{i_2} (x)   e^{- i k B(x)}  \, \diff x \Big) e_k \, . \nonumber 
\end{align}
From \eqref{eq:coefficientsdS^m} we have that 
$\diff^m S(a,b)$ is sum of operators of the form $E^*$ in \eqref{eq:E*App} with 
$u =  \Pi_{i_1 \in I}\alpha_{i_1} 
\Pi_{i_2 \in I^c} \beta_{i_2 } $ 
and symbols $h$ as in \eqref{eq:symbolh} of order $\sigma= m - \frac32 $ 
 by Lemmata \ref{lem:J1} and \ref{lem:Sobosimbol} satisfying
 $ | h |_{m - \frac32,s} \lesssim_{s,m} 1 + \| a \|_s $ for any $ s \geq 0 $. Therefore, Lemma \ref{lem:Estargene} with $ p(x) = a(x) \pm b(x) $ implies that  $\diff^m S $ satisfies \eqref{defSmtame} with $\sigma (m) = m - \frac12 $. 
  Similarly, arguing as in Lemma \ref{dSadj}, by  differentiating \eqref{eq:dS*},  
  we deduce that $ \diff S^*   $ is  smooth tame. 
\end{proof}

We know from Theorem~\ref{thm:ri} that, for any $(a,b)\in \mathcal U$, $\diff S(a,b)$ has a right inverse $R(a,b)$ as in \eqref{Rab} which satisfies tame estimates \eqref{Rabfinal}.
In order to prove that $ R(a,b)$  is smooth tame 
we use  Lemma \ref{adjsmtame} and that
$ (\diff S (a,b)\circ \diff S(a,b)^*)^{-1}  $ is smooth tame. This in turn follows because 
$ \diff S (a,b)\circ \diff S(a,b)^* $ is smooth tame, 
being composition of smooth tame maps, see \cite[Part II, Thm 2.1.6]{Hamilton}.
Moreover $ \diff S (a,b)\circ \diff S(a,b)^* $   is invertible and its inverse is tame by 
Proposition \ref{prop:key}. Finally
 Theorem \cite[Part II.3, Thm 3.1.1]{Hamilton} implies $ (\diff S (a,b)\circ \diff S(a,b)^*)^{-1}  $ is smooth tame.  

All the assumptions of Theorem \ref{thm:CM} are thus satisfied by $ S(a,b) $.
Since Lemma \ref{lem:dStrivial}  shows that 
$ \ker \diff {\mathcal S} (0) $ is equal to the subspace $ V_{A_*} \subset {\mathcal F } $ 
 defined in 
\eqref{SDABstar}, 
it implies  Theorem \ref{thm:main}.

\bibliographystyle{plain}
\bibliography{_biblio}

\end{document}